\documentclass[a4paper,12pt]{amsart}

\def\myfnt{\ifx\protect\@typeset@protect\expandafter\footnote\else\expandafter\@gobble\fi}
\makeatother

\usepackage{amsfonts}
 \usepackage{amssymb}
 \usepackage{amsmath,amsxtra,amsthm}

\usepackage{fancyhdr}
\usepackage[usenames]{color}

\usepackage[mathscr]{eucal}

 \usepackage{dsfont}

\usepackage{hyperref}

\usepackage[utf8]{inputenc}
\usepackage{wrapfig}

\usepackage[all,cmtip]{xy}
%
%


\usepackage{shuffle}
\usepackage{scalerel}[2016/12/29]



\newtheorem{theorem}{Theorem}[section]

\newtheorem{deff}[theorem]{Definition}
\newtheorem{example}[theorem]{Example}
\newtheorem{lemma}[theorem]{Lemma}
\newtheorem{cor}[theorem]{Corollary}
\newtheorem{prop}[theorem]{Proposition}
\newtheorem{rem}[theorem]{Remark}



\setlength{\textwidth}{160.0mm}
     \setlength{\textheight}{225.0mm}
     \setlength{\oddsidemargin}{0in}
    \setlength{\evensidemargin}{0in}
    \setlength{\topmargin}{-1cm}
    \setlength{\parindent}{5.0mm}


\newcommand{\bqa}{\begin{eqnarray}}
\newcommand\eqa {\end{eqnarray}}
\newcommand{\beq}{\begin{eqnarray}}
\newcommand{\beqn}{\begin{eqnarray}\nonumber}
\newcommand{\eeq}{\end{eqnarray}}
\newcommand{\be}{\begin{array}}
\newcommand{\ee}{\end{array}}

 \newcommand{\pr}{\partial}
 \newcommand{\pt}{\partial}

   \newcommand\vf\varphi

 \newcommand{\Id}{\mathrm{Id}}





 \newcommand{\cO}{{\mathcal{O}}}
  \newcommand{\cK}{{\mathcal{K}}}

 \newcommand{\cI}{{\mathcal I}}

 \newcommand{\cu}{\mathpzc{u}}
 \newcommand{\cv}{\mathpzc{v}}

 \newcommand{\C}{{\mathbb C}}
 
 \newcommand{\R}{{\mathbb R}}
 \newcommand{\Z}{{\mathbb Z}}
 \newcommand{\Q}{{\mathbb Q}}
 
 \newcommand{\N}{{\mathbb N}}




  \newcommand{\noi}{{\vskip 2mm\noindent}}

  \newcommand{\fX}{{\mathfrak{X}}}



   \def\a{\alpha}

   \def\la{\lambda}

   \def\de{\delta}

  \def\sst{\scriptscriptstyle}

 \def\bk{\mathds{k}}

\DeclareFontFamily{OT1}{pzc}{}
\DeclareFontShape{OT1}{pzc}{m}{it}{<-> s * [1.15] pzcmi7t}{}
\DeclareMathAlphabet{\mathpzc}{OT1}{pzc}{m}{it}


\setlength\parindent{0pt}

\begin{document}

\bibliographystyle{amsplain}


\title[Normal forms of $\Z$Q-manifolds]{Normal forms of $\Z$-graded $Q$-manifolds}
\author[A. Kotov]{Alexei Kotov}
\address{Alexei Kotov: Faculty of Science, University of Hradec Kralove, Rokitanskeho 62, Hradec Kralove
50003, Czech Republic}
\email{oleksii.kotov@uhk.cz}

\author[C. Laurent-Gengoux]{Camille~Laurent-Gengoux}
\address{Camille~Laurent-Gengoux: Institut Elie Cartan de Lorraine (IECL), UMR 7502 --  3 rue Augustin Fresnel, 57000 Technop\^ole Metz, France}
\email{camille.laurent-gengoux@univ-lorraine.fr}

\author[V. Salnikov]{Vladimir Salnikov}
\address{Vladimir Salnikov: LaSIE  -- CNRS \&  La Rochelle University,
Av. Michel Cr\'epeau, 17042 La Rochelle Cedex 1, France}
\email{vladimir.salnikov@univ-lr.fr}


\keywords{Graded manifolds, Q-structures} 
\begin{abstract} 
Following recent results of A.K. and V.S. on $\mathbb Z$-graded manifolds, we give several local and global normal forms results for $Q$-structures on those, i.e. for differential graded manifolds. In particular, we explain in which sense their relevant structures are concentrated along the zero-locus of their curvatures, especially when the negative-part is of Koszul-Tate type. We also give a local splitting theorem.

\end{abstract}

\keywords{$\Z$-graded manifolds, dg-manifolds, $\Q$-structures, Lie $\infty$-algebroids, normal forms, splitting theorems}

\maketitle


\renewcommand{\theequation}{\thesection.\arabic{equation}}

\vspace{-1em}
\section*{Introduction}\label{sec:introduction}

\noi
This article is the sequel of \cite{AKVS}, that studied normal forms of $\Z$-graded manifolds and where the analogue of the Batchelor's theorem has been proven. We now equip a $\Z$-graded manifold with a degree $+1 $ self-commuting vector field $Q$, thus making it a differential graded (DG) manifold, also called $Q$-manifold. 
The purpose of this paper is to provide several normal form type results in this setting.

The paper is organized as follows. 
In Section \ref{sec:prel}, we give some precise definitions and fix some usual notations related to graded manifolds. Then we proceed with the description of projective systems of algebras (recapitulated in Appendix \ref{sec:projectiveLimits}), which we specialize to the $\Z$-graded structure sheaves. Section \ref{sec:withcurvature} is devoted to the idea that ``outside the zero locus of their curvatures, ($\mathbb Z^*$-graded) $Q$-manifolds can be made  trivial''.  A more precise statement is that on any open subset where the curvature is different from zero at all points, the dual $\mathbb Z^*  $-graded Lie $\infty$-algebroid can be chosen to have all $k$-ary bracket equal to zero, except for the $0$-ary bracket, given by the nowhere vanishing curvature. 

In section \ref{sec:zerocurv} we 
first recall the standard notion of Koszul--Tate resolution, which are examples of negatively graded $Q$-manifolds. Then we construct two structures on the zero locus $\{\kappa=0\} $ of a $Q$-manifold that are independent of a choice of a splitting:
a positively graded $Q$-structure on the zero locus $\{\kappa=0\} $ and a negatively graded $Q$-manifold.
We eventually show that $Q$-manifolds whose negative part is of Koszul-Tate type are entirely encoded by this positively graded $Q$-structure on the zero locus. 

Last, in section \ref{sec:curv-vanish} we choose a point in the zero locus, (on which leaves of the anchor map are well defined) and give a splitting theorem: near a leaf $L$ in the zero locus, a $Q$-manifold is the direct product of the standard $T[1]L$ and a transverse $Q$-manifold.
In the process, we also give some counter-examples to ``naive beliefs'' about the anchor maps of a $Q$-manifold. We conclude by mentioning some perspectives and potential applications.

\newpage

\section{Notations and preliminaries} \label{sec:prel}

\subsection{$\Z^{*}$-graded manifolds}
Let us give (recall) the definition of $\Z^*$-graded manifolds.
We start with an important definition: a filtration which is used throughout the paper. 

\begin{deff}
\label{def:filtration}
Let $\cO = \oplus_{j\in \Z} \cO_j$ be a $\mathbb Z$-graded commutative algebra.
We call \emph{negative filtration} the filtration $$ \cO = F^0\cO \supset \cI_{-}=F^1\cO \supset \dots \supset  F^i \cO \supset \dots .$$
defined by $F^i \cO = \mathcal I^{-i}_-$ for all $i \in \Z_{\geq 0} $, where $\mathcal I^{-i}_-$ is generated by $\oplus_{j \leq -i} \cO_j$.
\end{deff} 

\noindent
This filtration allows to define the graded manifolds we are interested in.

\begin{deff}[\cite{AKVS}]
\label{def:gradedManifold} 

A \emph{$\Z^*$-graded manifold} is a pair $M=(M_0,\cO)$, where $M_0$ is a smooth manifold (referred to as \emph{base manifold}) and $ \cO= \oplus_{i\in \Z} \cO_i$ is a sheaf of $\Z$-graded commutative algebras (whose sections are referred to as \emph{functions}), such that each point of $M_0$ has a neighborhood $U \subset M_0$ over which $\mathcal O(U) $ is isomorphic to $ \Gamma(\tilde{S}(\oplus_{i \in \Z^*} V_i) )$, where each $V_i$ is a graded vector bundle of degree $i$, and ``$ \tilde  S $'' stands for the completion of $\Gamma(S(\oplus_{i \in \Z^*} V_i))$ with respect to 
the negative filtration.   
\end{deff}
\begin{rem} \label{rem:kakto}
\normalfont
This definition is explained in details in \cite{AKVS}: for this paper to be self-consistent and for further use, we recollect some necessary facts about filtrations and their completions in what follows and in Appendix \ref{sec:projectiveLimits}.

Note that Definition \ref{def:gradedManifold} potentially allows a function to be a sum of infinitely many terms, 
which is also explained in \cite{AKVS}.
$\square$ \end{rem}

\begin{rem} \normalfont \label{rem:Z-star}
We write ``$\Z^*$-graded'' instead of ``$\Z$-graded'' to insist on the natural assumptions that there are no generators of $\cO$ of degree $0$ which is not a coordinate function on $M_0$. 
For instance, Kapranov dg-manifolds \cite{LSX0,LSX1} are not $\Z^*$-graded manifolds, the difference is in conventions though.

For $\Z^*$-graded manifolds, in contrast to the $\Z_{\geq 1}$-graded or $(-\Z_{\geq 1})$-graded cases, we do not have an isomorphism $C^\infty(M_0)  \simeq \cO_0 $.  For instance, the product of a function in $\cO_p$ with a function in $\cO_{-p}$ may very well produce a non-zero function: it then belongs to $\cO_0$ but can not be considered as an element in $C^\infty(M_0)$.
There is even no canonical inclusion $C^\infty(M_0) \hookrightarrow \cO_0  $, but there is a natural projection $\cO_0 \rightarrow C^\infty(M_0)$, which corresponds to the inclusion $M_0\hookrightarrow M$.
$\square$ \end{rem}
 
\begin{rem}
\normalfont
A $\Z^*$-graded manifold is complete with respect to the topology on $\mathcal O $ given by the negative filtration, see \cite{AKVS}.
$\square$ \end{rem}

\begin{rem} \label{rem:Fi}
\normalfont The negative filtration of $\cO$ is compatible with the negative filtration of the symmetric algebras that appear in Definition \ref{def:gradedManifold}.
Notice that elements in $F^i \cO$ may be of any degree, although its generators have degree less or equal to $-i$. 
Also, notice that $\cap_{i \geq 0} F^i \cO = \{0\}  $.
$\square$ \end{rem} 
 
According to \cite{AKVS}, there are natural sheaves of graded ideals 
 in $\cO $:
\begin{enumerate}
    \item the ideal $\mathcal I_+ $ generated by $\oplus_{i \geq 1} \cO_i $. 
    \item the ideal  $\mathcal I_- =F^1 \cO $ generated by $\oplus_{i \leq -1} \cO_i $. 
    \item  The ideal $\mathcal I =\mathcal I_+ +\mathcal I_- $. 
\end{enumerate}

Let us consider the quotient of $\cO $ by these three ideals:
\begin{enumerate}
\item 
 The quotient $(M_0, \cO/\mathcal I_+)$ is a graded manifold  
 with grading now ranging from $0$ to $-\infty$ that we call the \emph{negative part of $(M_0,\mathcal O) $}. 
 \item 
 The quotient $(M_0, \cO/\mathcal I_-)$ is a graded manifold  
 with grading now ranging from $0$ to $+\infty$ that we call the \emph{positive part of $(M_0,\mathcal O) $}. 
 \item The quotient $ (M_0,\cO/\mathcal I)$ is simply the smooth manifold $M_0$ with its sheaf $C^{\infty}(M_0)$ of smooth functions (and, in particular, is concentrated in degree $0$).
\end{enumerate}

For $i \geq 0$  and $ j \in \mathbb Z$, we denote by $(F^i \cO)_j  $ elements of $F^i \cO  $ of degree $j$.
Then, to a graded manifold $M=(M_0,\cO)$, one can associate (canonically)  a family $(E_i)_{i \in \Z^*} $ of vector bundles over $M_0$, as follows. The quotient space
\begin{equation}  \label{eq:truc}
\frac{\mathcal I}{\mathcal I^2} = \bigoplus_{i \in \Z^*}  \left(\frac{\mathcal I}{ \mathcal I^2}\right)_{i}
\end{equation}
is a direct sum of projective $C^\infty(M_0)$-modules, hence by Serre--Swan theorem, there exists for all $i \in \Z^* $ a vector bundle $E_{i}$ such  that $\Gamma(E^*)_{i} \simeq   (\mathcal I)_{i} / (\mathcal I^2)_{i} $. 
We call $E_\bullet :=\oplus_{i \in \Z^*} E_i $ the \emph{canonical graded vector bundle of $(M_0,\cO)$}.

\begin{theorem}
[\emph{Batchelor's theorem}, \cite{AKVS} -- Sections 3.3 and 4.2]
Let $(M_0,\cO)$ be a $\mathbb Z^*$-graded
 manifold with canonical $\mathbb Z^*$-graded bundle $ E_\bullet$. There exists an isomorphism of  sheaves (called \emph{splitting}
 ):
$$
\cO \simeq \Gamma \left(\tilde{S} (\oplus_{i\in \Z^*}E^*_i) \right).
$$
Here, $\tilde{S}$ again refers to the completion of $\Gamma\left (S(\oplus_{i\in \Z^*}E^*_i)\right)$ with respect to the its negative filtration
as in Definition \ref{def:filtration}.
\end{theorem}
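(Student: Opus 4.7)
The plan is to produce a global splitting by gluing the local models guaranteed by Definition \ref{def:gradedManifold}, after first identifying them intrinsically in terms of the canonical bundle $E_\bullet$.

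The first step is to relate the local bundles $V_i$ to $E_i^*$. Inspecting the local isomorphism $\mathcal{O}(U) \simeq \Gamma(\tilde{S}(\oplus_{i \in \Z^*} V_i))$ and computing $\mathcal{I}/\mathcal{I}^2$ in degree $i$ yields $\Gamma(V_i)$, so $V_i \simeq E_i^*|_U$ canonically. Hence each local trivialization can be adjusted to a splitting $\phi_\alpha \colon \mathcal{O}|_{U_\alpha} \xrightarrow{\sim} \Gamma(\tilde{S}(\oplus_{i \in \Z^*} E_i^*))|_{U_\alpha}$ that lifts the canonical identification on $\mathcal{I}/\mathcal{I}^2$.

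The second step is to glue the $\phi_\alpha$ by an order-by-order induction. The transitions $g_{\alpha \beta} := \phi_\alpha \circ \phi_\beta^{-1}$ are degree-preserving automorphisms of $\Gamma(\tilde{S}(\oplus E_i^*))$ acting as the identity modulo $\mathcal{I}^2$. I would filter such ``unipotent'' automorphisms by a bifiltration combining the negative filtration $F^\bullet$ with the symmetric-algebra degree. Assuming the splittings agree up to level $n$ of this bifiltration, the discrepancy is a \v{C}ech $1$-cocycle valued in a sheaf of sections of a vector bundle over $M_0$ (a suitable $\Hom$ between graded components), which is a fine sheaf. A partition of unity on $M_0$ then splits the cocycle, and composing each $\phi_\alpha$ with the exponential of the corresponding $0$-cochain produces agreement up to level $n + 1$. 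The successive corrections compose in the completed topology, and convergence is guaranteed by $\bigcap_i F^i \mathcal{O} = \{0\}$ (Remark \ref{rem:Fi}) together with completeness of $\mathcal{O}$ with respect to the negative filtration.

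The main obstacle is setting up the bifiltration so that each graded piece is a finitely generated $C^\infty(M_0)$-module — hence a fine sheaf — while simultaneously controlling the positive generators (which behave as in the classical $N$-manifold Batchelor theorem: only finitely many symmetric powers contribute to a given total degree) and the negative generators (which are responsible for the formal completion $\tilde{S}$). Once the bifiltration is properly arranged, the inductive step itself is routine and the limit produces the desired sheaf isomorphism $\mathcal{O} \simeq \Gamma(\tilde{S}(\oplus_{i \in \Z^*} E_i^*))$.
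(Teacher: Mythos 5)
This theorem is stated in the paper as an imported result --- it is attributed to \cite{AKVS}, Sections 3.3 and 4.2, and no proof is given here --- so there is no in-paper argument to compare yours against line by line. That said, your outline is the standard soft-sheaf proof of Batchelor-type theorems and is essentially the argument of the cited reference: identify the local generator bundles with the canonical ones via $\mathcal I/\mathcal I^2$, observe that transition automorphisms are unipotent with respect to a filtration, kill the leading-order \v{C}ech $1$-cocycle at each stage using a partition of unity (the obstruction sheaves are fine, being sheaves of sections of vector bundles over $M_0$), and pass to the limit using completeness and $\bigcap_i F^i\cO=\{0\}$. Two small points of care. First, with the paper's labelling conventions the degree-$i$ part of $\mathcal I/\mathcal I^2$ is $\Gamma(E_{-i}^*)$, not $\Gamma(E_i^*)$ (Remark \ref{rem:indexbeforestar}); this is only a relabelling and disappears in the direct sum over $\Z^*$. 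Second, in the $\Z^*$-graded case there is no canonical inclusion $C^\infty(M_0)\hookrightarrow\cO_0$ (Remark \ref{rem:Z-star}), so the transitions $g_{\alpha\beta}$ may also move the base coordinates by elements of $(\mathcal I^2)_0$; your obstruction sheaf must therefore include a $TM_0$-valued component, not only $\Hom$'s between the $E_i^*$. Your bifiltration by negative degree and polynomial degree handles the finiteness issue correctly: for a fixed total degree and a fixed level of the negative filtration, the positive degree and hence the arity of contributing monomials is bounded, so each graded piece of the obstruction is a finitely generated projective $C^\infty(M_0)$-module and the partition-of-unity step applies.
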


\begin{rem} \label{rem:indexbeforestar}
\normalfont
Notice that for every splitting, sections of $E_{-i}^* \equiv (E_{-i})^* = (E^*)_{i} $ become functions of degree $+i$ in $\cO$.
$\square$ \end{rem}

\begin{rem}
\normalfont
Although Batchelor's theorem claims that splitting exists, there is no canonical splitting in general. In contrast, the vector bundles $(E_i)_{i \in \mathbb Z^*} $ defined above are canonical.
$\square$ \end{rem}

Once a splitting is chosen,
many different notions of ``degree'' can be defined, beside the degree that $\cO $ is equipped with by definition. 
More precisely, for a section 
 $\alpha \in \Gamma(E^*)_i $, let us define three different degrees as follows:
 $$ {\mathrm{deg}}(\alpha) = i ,\hspace{.2cm}
  {\mathrm{pol}}(\alpha) = 1 , \hspace{.2cm} {\mathrm{deg}}_+(\alpha)= \left\{ \begin{tabular}{rr}
 i&\hbox{ for $i\geq 1$} \\ 0& \hbox{ otherwise,} \\
 \end{tabular}\right. \hspace{.2cm}, \hspace{.2cm} 
 {\mathrm{deg}}_-(\alpha)=
  \left\{ \begin{tabular}{rr}
 -i& \hbox{ for $i\leq 1$}\\ 0 & \hbox{ otherwise.} \\
 \end{tabular}\right.
 $$
  Then these degrees extend by multiplicativity to $\Gamma(S(\oplus_{i \in \mathbb Z^*} E_{i}^*)) $.
To avoid confusion, the degree  ${\mathrm{deg}}$ will be called the \emph{total degree}, sometimes referred to as the \emph{ghost degree}.
It coincides with the degree that $\mathcal O$ is initially equipped with. This degree is responsible\footnote{We make this assumption for simplicity of the presentation in this paper, but the constructions work for a more general convention on the relation of the total degree and the (super) parity.} for all the commutation relations, i.e. the Koszul sign rule is defined by its reduction modulo $2$. 
 The degree  ${\mathrm{deg}}_-$ (resp. ${\mathrm{deg}}_+$) is called the \emph{negative degree} (resp. \emph{positive degree}) and plays an important role. Also,
 $${\mathrm{deg}}={\mathrm{deg}}_+ - {\mathrm{deg}}_-. $$
 Last, ${\mathrm{pol}}$ is the  \emph{polynomial degree} (sometimes referred to as \emph{arity}) that counts the number of sections in a product.

\begin{example}\normalfont
Concretely, for a section of $E_{-5}^* \odot
 E_{4}^* \odot E_{7}^* $ 
\begin{enumerate}
    \item[$\circ$] the total degree or ghost degree is $5-4-7=-6$;
    \item[$\circ$] the negative  degree is $4+7= +11 $; 
    \item[$\circ$] the positive degree is $+5 $;
    \item[$\circ$] the polynomial degree is $3$ (it is the product of three sections).
\end{enumerate}
\end{example}

\begin{rem}\normalfont
 The negative degree is compatible with the filtration $F^i\cO$ introduced above in the sense that
  $ F^i\cO  =\{ F \in \cO \, | \, {\mathrm{deg}}_-(F) \geq i\} $. 
$\square$ \end{rem}

\subsection{$Q$-manifolds} 
Let us now define $Q$-manifolds, that is equip a $\Z^*$-graded manifold with a differential structure.

\begin{deff}
A {vector field of degree $k$ on a $\Z^*$-graded manifold} $(M,\cO)$ is a degree $k$ derivation of $ \mathcal O$. 
\end{deff}

Vector fields of degree $k$ shall be denoted as $\mathfrak X_k(\cO)$.
The graded vector space of all vector fields:
 $$ \mathfrak X_\bullet (\cO) = \bigoplus\limits_{k \in \mathbb Z}  \mathfrak X_k (\cO), $$
form a graded Lie algebra when equipped with the graded commutator $[\cdot, \cdot] $.

\begin{deff}
A $\Z^*$-graded $Q$-manifold is a triple $(M_0,\cO,Q) $, with $M=(M_0,\cO)$ a $\Z^*$-graded manifold  and $Q$ a degree $+1$ vector field which satisfies $[Q,Q]=0$.
\end{deff}

Since the degree of $Q $ is $+1$, we have $Q[\mathcal I_+ ] \subset \mathcal I_+ $, so that $Q$ induces a degree $+1$ derivation $ Q^-$ of the quotient $ \cO/\mathcal I_+$ which is 
by definition 
the sheaf of functions of the negative part of $(M_0,\cO)$. This allows the following definition.

\begin{deff}
\label{def:derivedpart}
We call the $Q$-manifold $(M_0,\cO/\mathcal I_+,Q^-)$
the \emph{negative part} of the $Q$-manifold $(M_0,\cO,Q) $.
\end{deff}

\begin{rem}
\normalfont
The vector field $Q^-$ is $C^\infty(M_0)$-linear, i.e. it is a vertical vector field. 
$\square$ \end{rem}

\subsection{An algebraic generalization: $Q$-varieties over a commutative algebra} \;

Let $\mathcal A $ be a  unital commutative algebra (that may be thought as functions over an affine variety $X_0 $ for instance).
Definition \ref{def:gradedManifold} admits a generalization: 
a differential graded commutative algebra $\mathcal O $
such that there exist    
finitely generated projective $\mathcal A$-modules $(\mathcal V_i)_{i \in \mathbb N} $ and a graded algebra isomorphism:
 $$ \mathcal O \simeq S_{\mathcal A} (\oplus_{i \geq 1} \mathcal V_i). 
 $$

In particular, the following object will be important.

\begin{deff}
\label{def:positivelyGradedQVariety}
Let $I \subset C^\infty(M_0)$ be an ideal. A positively graded variety (resp. $Q$-variety) over $C^\infty(M_0)/I $ is a positively  graded commutative algebra $\mathcal K_+ $ (resp. positively  graded commutative differential algebra $(\mathcal K_+,Q_+)$) that admits a splitting, i.e. an isomorphism
$$\mathcal K_+ \simeq \Gamma_I(S(\oplus_{i \geq 1} E_{-i}^*)) $$
for a family of vector bundles $(E_{-i})_{i \geq 1} $ over $M_0$. 
Here for any vector bundle $E \to M_0$, 
$$\Gamma_I(E) :=
\Gamma(E) \otimes_{C^\infty(M_0)} C^\infty(M_0)/I.  $$

\end{deff}

\begin{rem}
\normalfont
    There is no need to take completions in the definition above since every function of a given  degree is necessarily polynomial with respect to non-zero degree variables.
\end{rem}
 
\subsection{Duality $Q$-manifolds $\sim$ Lie $\infty$-algebroids}
\label{sec:duality}

 Let $(M_0,\cO,Q) $ be a $Q$-manifold. 
 Once a splitting $\cO \simeq \Gamma(\oplus_{i \in \Z} E_{i}^*) $ is given, $Q $ can be dualized to a Lie $\infty $-algebroid, defined as follows.

 \begin{deff}
\cite{Q-voronov,GBNP}
 A $\Z^*$-graded Lie $\infty $-algebroid of a $\mathbb Z^*$-graded vector bundle is the data of:
 \begin{enumerate}
 
     \item[$\circ$] families indexed by $n \geq 1$ of vector bundle morphisms
      $$ \rho_n\colon S^n( \oplus_{i \in \Z^*} E_{i})_{-1} \longrightarrow TM_0$$
      called $n$-anchor maps,
     \item[$\circ$] families of degree $+1$ maps:
      $$ \ell_n \colon S_\mathbb R^n \left(\Gamma(  \oplus_{i \in \Z^*} E_{i})\right)_k  \longrightarrow \Gamma(E_{k+1})   $$
      called $n$-bracket,
 \end{enumerate}
together with a section $\kappa \in \Gamma(E_{+1}) $ called \emph{curvature} that satisfy the 
 higher Jacobi and higher Leibniz identities (see e.g. \cite{ryvkin-book}). 
 \end{deff}

\begin{rem} \normalfont
 It is not easy to attach a single name to the following proposition, based on a observation by Pavol Ševera \cite{severa}, spelled out in the negative degree case in \cite{Bonavolonta:1304.0394}, and which can be proven using Theodore Voronov's derived brackets in \cite{Q-voronov}. 
$\square$ \end{rem}

 \begin{prop}
 There is a one-to-one correspondence between $\Z^*$-graded Lie $\infty$-algebroids structures on $\oplus_{i \in \Z}E_i \to M_0 $ and $Q$-manifolds structures with sheaf of functions $\Gamma(\tilde{S} (\oplus_{i \in \Z^*}E_i^*) )$. 
 \end{prop}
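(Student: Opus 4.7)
The plan is to unpack the degree $+1$ vector field $Q$ along the polynomial-degree decomposition provided by a Batchelor splitting, and then to match its components with the structural maps $(\kappa,\ell_n,\rho_n)$ of a Lie $\infty$-algebroid, using Voronov's derived-bracket argument to translate $[Q,Q]=0$ into the higher Jacobi/Leibniz identities.

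First, I would fix a splitting $\cO \simeq \Gamma(\tilde S(\oplus_{i \in \Z^*} E_i^*))$, which exists by Batchelor's theorem. A degree $+1$ derivation is determined by its restriction to generators, i.e.\ to $\Gamma(E_i^*)$ and to $C^\infty(M_0)$. I would write this restriction as
\[
Q|_{\Gamma(E_i^*) \oplus C^\infty(M_0)} = \sum_{n \geq 0} Q^{(n)},
\]
where $Q^{(n)}$ is the component of polynomial degree $n$ in the target. The sum is a priori infinite, but for each fixed $i$ each term $Q^{(n)}$ lands in a finite piece of the negative filtration (forced by the fact that the target has fixed total degree), so the series converges in the topology induced by the negative filtration and extends by the graded Leibniz rule to a unique derivation $Q$ on all of $\cO$. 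Dually, using the identification $E_{-i}^* \cong (E^*)_i$ of Remark \ref{rem:indexbeforestar}, the pieces $Q^{(n)}$ on generators correspond to $\R$-multilinear graded-symmetric maps
\[
\ell_n : S^n_\R(\Gamma(\oplus_i E_i))_k \longrightarrow \Gamma(E_{k+1}),
\qquad n \geq 1,
\]
together with anchor components $\rho_n : S^n(\oplus E_i)_{-1} \to TM_0$ arising from the part of $Q^{(n)}$ that acts on $C^\infty(M_0)$. The degree count works exactly as in the standard correspondence after noting that dualizing shifts $E_i^*$ of degree $i$ to $E_{-i}$ of degree $-i$.

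The genuinely new feature is the $n=0$ term: because we allow generators of positive degree, $Q^{(0)}$ on $\Gamma(E_{-1}^*) \subset \cO_{+1}$ is a function of total degree $+1$ and polynomial degree $0$, i.e.\ an element of $C^\infty(M_0)$-linearity; dually this gives the curvature $\kappa \in \Gamma(E_{+1})$. (In the purely $\Z_{\geq 1}$-graded case there is no room for such a term, which is why $\kappa$ does not usually appear.) One checks that $\kappa$, the $\ell_n$ and the $\rho_n$ are intrinsically associated with $Q$ and the splitting.

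Next I would expand $[Q,Q]=0$. Since $[Q,Q]$ is again a derivation, it vanishes iff it vanishes on generators, and the polynomial-degree $m$ part of $[Q,Q]$ on a generator computes exactly the Voronov derived-bracket expression involving $Q^{(a)}$ and $Q^{(b)}$ with $a+b = m+1$. Translated through the above dictionary these are precisely the higher Jacobi identities of the $\ell_n$ (including the compatibility of $\ell_n(\cdot,\kappa,\dots)$ terms that encode the ``flatness'' conditions on $\kappa$) together with the higher Leibniz compatibilities between the $\ell_n$ and $\rho_n$. Here I would simply invoke \cite{Q-voronov,severa,Bonavolonta:1304.0394} for the computation, adapted term by term to include the $n=0$ contribution. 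Conversely, starting from $(\kappa,\ell_n,\rho_n)$ satisfying these identities, one defines $Q^{(n)}$ on generators by the dual formulas, sums them in the filtration topology, extends by Leibniz, and verifies $[Q,Q]=0$ by reversing the same computation; uniqueness is automatic since $Q$ is determined by its action on generators.

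The main obstacle I anticipate is book-keeping rather than conceptual: one must verify that the series defining $Q$ (and the series appearing when expanding $[Q,Q]$ on a generator) converges in the completed symmetric algebra, so that rearrangement by polynomial degree is legitimate, and that the $n=0$ curvature contribution is correctly folded into the derived-bracket identities (in particular, the condition $\ell_1(\kappa) + \tfrac12 \ell_2(\kappa,\kappa) + \dots = 0$ that expresses $Q^2 = 0$ on a generator of degree $-1$). Once the negative filtration is used systematically, as set up in Section \ref{sec:prel} and Appendix \ref{sec:projectiveLimits}, this is a routine check.
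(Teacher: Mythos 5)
Your proposal is correct and follows essentially the route the paper itself indicates: the paper gives no independent proof of this proposition but cites Ševera's observation and Voronov's derived brackets, which is exactly the splitting/polynomial-degree dictionary you spell out, including the key point that the $\Z^*$-grading forces the extra polynomial-degree-lowering component dual to the curvature $\kappa\in\Gamma(E_{+1})$ and that convergence is handled by the negative filtration on $\tilde S$. One small index slip: the curvature arises from the action of $Q$ on the degree $-1$ generators $\Gamma(E_{+1}^*)$ (whose image has a component in $C^\infty(M_0)$), not on $\Gamma(E_{-1}^*)\subset\cO_{+1}$; your conclusion $\kappa\in\Gamma(E_{+1})$ is nevertheless the right one.
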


\subsection{Projective systems associated to graded manifolds}
\label{sec:exp}
In this section, we give a precise sense to the notion of the flow of a degree $0$ vector field on a graded manifold. For the standard definitions of projective systems the reader is referred to Appendix \ref{sec:projectiveLimits}, while now we specialize the Proposition \ref{prop:infinite_compositions} from there
to the context we are interested in.
Let $(M_0,\cO) $ be a $\mathbb Z^*$-graded $Q$-manifold over $M_0$ with the sheaf of functions $\cO $. 
This sheaf of functions comes equipped with the (negative) filtration as in Definition \ref{def:filtration}, so that $ A^{i}:= \cO/ F^i \cO$ is a projective system of algebras.  Since $ \cap_{i \in \N}  F^i \cO  = \{0\}$, its projective limit ${A^\infty}$ is canonically isomorphic to $  \cO $.  

If a degree $0$ vector field $\cv$ such that $
  \cv [\cO] \subset F^n \cO $
for some $n \geq 1 $ is given, then for every $i\in \N$, the family of endomorphisms 
$$ \begin{array}{rcl}\cO/ F^i \cO  &\to& \cO/ F^i \cO \\ f& \mapsto & \sum\limits_{k \geq 0} \frac{t^k}{k!}  \cv^k [f] \end{array}$$
is well-defined because the sum is finite, it is an algebra endomorphism for all $ i\in \N$, and is a morphism of projective systems of algebras. We denote its projective limit by $ e^{t\cv}$. By construction,  for all $s,t \in \R $ we have $ e^{s\cv} e^{t\cv}= e^{(s+t)\cv}   $ and $e^{0\cv}=\Id_{\cO} $. As a consequence $e^{t\cv}$ is a diffeomorphism of the graded manifold $(M_0, \cO)$.

\begin{prop} \label{prop:infinite_compositions_Z}
Given a family $(\cv_n)_{n \in \N}$ of degree zero vector fields on a graded manifold $(M_0,\cO) $ such that 
 $$ \cv_n \colon \cO \to F^n \cO $$ 
 the infinite composition
  $ \bigcirc_{i\uparrow\in\N} e^{ \cv_i} $
 is a diffeomorphism of the graded manifold $(M_0,\cO)$, well-defined in the sense of \cite{AKVS} 
\end{prop}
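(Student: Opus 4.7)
The plan is to interpret the infinite composition $\bigcirc_{i\uparrow\in\N} e^{\cv_i}$ level-by-level on the projective system $A^i := \cO/F^i\cO$, and then apply the general appendix result (Proposition \ref{prop:infinite_compositions}) combined with the canonical identification $\cO \simeq \lim_\leftarrow A^i$.

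First, each individual $e^{\cv_n}$ is already a well-defined diffeomorphism of $(M_0,\cO)$ by the construction recalled immediately above the statement, since $\cv_n[\cO] \subset F^n \cO$ with $n \geq 1$. As a degree-$0$ algebra automorphism, $e^{\cv_n}$ preserves each graded component $\cO_j$, and therefore preserves the negative filtration $F^i\cO$ (which is generated by $\oplus_{j \leq -i}\cO_j$); it therefore descends to an algebra automorphism of every quotient $A^i$.

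The key observation is the following: as soon as $n \geq i$, the hypothesis gives $\cv_n[\cO] \subset F^n\cO \subset F^i\cO$, so $\cv_n$ induces the zero derivation on $A^i$, and hence $e^{\cv_n}$ reduces to the identity of $A^i$. Consequently, on each fixed $A^i$, the infinite composition truncates to the finite composition
$$\Phi^{(i)} \,:=\, e^{\cv_{i-1}} \circ \cdots \circ e^{\cv_1},$$
which is plainly an algebra automorphism of $A^i$. These $\Phi^{(i)}$ are compatible with the canonical projections $A^{i+1} \twoheadrightarrow A^i$, since each $e^{\cv_n}$ is, and since $e^{\cv_i}$ acts as the identity on the $A^i$-level. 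The family $(\Phi^{(i)})_{i\in\N}$ therefore defines a morphism of projective systems of algebras, to which Proposition \ref{prop:infinite_compositions} applies and yields an algebra endomorphism $\Phi$ of $\cO$; by construction, this $\Phi$ is precisely $\bigcirc_{i\uparrow\in\N} e^{\cv_i}$ interpreted in the sense of \cite{AKVS}.

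To show that $\Phi$ is a diffeomorphism, I would take as inverse the reverse-order infinite composition $\bigcirc_{i\downarrow\in\N} e^{-\cv_i}$, which by the very same level-wise argument descends on each $A^i$ to the automorphism $(\Phi^{(i)})^{-1}$ and assembles into a two-sided inverse of $\Phi$ in the projective limit. The main technical subtlety here is really only the bookkeeping around the ordering convention in $\bigcirc_{i\uparrow\in\N}$ and the careful verification that the $\Phi^{(i)}$ commute with the transition maps of the projective system; once one observes that only finitely many factors act non-trivially at each finite level $A^i$, all potential ambiguities collapse and the statement reduces to the projective-limit machinery already developed in the paper and the appendix.
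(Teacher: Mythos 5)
Your proof is correct and follows exactly the route the paper intends: the statement is presented there as a direct specialization of Proposition \ref{prop:infinite_compositions} to the projective system $A^i=\cO/F^i\cO$ set up in Section \ref{sec:exp}, and your level-by-level truncation, the observation that $e^{\cv_n}$ acts trivially on $A^i$ for $n\geq i$, and the assembly of the level-wise inverses into a two-sided inverse in the projective limit are precisely the details being left implicit. The only point stated a bit loosely is that preservation of the graded components $\cO_j$ does not by itself give preservation of $F^i\cO$ (whose elements can have any total degree); one should instead use that $\cv_n$ is a derivation sending $\cO$ into $F^n\cO$ with $n\geq 1$, which forces $\cv_n(F^i\cO)\subset F^i\cO$ on the generating monomials and hence the invariance of the filtration under $e^{\cv_n}$.
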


\section{$Q$-manifolds with curvature}

\label{sec:withcurvature}

\subsection{Normal forms outside of the zero locus of the curvature}

For $(M_0,\cO,Q) $ a $ \mathbb Z^*$-graded $Q$-manifold\footnote{We present the results of this section for $Q$-manifolds with a smooth base.
All results in section \ref{sec:withcurvature} extend to $Q$-manifolds over affine varieties or $Q$-manifolds over Stein varieties.This may no longer be true for the results of Section \ref{sec:zerocurv}, where we will treat non-smooth cases separately.
}, recall (Equation \ref{eq:truc}) that the vector bundle $E_{+1}$ (in fact its dual) is defined by applying the Serre-Swan theorem:
 $$\Gamma(E_{+1}^*) = \left( \frac{\mathcal I}{\mathcal I^2}\right)_{-1} =  \frac{\mathcal I_{-1}}{\mathcal I^2_{-1}}
 =\frac{F^1 \cO_{-1} }{F^2 \cO_{-1} }.$$
 
 \begin{deff} \label{def:curv}
 The composition
  $$ F^1 \cO_{-1} \stackrel{Q}{\longrightarrow} \cO_0 \longrightarrow \cO(M_0) \simeq  \frac{\cO_0}{F^1 \cO_0} $$
  is $\cO(M_0) $-linear and admits $F^2 \cO_{-1}$ in the kernel. It is therefore given by the contraction with a canonical section of $E_{+1} $ that we call the \emph{curvature of the $Q$-manifold $M$} and \emph{denote by $\kappa $}.
\end{deff}  
  Equivalently, the curvature is defined by the following commutative diagram, whose horizontal lines are exact:
  $$ \xymatrix{\ar@{^(->}[r] \ar[d]^{Q} F^2 \cO_{-1}& \ar@{->>}[r] \ar[d]^{Q} F^1 \cO_{-1} & \Gamma(E_{+1}^*)\ar@{..>}[d]^{\mathfrak i_\kappa} \\ F^1 \cO_0\ar@{^(->}[r]&\ar@{->>}[r]\cO_0 & \cO(M_0) }
  $$

  \begin{rem}
   \normalfont
   The previous description of the curvature, although abstract, implies that it is a canonical notion, but it can be described in a more explicit manner, upon
    choosing a splitting. The polynomial degree is then well-defined, and $\mathfrak i_\kappa $ is the only component of $Q$ of polynomial degree $ -1$.
    $$  Q = \mathfrak i_\kappa + \sum_{i \geq 0} Q^{[i]} $$
    where $Q^{[i]} $ is the component of polynomial degree $i$ of $Q$.
 Also, after having chosen a splitting (which always exists in the smooth case) and local coordinates:
    \begin{equation}
      \label{eq:localCoord}
    Q =  \sum_{i=1}^{{\mathrm{rk}}(E_{+1})} \tilde\kappa_i(x)
    \frac{\partial}{\partial \eta_i}
    +  \sum_{j=1}^{\mathrm{dim}(M_0)}f_j \frac{\partial}{\partial x_j} +\sum_{i \in \mathbb Z \backslash \{0,1\}}\sum_{j=1}^{  \mathrm{rk}(E_{i})}g_{i,j} \frac{\partial}{\partial \theta_{i,j}}.  
    \end{equation}
    Here the $x_i$'s are the variables in the base manifold, the $\eta_i $'s are the degree $ -1$ variables, the $\theta_{i,j} $'s are the  degree $j $ variables for $j\neq 0,-1$, the functions $\tilde{\kappa}_i(x)\in \cO_0 $ are functions whose projection in $C^\infty(M_0)$ are the  components of the section $\kappa $, $f_j \in \cO_{1}$, and $ g_{i,j} \in \cO_{1-i}$.
  $\square$ \end{rem}

It is well-known \cite{leites}  that on a super-manifold of dimension $(n,p)$, every point where self-commuting odd vector field $Q$ does not vanish on the zero section, there exist local coordinates $(x_1, \dots,x_n, \eta_1, \dots, \eta_p ) $ such that \begin{equation}
\label{eq:justddtheta}
    Q=\tfrac{\partial}{\partial \eta_1} .
\end{equation}
Below is the equivalent of this statement for the $\Z^*$-graded case.

\begin{prop} \label{prop:ifnozerolocus} Let $(M_0, \cO,Q) $ be a $ \mathbb Z^*$-graded $Q$-manifold with associated bundles $(E_i)_{i \in \Z^*} $ over $M_0$. Over every open set $ U \subset M_0$ over which the curvature $\kappa \in \Gamma(E_1) $ is different from zero at every point, there is a splitting 
$ \cO(U) \simeq  \Gamma\left(\tilde{S} \left(\oplus_{i \in \Z^*} E_i^*\right)\right) $ under which $$ Q= \mathfrak i_\kappa, $$
i.e. the degree $+1$ vector field $Q$ is given by the contraction with the curvature.
\end{prop}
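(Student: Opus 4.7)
The strategy is to begin from an arbitrary local splitting of $\cO(U)$ provided by Batchelor's theorem and then rectify $Q$ to $\mathfrak{i}_\kappa$ by an iterative conjugation with flows of degree $0$ vector fields organized by polynomial degree. First, shrinking $U$ if necessary, the nowhere vanishing section $\kappa$ extends to a local frame $(e_1,\dots,e_{r_1})$ of $E_{+1}|_U$ with $e_1=\kappa$; the degree $-1$ generators $(\eta_1,\dots,\eta_{r_1})$ dual to this frame make $\mathfrak{i}_\kappa$ into the constant-coefficient odd derivation $\partial/\partial \eta_1$, and the coordinate expression \eqref{eq:localCoord} reads $Q = \partial/\partial \eta_1 + R$ where $R$ gathers all components of polynomial degree $\geq 0$.

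Next, decompose $Q = \mathfrak{i}_\kappa + \sum_{k\geq 0} Q^{[k]}$ and argue inductively on $n$. Assuming $Q^{[k]}=0$ for all $0\leq k<n$, the polynomial-degree-$(n-1)$ component of $[Q,Q]=0$ reduces to $[\mathfrak{i}_\kappa, Q^{[n]}]=0$. Choosing a degree $0$ vector field $X_n$ of polynomial degree $n+1$ solving $[\mathfrak{i}_\kappa, X_n]=-Q^{[n]}$ and conjugating by $e^{-\mathrm{ad}_{X_n}}$, the expansion $e^{-\mathrm{ad}_{X_n}}(Q) = Q + [\mathfrak{i}_\kappa, X_n] + (\text{higher polynomial degree corrections})$ kills $Q^{[n]}$ without disturbing the already vanishing lower-polynomial-degree parts, which completes the inductive step.

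The main obstacle is the cohomological step, namely the solvability of $[\mathfrak{i}_\kappa, X_n] = -Q^{[n]}$. In the normalized frame the operator $[\partial/\partial \eta_1,\cdot]$ acts on a vector field $Y=\sum_\xi Y_\xi\,\partial_\xi$ by differentiating each coefficient in the odd variable $\eta_1$. Since $\eta_1^2=0$, every function decomposes uniquely as $a+\eta_1 b$ with $a,b$ independent of $\eta_1$; closedness of $Q^{[n]}$ forces its $\eta_1$-derivative to vanish, and the primitive $X_n$ may be taken to be $\eta_1\cdot Q^{[n]}$ (with Koszul signs), whose polynomial degree is $n+1$ as required. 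The nowhere vanishing hypothesis on $\kappa$ is precisely what allows the identification $\mathfrak{i}_\kappa = \partial/\partial \eta_1$ throughout $U$; on the zero locus this contracting homotopy would have no analogue.

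Finally, convergence: since $X_n$ is constructed by multiplication by the negative-degree variable $\eta_1$ applied to a vector field of growing polynomial degree, $X_n(\cO)$ lies in ever deeper levels of the negative filtration as $n\to\infty$. Proposition \ref{prop:infinite_compositions_Z} then delivers a well-defined diffeomorphism $\Phi = \bigcirc_{n\uparrow\in\mathbb N} e^{X_n}$ of $(U,\cO(U))$. Transporting the initial splitting along $\Phi$ produces a splitting in which $Q = \mathfrak{i}_\kappa$, as desired.
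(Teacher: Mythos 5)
Your overall strategy (rectify $Q$ to $\mathfrak i_\kappa$ by conjugating with exponentials of degree-$0$ vector fields obtained from an explicit homotopy) is the right one, and your primitive $X_n=\eta_1 Q^{[n]}$ is essentially the paper's $-\alpha Q^{(n)}$ with $\alpha=\eta_1$. But there is a genuine gap in the convergence step. You organize the whole induction by \emph{polynomial} degree and then assert that ``$X_n(\cO)$ lies in ever deeper levels of the negative filtration as $n\to\infty$''. This is false in general: the polynomial degree counts \emph{all} generators, whereas the filtration $F^i\cO$ --- the only topology in which the exponentials and the infinite composition of Proposition \ref{prop:infinite_compositions_Z} are defined --- is governed by the negative degree ${\mathrm{deg}}_-$, to which generators of positive total degree contribute nothing. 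For instance, a term of $Q$ of the form $\theta^{n+1}\,\partial/\partial\theta'$, with $\theta$ of degree $2$ and $\theta'$ of degree $2n+1$, has total degree $+1$, polynomial degree $n$ and negative degree $0$; the corresponding $X_n=\eta_1\theta^{n+1}\partial/\partial\theta'$ maps $\cO$ into $F^1\cO$ but not into $F^2\cO$, for every $n$. Since a $\Z^*$-graded manifold may have generators in infinitely many degrees, such terms can occur for all $n$ simultaneously, the hypotheses of Proposition \ref{prop:infinite_compositions_Z} are not met, and $\bigcirc_{n}e^{X_n}$ need not converge. (More generally one only knows ${\mathrm{deg}}_-(Q^{[n]})\geq -1$, hence ${\mathrm{deg}}_-(X_n)\geq 0$, which is not even enough to guarantee that each individual $e^{X_n}$ is well defined as an automorphism of the completed algebra.)

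This is precisely why the paper runs the argument in two stages: Lemma \ref{Q-} first does the induction on the \emph{negative} degree, killing the components $Q^{(n)}$, $n\geq 0$, with $\cv_{n+1}=-\alpha Q_n^{(n)}$, whose negative degree is exactly $n+1$, so convergence is automatic; only after $Q$ has been reduced to its negative part does Lemma \ref{lem:killNegativePart} perform the induction on polynomial degree, and there polynomial degree \emph{does} bound negative degree from below because all the remaining non-constant generators have ${\mathrm{deg}}_-\geq 1$. Your argument would go through verbatim if the bundles $E_i$ were concentrated in a bounded range of degrees (one then checks ${\mathrm{deg}}_-(Q^{[n]})\geq (n-N)/2\to\infty$), but in general the induction must be reordered as in the paper. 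A secondary point: completing $\kappa$ to a frame of $E_{+1}$ may force you to shrink $U$, whereas the statement is over all of $U$; the paper avoids this by using only a section $\alpha\in\Gamma(E_{+1}^*)$ with $\langle\kappa,\alpha\rangle=1$, which exists globally on $U$ and plays exactly the role of your $\eta_1$.
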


\begin{rem}\normalfont
 \label{rmk:ifnozerolocus} 
In the situation when there is duality (in the sense of Section \ref{sec:duality}), Proposition \ref{prop:ifnozerolocus} may be restated as follows: every open subset on which the curvature $ \kappa \in \Gamma(E_{+1})$ is different from zero at every point admits a dual $ \mathbb Z^*$-graded Lie $\infty$-algebroid for which all the brackets $(\ell_k)_{k \geq 1} $ are equal to zero except for the $0$-ary bracket (which is $ \kappa$). Also, it immediately implies the existence of local coordinates as in Equation \eqref{eq:localCoord} such that $Q$ takes the form \eqref{eq:justddtheta}.
$\square$ \end{rem}

The proof of Proposition \ref{prop:ifnozerolocus}
goes through the next three lemmas (see Definition \ref{def:derivedpart} for the negative part $Q_-$ of the vector field $Q$).

\begin{lemma} \label{lem:goodfunction}
There exists a degree $-1 $ function $\alpha \in  F^1 \cO $ such that
$Q_- (\alpha)=1 \in \cO$. 
\end{lemma}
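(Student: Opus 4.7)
The plan is to identify $Q_-$ evaluated on degree $-1$ functions with contraction by the curvature $\kappa$, and then exploit the nowhere-vanishing hypothesis to invert this contraction.

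First, I would unpack what $Q_-(\alpha) = 1$ actually says. For $\alpha \in F^1\cO_{-1}$, the element $Q_-(\alpha)$ lies in the degree $0$ part of $\cO/\mathcal{I}_+$. Since the $\Z^*$-graded convention forbids generators of degree $0$ outside $C^\infty(M_0)$, every element of $(\mathcal{I}_+)_0$ pairs a positive-degree generator with a negative-degree generator, and so coincides with $F^1\cO_0$; consequently $(\cO/\mathcal{I}_+)_0 \simeq \cO_0/F^1\cO_0 \simeq C^\infty(M_0)$. Under this identification, the map $\alpha \mapsto Q_-(\alpha)$ on $F^1\cO_{-1}$ is exactly the top-to-right-to-bottom composition $F^1\cO_{-1} \xrightarrow{Q} \cO_0 \twoheadrightarrow C^\infty(M_0)$ from the commutative diagram of Definition \ref{def:curv}. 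By that diagram it factors through the quotient $\Gamma(E_{+1}^*) = F^1\cO_{-1}/F^2\cO_{-1}$ and, on that quotient, is precisely contraction with the curvature $\mathfrak{i}_\kappa$.

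Second, I would construct $\alpha$ using the nowhere-vanishing of $\kappa$. On $U$ the section $\kappa \in \Gamma(E_{+1}|_U)$ has no zero, so the $C^\infty(U)$-linear bundle morphism $\mathfrak{i}_\kappa \colon E_{+1}^*|_U \to \underline{\R}_U$ is a surjection onto a trivial line bundle. Any such surjection of vector bundles over a smooth base admits a splitting (either by a partition of unity argument on $U$, or since the kernel is a direct summand by projectivity of $\Gamma(E_{+1}^*|_U)$ as a $C^\infty(U)$-module). Pick a global section $\alpha \in \Gamma(E_{+1}^*|_U)$ with $\mathfrak{i}_\kappa(\alpha) = 1$. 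Via the chosen splitting provided by Batchelor's theorem, $\alpha$ is naturally a linear element of $F^1\cO_{-1}(U)$, and by the identification in the previous paragraph, $Q_-(\alpha) = \mathfrak{i}_\kappa(\alpha) = 1$.

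The potentially delicate part is only the bookkeeping: keeping straight the chain of identifications $(\cO/\mathcal{I}_+)_0 \simeq \cO_0/F^1\cO_0 \simeq C^\infty(M_0)$, and remembering that an arbitrary element of $F^1\cO_{-1}$ contains extra higher-polynomial-degree contributions (lying in $F^2\cO_{-1}$) that are irrelevant to the contraction with $\kappa$. Once one works with the purely linear piece $\alpha \in \Gamma(E_{+1}^*) \hookrightarrow F^1\cO_{-1}$ coming from the splitting, the construction is essentially immediate and no further argument is needed.
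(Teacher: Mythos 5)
Your proposal is correct and follows essentially the same route as the paper's proof: choose a splitting, use the nowhere-vanishing of $\kappa$ to pick a linear section $\alpha\in\Gamma(E_{+1}^*)\subset F^1\cO_{-1}$ with $\langle\kappa,\alpha\rangle=1$, and observe that the discrepancy between $Q(\alpha)$ and $\langle\kappa,\alpha\rangle$ lies in $F^1\cO_0=\cO_0\cap\mathcal I_+$ and hence dies in the quotient defining $Q_-$. Your extra bookkeeping identifying $(\cO/\mathcal I_+)_0\simeq\cO_0/F^1\cO_0\simeq C^\infty(M_0)$ is exactly the identity $F^1\cO_0=\cO_0\cap\mathcal I_-=\cO_0\cap\mathcal I_+$ the paper invokes, so nothing is missing.
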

\begin{proof}
Take any splitting $ \cO(U) \simeq \Gamma \left ( \tilde{S}(\oplus_{i \in \Z^*} E^*_i) \right) $. Since the curvature $\kappa $ is a nowhere vanishing section  of $ E_{+1}$, there exists  $\alpha  \in \Gamma(E_{+1}^*) \subset F^1 \cO $ such that $\langle\kappa ,\alpha \rangle=1 $. We then have $Q(\alpha)=\langle\kappa ,\alpha \rangle+ F=1 + F $ for some function $F\in F^1\cO_0=\cO_0\cap\mathcal I_-=\cO_0\cap\mathcal I_+ $. As a consequence, $ Q_-(\alpha)= 1$.
$\blacksquare$ \end{proof}

\begin{lemma} \label{Q-}
There exists a splitting $ \cO(U) = \Gamma_U\left(\tilde{S}(\oplus_{i \neq 0}  E_i^*)\right) $ such that $Q= Q_- $.
\end{lemma}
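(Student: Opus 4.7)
I plan to construct the splitting iteratively, using the $\alpha$ from Lemma~\ref{lem:goodfunction} as a contracting homotopy and applying Proposition~\ref{prop:infinite_compositions_Z} to assemble the successive corrections into a single diffeomorphism of $(M_0,\cO)$. Write $E_+^* := \oplus_{i\geq 1} E_i^*$ for the negative-degree generators and $E_-^* := \oplus_{i\leq -1} E_i^*$ for the positive-degree ones; under any splitting $\cO \simeq \tilde S(E_+^* \oplus E_-^*)$, the equality $Q = Q_-$ amounts, by the Leibniz rule, to the two conditions $Q(E_-^*) = 0$ and $Q(E_+^*) \subset \tilde S(E_+^*)$, and it is these that the iteration will achieve.

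The key algebraic input is the homotopy identity for the degree $-1$ operator $h\colon f \mapsto \alpha f$:
\[
(Qh + hQ)(f) = Q(\alpha)\, f = (1+F)\, f, \qquad F := Q(\alpha) - 1 \in \mathcal I_+ \cap F^1\cO_0.
\]
Because $F \in F^1\cO$ and $\bigcap_i F^i\cO = \{0\}$, the element $1+F$ is invertible in $\cO_0$ with inverse $\sum_{k\geq 0}(-F)^k$ converging in the negative-filtration topology, yielding a filtration-graded acyclicity of $Q$ that will drive the whole construction.

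First I will use it to arrange $Q(\beta) = 0$ for every positive-degree generator $\beta \in E_-^*$. Set $\beta^{(0)} = \beta$ and $\beta^{(n+1)} := \beta^{(n)} - \alpha\, Q(\beta^{(n)})$; the identity above together with $Q^2 = 0$ gives $Q(\beta^{(n+1)}) = -F\cdot Q(\beta^{(n)})$, so $Q(\beta^{(n)}) \in F^n\cO$ and the differences $\beta^{(n+1)} - \beta^{(n)} = -\alpha\, Q(\beta^{(n)})$ lie in $F^{n+1}\cO$; hence $\beta^{(\infty)}:=\lim_n \beta^{(n)}$ exists and is $Q$-closed. An entirely analogous iteration — subtracting at each step the $\mathcal I_+$-component of $Q(\gamma)$ by means of $\alpha$ and the same homotopy identity — will arrange $Q(\gamma) \in \tilde S(E_+^*)$ for every negative-degree generator $\gamma \in E_+^*$. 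Each elementary replacement is the time-$1$ flow of a degree-$0$ vector field whose image lies in a suitably deep $F^n\cO$, so Proposition~\ref{prop:infinite_compositions_Z} assembles all these flows into a single diffeomorphism of $(M_0,\cO)$, producing the desired splitting; in it, $Q(E_-^*)=0$ and $Q(E_+^*) \subset \tilde S(E_+^*)$, hence $Q$ coincides with the lift of $Q_-$ via $\cO/\mathcal I_+ \simeq \tilde S(E_+^*)$, which is precisely the statement $Q = Q_-$.

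The main obstacle will be the bookkeeping: the two families of corrections must be interleaved and ordered by negative-filtration level so that each successive one lives in strictly deeper filtration than all previous — both so that Proposition~\ref{prop:infinite_compositions_Z} genuinely applies and so that later corrections do not spoil the earlier ones. Once this organisation is in place, the convergence and the fact that the resulting $Q$ is literally (not merely modulo deeper filtration) equal to $Q_-$ follow automatically from $\bigcap_i F^i\cO = \{0\}$.
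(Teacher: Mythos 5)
There is a genuine gap, and it sits at the very first step: your reformulation of the goal is incomplete. A derivation of $\cO(U)$ is determined by its values on \emph{all} generators, and these include the degree-$0$ coordinate functions on $M_0$; moreover the lift of $Q_-$ is $C^\infty(M_0)$-linear (it is a vertical vector field, as noted after Definition \ref{def:derivedpart}). So besides $Q(E_-^*)=0$ and $Q(E_+^*)\subset \tilde{S}(E_+^*)$ you must also arrange $Q\!\left(C^\infty(M_0)\right)=0$, and this third condition neither follows from your two nor is ever addressed by your iteration, which only replaces the fibre generators $\beta,\gamma$ and never modifies the base coordinates. Concretely, take one base coordinate $x$, one generator $\eta$ of degree $-1$ and one generator $\theta$ of degree $+1$, and set $Q=\tfrac{\partial}{\partial \eta}+\theta\,\tfrac{\partial}{\partial x}$. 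This squares to zero, has nowhere-vanishing curvature (so the standing hypothesis of Proposition \ref{prop:ifnozerolocus} holds), and satisfies both of your conditions, since $Q(\theta)=0$ and $Q(\eta)=1\in\tilde{S}(E_+^*)$; yet $Q\neq Q_-=\tfrac{\partial}{\partial \eta}$ because $Q(x)=\theta\neq 0$. Your scheme terminates on such a $Q$ without changing it. The cure necessarily involves correcting the degree-$0$ coordinates by elements of $F^1\cO_0$: here $x\mapsto x-\eta\theta$ gives $Q(x-\eta\theta)=\theta-\theta=0$ and puts $Q$ in the form $Q_-$.

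This is exactly what the paper's proof supplies and your proposal misses: it works at the level of vector fields rather than generator-by-generator. Decomposing $Q=\sum_{n\geq -1}Q^{(n)}$ by negative degree, the relation $[Q_n,Q_n]=0$ yields $[Q^{(-1)},Q_n^{(n)}]=0$ at lowest order, and conjugating by $e^{\cv_{n+1}}$ with $\cv_{n+1}=-\alpha\, Q_n^{(n)}$ kills the component $Q_n^{(n)}$ because $[Q^{(-1)},\alpha Q_n^{(n)}]=Q_-(\alpha)\,Q_n^{(n)}=Q_n^{(n)}$; Proposition \ref{prop:infinite_compositions_Z} then assembles the flows precisely as you intend. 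The point is that the correcting vector field $-\alpha Q_n^{(n)}$ automatically carries $\tfrac{\partial}{\partial x_\bullet}$-components (in the example above it produces the shift $x\mapsto x-\eta\theta$), whereas redefinitions of $\beta$ and $\gamma$ alone cannot. Your homotopy identity $(Qh+hQ)f=(1+F)f$ with $F\in F^1\cO_0$, the telescoping estimate $Q(\beta^{(n)})\in F^n\cO$, and the filtration bookkeeping for Proposition \ref{prop:infinite_compositions_Z} are all sound — they are the function-level shadow of the paper's $\mathrm{ad}$-level argument — but as written the proposal proves a strictly weaker normal form than $Q=Q_-$, and the claimed equivalence "$Q=Q_-$ iff the two generator conditions hold" is false.
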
 
\begin{proof}
The choice of a splitting $ \cO(U) \simeq \Gamma\left(\tilde{S}(\oplus E^*)\right) $ allows to decompose functions and vector fields according to their negative degree, and any function of given degree decomposes as a sum
$ f= \sum_{n \geq 0} f^{(n)}$
with $f^{(n)}$ a function of negative degree $n$ ($deg_-(f^{(n)}) = n$). For a degree $+1$ vector fields $R$, we have:
 $$ R= \sum_{i \geq -1} R^{(n)}  $$
 with $R^{(n)}$ a vector field of negative degree $n$. Notice that, for instance, $Q_-=Q^{(-1)} $. 

We construct by induction a sequence $\Phi_n =e^{\cv_n}$ (starting at $n=1$) of graded manifold isomorphisms that satisfy the following conditions:
\begin{enumerate}
    \item $\cv_n$ is a vector field such that $\cv_n : \cO\to F^n \cO$ for all $n \in \N $ (i.e. $\cv_n^{(i)} =0$ for  $ i < n$). 
    \item the push-forward $Q_n$ of the vector field $Q$  by  $ \Phi_n \circ \dots \circ \Phi_1$ is of the form: 
     $$ Q_{n+1}  = Q^{(-1)} +  Q_{n+1}^{(n+1)}+ \cdots $$ 
\end{enumerate}
The sequence is constructed as follows:
$Q_0=Q$ and at each step we choose $\cv_{n+1} = -\alpha Q_{n}^{(n)} $, with  $\alpha$ as in Lemma \ref{lem:goodfunction}.
It follows from $[Q_n, Q_n]=0 $ that
 $[Q^{(-1)} ,  Q_{n}^{(n)}]=0 $. 
 As a consequence, the push-forward vector of $Q_n$ by  $e^{ \cv_{n}}$, i.e. the derivation:
  $$ e^{-\cv_{n}} Q_n e^{ \cv_{n} }   =\sum_{k=0}^\infty  \frac{1}{k!} {\mathrm{ad}}_{\cv_{n}}^k Q  \hbox{ (all sums are finite for a given negative degree)} $$
is given (up to components of negative degree $\geq n+1 $) by 
$$ Q_n + [Q_n,\cv_{n}] = Q^{(-1)} + Q_n^{(n)} -  [Q^{(-1)},  \alpha Q_n^{(n)}] =  Q^{(-1)} + Q_n^{(n)} -   Q_n^{(n)} = Q^{(-1)} . $$
The henceforth constructed sequence satisfies the required assumption. We then apply Proposition \ref{prop:infinite_compositions_Z} to construct the infinite composition $\Psi:= \bigcirc_{i\uparrow \geq 1} e^{\cv_i} $.
By construction, the push-forward of $Q$ through $\Psi $  is $Q_-$, which completes the proof.
$\blacksquare$ \end{proof}

\begin{lemma}
\label{lem:killNegativePart}
There exists a splitting $ \cO(U) = \Gamma(\tilde{S}(\oplus_{i\neq 0} E^*_i)) $ such that $Q^{(-1)}= \mathfrak i_\kappa$.
\end{lemma}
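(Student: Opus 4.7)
The plan is to begin from the splitting furnished by Lemma~\ref{Q-}, in which $Q = Q^{(-1)}$ has negative degree $-1$, and then further adjust only the generators of negative degree $\leq -2$ by $\mathcal I^2$-corrections so as to remove the action of $Q$ on them. After these corrections, $Q$ will act on each new generator exactly as $\mathfrak i_\kappa$ does, and the result will follow because a derivation is determined by its values on generators.

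First I would record what is already true in the splitting provided by Lemma~\ref{Q-}. If $\xi$ is a positive-degree generator or a base coordinate, then $\deg_-(\xi)=0$, so $Q(\xi)$ would have $\deg_- = -1$, which is impossible; hence $Q(\xi) = 0 = \mathfrak i_\kappa(\xi)$. Moreover, on $\Gamma(E_{+1}^*)$ the derivation $Q$ lands in $C^\infty(M_0)$ and, by Definition~\ref{def:curv}, is exactly the contraction with $\kappa$. Thus $Q$ already agrees with $\mathfrak i_\kappa$ on every generator except possibly on those of $\Gamma(E_{+i}^*)$ for $i \geq 2$.

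Next, after shrinking $U$ if necessary so that $E_{+1}^*|_U$ trivialises, I would pick a section $\alpha \in \Gamma(E_{+1}^*|_U)$ satisfying $\langle\kappa,\alpha\rangle = 1$, so that $Q(\alpha) = 1$. For each generator $\theta$ of $\Gamma(E_{+i}^*)$ with $i \geq 2$, I set
\[
\tilde\theta \;:=\; \theta - \alpha \cdot Q(\theta).
\]
The correction $\alpha \cdot Q(\theta)$ lies in $\mathcal I^2$ (it is a product of two elements of $\mathcal I$) and carries the same total and negative degrees as $\theta$, so the collection $\{\tilde\theta\}$ together with the unchanged generators still defines a splitting of $\cO(U)$. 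Using $Q(\alpha)=1$, $Q^2 = 0$, and the graded Leibniz rule (recalling that $\alpha$ is odd) I compute
\[
Q(\tilde\theta) \;=\; Q(\theta) \;-\; Q(\alpha)\,Q(\theta) \;+\; \alpha\,Q^2(\theta) \;=\; 0 \;=\; \mathfrak i_\kappa(\tilde\theta),
\]
so $Q$ and $\mathfrak i_\kappa$ coincide on $\tilde\theta$ as well.

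Since $Q$ and $\mathfrak i_\kappa$ are both derivations of $\cO(U)$ and now agree on every generator of the new splitting, they agree on all of $\cO(U)$, which is the claim. The most delicate point in the argument, and the one I would double-check carefully, is that the simultaneous one-shot redefinition $\theta \mapsto \theta - \alpha Q(\theta)$ across all $i\geq 2$ really yields a bona fide splitting of $\cO(U)$; this is ultimately fine because the correction is always of strictly higher polynomial degree than the generator being modified, so the identification of $\mathcal I/\mathcal I^2$ with $\oplus_i \Gamma(E_{-i})$ is untouched and no iterative completion as in Lemma~\ref{Q-} is required.
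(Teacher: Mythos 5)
Your proof is correct in substance, but it takes a genuinely different route from the paper's. The paper repeats the scheme of Lemma \ref{Q-} one level up: it decomposes $Q^{(-1)}=\mathfrak i_\kappa + Q^{[0]}+Q^{[1]}+\cdots$ by polynomial degree, kills the components $Q^{[k]}$ one at a time by conjugating with exponentials of the degree-zero vector fields $\alpha Q^{[k]}$, and invokes Proposition \ref{prop:infinite_compositions_Z} to form the infinite composition, using that the ideal of polynomial degree $k$ in the negative variables lies in $F^k\cO$. You instead perform a single explicit change of generators $\tilde\theta=\theta-\alpha\,Q(\theta)$, exploiting the contracting-homotopy identity $Q(\alpha)=1$ (the same mechanism as in Corollary \ref{rem:goodfunction_homotopy}), together with two correct observations: in the splitting of Lemma \ref{Q-} the field $Q=Q^{(-1)}$ is homogeneous of negative degree $-1$, hence vanishes on all generators of negative degree $0$, and it equals $\langle\kappa,\cdot\rangle$ \emph{exactly} on $\Gamma(E_{+1}^*)$ — the value $Q(\alpha)=1$ holds on the nose, with no $F^1\cO_0$ correction as in Lemma \ref{lem:goodfunction}, precisely because $Q(\alpha)$ is homogeneous of negative degree $0$. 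Your sign bookkeeping in $Q(\tilde\theta)=Q(\theta)-Q(\alpha)Q(\theta)+\alpha Q^2(\theta)=0$ is right, and since $Q$ is $C^\infty(M_0)$-linear in this splitting, $\theta\mapsto\tilde\theta$ is $C^\infty(M_0)$-linear and so is a legitimate candidate for a new splitting. Your approach buys a one-shot, conjugation-free argument; the paper's buys uniformity, since the same perturbative machine is reused elsewhere (e.g.\ in Theorem \ref{theo:existenceUnicity}).

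Two points need repair. First, shrinking $U$ is unnecessary and even harmful: the lemma asserts a splitting over all of $U$, and Lemma \ref{lem:goodfunction} already provides $\alpha\in\Gamma(E_{+1}^*)$ with $\langle\kappa,\alpha\rangle=1$ globally on $U$ (via a fiber metric; no trivialization of $E_{+1}^*$ is needed), so cite that lemma and keep $U$. Second, and more seriously, your justification of the point you yourself flag as delicate is not the right one. Being the identity on $\mathcal I/\mathcal I^2$ guarantees invertibility for $\mathcal I$-adically complete algebras, but $\cO$ is complete only for the negative filtration $F^\bullet$, and your correction gains nothing there: $\mathrm{deg}_-\bigl(\alpha\,Q(\theta)\bigr)=\mathrm{deg}_-(\theta)$, so ``strictly higher polynomial degree'' does not by itself make the formal inverse of your substitution converge. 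What saves the argument is the constraint of fixed total degree: a monomial of total degree $-i$ and negative degree $d$ has positive degree $d-i$, hence involves at most $d$ negative and $d-i$ positive generators, so its polynomial degree is at most $2d-i$; consequently, within fixed total degree, unbounded polynomial degree forces unbounded negative degree, and the recursive inversion $\theta=\tilde\theta+\alpha\,Q(\theta)$ does converge in the $F$-adic topology. So, contrary to your closing sentence, an iterative completion argument is still needed — it is merely hidden in inverting your one-shot substitution — but it goes through once this degree bound is recorded.
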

\begin{proof}
The proof consists in repeating the steps of the proof of Lemma \ref{Q-}, by using now the polynomial degree, which is well-defined in the negative part. 
We write
 $$ Q^{(-1)}= \mathfrak i_\kappa + Q^{[0]}+Q^{[1]} + \cdots, $$
 where $[i]$ now stands for the polynomial degree. We then transport $Q^{(-1)}$ through  $e^{\alpha Q^{[0]}}$. Since $ [ \mathfrak i_\kappa, Q^{[0]} ]=0$, the vector field obtained in such a way is now of the form:
  $$ Q^{(-1)}_1 = \mathfrak i_\kappa + Q^{[1]}_1 +  Q^{[2]}_1+ \cdots, $$
for new $(Q_1 - \mathfrak i_\kappa)$ of polynomial degree $\geq 1$. 
 We then construct recursively a collection of isomorphisms of the graded manifold $ M$ that satisfy the requirements of Proposition \ref{prop:infinite_compositions_Z}: since we only use negative variables at this point, the ideal of elements of polynomial degree $k$ in negative variables is included in $F^k \cO $ (cf. to be more precise \cite{AKVS})). Their infinite composition  intertwines $Q^{(-1)} $ with $\mathfrak i_\kappa $. 
$\blacksquare$ \end{proof}

\vspace{.5cm}

\begin{proof} {\textbf{(of Proposition \ref{prop:ifnozerolocus})}}
The statement follows from Lemmas \ref{Q-} and \ref{lem:killNegativePart} above: Lemma \ref{Q-} constructs an isomorphism of graded manifold under which $Q$ becomes its negative part part $Q_{-} $, and  Lemma   \ref{lem:killNegativePart} 
constructs an isomorphism of graded manifold under which $Q_-$ becomes $\mathfrak i_\kappa $.
$\blacksquare$  \end{proof}

\begin{cor}\label{rem:goodfunction_homotopy}
Let $(M_0,\cO,Q) $ be a $ \mathbb Z^*$-graded $Q$-manifold. On every open set $ U \subset M_0$ over which the curvature $\kappa \in \Gamma(E_1) $ is different from zero at every point, the cohomology of $(\cO(U),Q) $ is zero in every degree.
\end{cor}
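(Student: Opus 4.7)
The plan is to use Proposition \ref{prop:ifnozerolocus} to reduce the statement to a one-line homotopy argument, by producing an explicit contracting homotopy for $Q$ on $U$.

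First, I would apply Proposition \ref{prop:ifnozerolocus} to pick a splitting $\cO(U) \simeq \Gamma\bigl(\tilde{S}(\oplus_{i \in \Z^*} E_i^*)\bigr)$ under which $Q$ coincides with the contraction operator $\mathfrak i_\kappa$. Since $\kappa$ is a nowhere vanishing section of $E_{+1}$ on $U$, the map $\Gamma(E_{+1}^*|_U) \to C^\infty(U)$, $\beta \mapsto \langle \kappa, \beta \rangle$, is surjective (one may, for instance, fix a fiberwise metric on $E_{+1}$ and set $\alpha = \kappa^\flat / \langle \kappa,\kappa^\flat \rangle$), so there exists a global section $\alpha \in \Gamma(E_{+1}^*|_U) \subset \cO(U)_{-1}$ with $\langle \kappa, \alpha \rangle = 1$, as already observed in Lemma \ref{lem:goodfunction}.

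Next, I would consider the $\R$-linear map $h \colon \cO(U) \to \cO(U)$ of degree $-1$ given by left multiplication by $\alpha$. The key computation is the graded commutator $[Q,h] = Qh + hQ$ (here $|Q|=1$, $|h|=-1$): for any $f \in \cO(U)$, the graded Leibniz rule and $Q(\alpha) = \mathfrak i_\kappa(\alpha) = 1$ give
\begin{equation*}
(Qh + hQ)(f) \;=\; Q(\alpha f) + \alpha\, Q(f) \;=\; Q(\alpha)\, f \;-\; \alpha\, Q(f) \;+\; \alpha\, Q(f) \;=\; f,
\end{equation*}
so that $[Q,h] = \mathrm{Id}_{\cO(U)}$.

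Consequently, any $Q$-closed $f \in \cO(U)$ satisfies $f = Q(h(f)) + h(Q(f)) = Q(h(f))$, i.e.\ it is $Q$-exact. This kills the cohomology in every degree, as desired. The only non-formal point to verify carefully is the existence of the global dual section $\alpha$, which is immediate from $\kappa$ being nowhere vanishing; everything else follows from Proposition \ref{prop:ifnozerolocus} and a sign check in the graded Leibniz rule.
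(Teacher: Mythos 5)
Your proposal is correct and follows exactly the paper's own argument: reduce to $Q=\mathfrak i_\kappa$ via Proposition \ref{prop:ifnozerolocus} and check that multiplication by the section $\alpha$ of Lemma \ref{lem:goodfunction} is a contracting homotopy, which is precisely the computation $[Q,h]=\mathrm{Id}$ you carry out. The sign bookkeeping in the graded Leibniz rule and the existence of the global dual section $\alpha$ are both handled correctly, so there is nothing to add.
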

\begin{proof} The statement follows from the easily-checked fact that 
multiplication by the function $\alpha \in \Gamma(E_{+1}^*) $ defined in Lemma \ref{lem:goodfunction} is a contracting homotopy for $Q=\mathfrak i_\kappa $. 
$\blacksquare$ \end{proof}

\subsection{Geometry of the zero locus of the curvature of a $Q$-manifold}
$\;$ \\
Consider a $Q$-manifold $(M_0,\cO, Q) $,
 with associated bundle $(E_i)_{i \in \Z^*} $ and curvature $\kappa \in \Gamma(E_{+1})$ (see Definition \ref{def:curv}).

\begin{deff}
We call the \emph{zero locus ideal of $\cO  $} the image of $$\mathfrak i_\kappa\colon \Gamma(E_{+1}^*) \to \cO $$  and we denote it by $\langle \kappa \rangle $.  
We call \emph{functions on the zero locus}  the quotient algebra $ \cO / \langle \kappa \rangle$.
\end{deff}

The space $ \mathcal I_- + \cO Q[\mathcal I_-] \subset \cO$
is both an ideal of $ \cO$ and stable by $Q$, so that the latter induces  a derivation $Q^+$ of the quotient
 $$ \mathcal K_+ := \frac{\cO}{\mathcal I_- + \cO Q[\mathcal I_-]} , $$
 so that $ (\mathcal K_+, Q_+)$ is a differential graded algebra.

  \begin{deff}
The differential graded algebra $(\mathcal K,Q_+) $ is called the \emph{zero locus DGA} of a
$\mathbb Z^* $-graded $Q$-manifold $(M_0,\cO,Q)$.
 \end{deff}
 
 Here is an important result.

\begin{prop}\label{prop:K+Q+}
The zero locus DGA $(\cK_+,Q_+) $ of a $\mathbb Z^* $-graded $Q$-manifold $(M_0,\cO,Q)$ is a positively graded $Q$-variety over the algebra $C^\infty(M_0)/\langle \kappa \rangle $ of functions on the zero locus and there is a splitting
 \begin{equation}
 \label{eq:Kappa+}
  \mathcal K_+  \simeq \Gamma_{\langle \kappa \rangle} \left( S(\oplus_{i \geq 1} E_{-i}^*)\right) .
 \end{equation}
 Here $\langle \kappa \rangle $ is the zero-locus ideal and $\Gamma_{ \langle \kappa \rangle}(E) = \Gamma(E) \otimes_{C^\infty(M_0)} C^\infty(M_0)/\langle \kappa \rangle$
 for every vector bundle $E \to M$. 
\end{prop}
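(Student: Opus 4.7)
The plan is to fix a Batchelor splitting $\cO \simeq \Gamma(\tilde S(\oplus_{i \in \Z^*} E_i^*))$ and to compute $\mathcal K_+ = \cO/(\mathcal I_- + \cO Q[\mathcal I_-])$ in two stages: first pass to the positive part $\cO/\mathcal I_-$, then divide by the residual image of $\cO Q[\mathcal I_-]$. The output will be the splitting \eqref{eq:Kappa+} together with a well-defined differential $Q_+$.

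The first stage is formal: under the splitting, $\cO/\mathcal I_- \simeq \Gamma(S(\oplus_{i \geq 1} E_{-i}^*))$, a positively graded commutative algebra over $C^\infty(M_0) \simeq \cO_0/F^1\cO_0$ (cf. Remark \ref{rem:Z-star}), with no completion required since each positive degree involves only finitely many symmetric monomials. This already realises $\mathcal K_+$ as a quotient of a positively graded variety over $C^\infty(M_0)$; it remains to identify the kernel of the further quotient.

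The heart of the argument is to show that the image of $\cO Q[\mathcal I_-]$ in $\cO/\mathcal I_-$ is precisely the ideal generated by $\langle \kappa \rangle \subset C^\infty(M_0)$. Since $\mathcal I_-$ is, as an ideal, generated by $\bigoplus_{i \geq 1} \Gamma(E_i^*)$, and $Q$ is a derivation, it suffices to compute $Q$ modulo $\mathcal I_-$ on these generators. For $i \geq 2$ and $\beta \in \Gamma(E_i^*)$, $Q(\beta)$ has degree $1 - i \leq -1$ and therefore already lies in $\mathcal I_-$, contributing nothing. The sole non-trivial contribution comes from $i = 1$: for $\alpha \in \Gamma(E_1^*) \subset F^1\cO_{-1}$, Definition \ref{def:curv} identifies the class of $Q(\alpha)$ in $\cO_0/F^1\cO_0 \simeq C^\infty(M_0)$ with $\mathfrak i_\kappa(\alpha) = \langle \kappa, \alpha \rangle$. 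Varying $\alpha$, the resulting ideal in $C^\infty(M_0)$ is exactly $\langle \kappa \rangle$, and quotienting $\cO/\mathcal I_-$ by the ideal this generates yields
$$\mathcal K_+ \;\simeq\; \Gamma(S(\oplus_{i \geq 1} E_{-i}^*)) \otimes_{C^\infty(M_0)} C^\infty(M_0)/\langle \kappa \rangle \;=\; \Gamma_{\langle \kappa\rangle}(S(\oplus_{i \geq 1} E_{-i}^*)),$$
using that change of base ring commutes with direct sums and symmetric powers of projective modules.

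For the differential piece, I would check that $\mathcal J := \mathcal I_- + \cO Q[\mathcal I_-]$ is $Q$-stable: for $g \in \mathcal I_-$, one has $Q(g) \in Q[\mathcal I_-] \subset \mathcal J$, and for $f Q(g) \in \cO Q[\mathcal I_-]$, the derivation property combined with $Q^2 = 0$ gives $Q(f Q(g)) = Q(f) Q(g) \in \cO Q[\mathcal I_-]$. Hence $Q$ descends to a degree $+1$ derivation $Q_+$ of $\mathcal K_+$ with $Q_+^2 = 0$, equipping $(\mathcal K_+, Q_+)$ with the structure of a positively graded $Q$-variety over $C^\infty(M_0)/\langle\kappa\rangle$ in the sense of Definition \ref{def:positivelyGradedQVariety}. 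The step I expect to be the most delicate is the degree count that isolates $\Gamma(E_1^*)$ as the only source of non-vanishing classes modulo $\mathcal I_-$: it crucially exploits the $\Z^*$ hypothesis (no generators in degree zero beyond $C^\infty(M_0)$) to ensure that $\mathfrak i_\kappa$ is the unique residual contribution.
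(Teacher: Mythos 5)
Your proposal is correct and follows essentially the same route as the paper: your identification of the image of $\cO Q[\mathcal I_-]$ in $\cO/\mathcal I_-$ with the ideal generated by $\langle \kappa \rangle$ is exactly the content of the paper's Lemma \ref{lem:ideal_made_easy} (your degree count on generators, isolating $i=1$, just spells out the inclusion the paper calls ``straightforward''), and the concluding base-change step matches the paper's exact-sequence argument.
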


We start with a lemma.

\begin{lemma}
\label{lem:ideal_made_easy}
For any $\mathbb Z^* $-graded $Q$-manifold $(M_0,\cO,Q)$: 
 $$ \cO Q[\mathcal I_-]+\mathcal I_- = \langle \kappa \rangle \cO + \mathcal I_-, $$
 where $\kappa $ stands for the curvature.
\end{lemma}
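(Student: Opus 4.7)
The plan is to verify each inclusion separately, after fixing an arbitrary Batchelor splitting $\cO \simeq \Gamma(\tilde S(\oplus_{i \in \Z^*} E_i^*))$; this embeds $\Gamma(E_{+1}^*)$ as the degree $-1$ generators inside $\cO$, hence inside $\mathcal I_- = F^1 \cO$. Since $\mathcal I_-$ appears on both sides of the claimed equality, only the images of $\cO Q[\mathcal I_-]$ and of $\langle \kappa \rangle \cO$ modulo $\mathcal I_-$ need to be compared.

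For the inclusion $\supseteq$, I would take an element $\mathfrak i_\kappa(\alpha)$ of the image of $\mathfrak i_\kappa$, with $\alpha \in \Gamma(E_{+1}^*)$. The defining diagram of the curvature (Definition \ref{def:curv}) says precisely that $Q(\alpha) - \mathfrak i_\kappa(\alpha) \in F^1 \cO_0 \subset \mathcal I_-$. Since $\alpha \in \mathcal I_-$, the element $Q(\alpha)$ lies in $Q[\mathcal I_-] \subset \cO Q[\mathcal I_-]$; multiplying by an arbitrary $f \in \cO$ shows that $f\,\mathfrak i_\kappa(\alpha)$ belongs to $\cO Q[\mathcal I_-] + \mathcal I_-$, which after saturating by $\cO$ gives $\langle \kappa \rangle \cO + \mathcal I_- \subset \cO Q[\mathcal I_-] + \mathcal I_-$.

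For the inclusion $\subseteq$, I would start with a typical element $f\,Q(g)$ with $g \in \mathcal I_-$ and decompose $g$ as a (possibly filtration-convergent) sum of monomials $h\beta$ with $\beta$ a single negative-degree generator, $\beta \in \cO_{-k}$ for some $k \geq 1$. Expanding via Leibniz, the term $Q(h)\beta$ is automatically in $\cO \cdot \mathcal I_- = \mathcal I_-$ because $\beta$ is. For the remaining term $h\,Q(\beta)$ I would split by the degree of the generator: when $k \geq 2$, the image $Q(\beta)$ still has negative degree and therefore sits in $\mathcal I_-$; when $k = 1$, so that $\beta \in \Gamma(E_{+1}^*)$, the definition of the curvature again gives $Q(\beta) \equiv \mathfrak i_\kappa(\beta) \pmod{\mathcal I_-}$, so this contribution lies in $\langle \kappa \rangle \cO + \mathcal I_-$. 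Multiplying by $f$ preserves the membership.

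The main obstacle I anticipate is not conceptual but a matter of bookkeeping: the decomposition of an element of $\mathcal I_-$ may involve an \emph{infinite} filtration-convergent sum of generator-times-coefficient, and one must check both that $Q$ commutes with this convergence and that the final sum still lies in the closed submodule $\langle \kappa \rangle \cO + \mathcal I_-$. This should be routinely handled by invoking the continuity of $Q$ and the closedness of $\mathcal I_-$ in the filtration topology, along the lines of the projective-system machinery recalled in Section \ref{sec:exp} and Appendix \ref{sec:projectiveLimits}.
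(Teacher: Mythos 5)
Your proposal is correct and follows essentially the same route as the paper: the substantive inclusion $\langle \kappa \rangle\, \cO + \mathcal I_- \subset \cO\, Q[\mathcal I_-] + \mathcal I_-$ rests on the identical observation that $Q(\alpha)$ and $\mathfrak i_\kappa(\alpha)$ differ by an element of $F^1\cO_0 \subset \mathcal I_-$ for $\alpha \in \Gamma(E_{+1}^*)$. The converse inclusion, which the paper dismisses as straightforward, is the one you spell out via the Leibniz/monomial decomposition, and your treatment of it (including the degree split $k=1$ versus $k\geq 2$ and the filtration-convergence caveat, which indeed becomes harmless modulo $\mathcal I_-$) is sound.
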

\begin{proof}
For any $ \alpha \in \Gamma_{E_{+1}^*}$:
 $$ \langle \kappa, \alpha \rangle  = Q[\alpha] + \sum_{i \geq 1} F_i G_i $$
 where  $F_i,G_i \in \cO$ are functions of degree $-i$ and $+i $ respectively (the sum might be infinite). This proves the inclusion
$$  \langle \kappa \rangle \, \cO + \mathcal I_- \subset \cO Q[\mathcal I_-]+\mathcal I_- .$$
The converse inclusion is straightforward. $\blacksquare$
 \end{proof}

\begin{proof}(of Proposition \ref{prop:K+Q+})
As a consequence of Lemma
\ref{lem:ideal_made_easy} above,
the graded algebra morphism 
$$ \Gamma\left( {S}\left(\oplus_{i \geq 1} E_{-i}^*\right)\right) \to \cK
$$ 
is surjective, so that the following sequence is exact:
$$ 0 \to  \langle \kappa \rangle \, \, \Gamma\left( {S}\left(\oplus_{i \geq 1} E_{-i}^*\right)\right)  \to \Gamma\left( {S}\left(\oplus_{i \geq 1} E_{-i}^*\right)\right) \to \cK_+ \to 0.  $$
Consequently:
\begin{enumerate}
    \item the degree of elements in $\cK_+$ is non-negative by construction,
    \item degree $0$-elements can be identified with $ \cO(M_0)/ \langle \kappa \rangle $,
\item for $k\geq 1$, degree $+k$ elements are elements of degree $k$ in the symmetric algebra (over $\cO(M_0)/ \langle \kappa \rangle$) of $ \oplus_{i \geq 1} \Gamma(E_{-i}^*)\otimes \cO(M_0)/ \langle \kappa \rangle $.
\end{enumerate}
This yields the isomorphism of projective $\cO(M_0)/ \langle \kappa \rangle$-module in Equation \eqref{eq:Kappa+}.
$\blacksquare$
\end{proof}

\begin{deff}
\label{def:NQmanifoldZeroLocus}
Let $(M_0,\cO,Q) $ be a $\mathbb Z^*$-graded $Q$-manifold.
We call \emph{zero locus NQ-variety} the $NQ$-variety with sheaf of functions $\cK_+ $ and differential $ Q_+$.
\end{deff} 

\begin{rem}
\label{rmk:recovering}
\normalfont
For $(M_0,\cO,Q) $ be a $\mathbb Z^*$-graded $Q$-manifold wit spliting, $ Q$ can be decomposed by the negative degree as an infinite sum:
 $$ Q = q_{-1} + q_0 + \dots + q_i + \dots  $$
 with $ q_i$ a degree $+1$ vector field of negative degree $i $ for $ i \geq -1 $. Then, it is easy to see that $ q_{-1}$ induces the negative part of the $Q$-manifold and that $ q_0 $ (which commutes with $q_{-1} $, hence induces a derivation of $\mathcal K_+ $) induces the differential $Q_+$ of the zero locus NQ-variety. $\square$ \end{rem}

\begin{rem}
\normalfont
As explained in \cite{CLTS}, when the ideal $\kappa$ is the ideal of functions vanishing on a submanifold $X\subset M_0$, then the distribution $\mathcal D:= \rho_1(\Gamma(E_{-1})) $
is made of vector fields tangent to $X$ and its restriction to $X$ is involutive on $X$. 
This singular foliation on the submanifold $X$ is the basic singular foliation of the $NQ$-manifold $(\mathcal K_+,Q_+)$. 
The same conclusion holds when $X$ is a singular subset, provided that vector fields on $X$ can be defined in a appropriate manner (e.g.: an affine variety).
$\square$ \end{rem}

\section{Koszul-Tate resolution and vector fields on the zero locus NQ-variety}
\label{sec:zerocurv}

Recall that for any vector bundle $E \to M_0$ and any ideal $I \subset C^\infty(M_0) $, we use the following notation:
\begin{equation}  \label{eq:GammaI}
    \Gamma_I  (E) := \Gamma(E) \otimes_{C^\infty(M_0)} {C^\infty(M_0)}/{I} .
\end{equation} 
If $I$ is the vanishing ideal of a submanifold $X_I \subset M_0 $ (i.e. $X_I$ is the zero locus of I), then
$\Gamma_I  (E)$ is simply the space of sections of the restriction of $E$ to $X_I$.

\subsection{Koszul-Tate resolutions}

Let $M_0$ be a smooth manifold. We recall the usual definition of a Koszul-Tate resolution of an ideal.

\noi
\begin{deff}
\label{def:KoszulTateResol}
 A \emph{Koszul-Tate resolution}\footnote{We use the standard notations from \cite{henneaux}.} of an ideal $I \subset C^\infty (M_0)$ is a $\mathbb Z_-$-graded $Q$-manifold $(M_0,\cO_-, \delta) $ which 
 \begin{enumerate}
     \item is concentrated in non-positive degree $\cO_- = \oplus_{i \leq 0}  \mathcal O_i$ ,
     with $\cO_0 = C^\infty(M_0)$,
      \item and satisfies that the cohomology of the (total) degree $+1 $ vector field $\de \colon \mathcal O_- \to \mathcal O_- $ is given by
     \beq\label{eq:Koszul-Tate}
H^i (\cO_- , \de)=
\left\{
\be{cc}
\mathcal C^\infty(M_0)/I, & i=0\\
0, & i<0
\ee
\right.  
\eeq
 \end{enumerate}
\end{deff}

\begin{example}
\normalfont
For $M_0=\mathbb R^n$ and $I$ the ideal of functions vanishing at $0$, the graded algebra exterior form $\Omega(M_0) $ equipped with $\delta  = \mathfrak i_E$ the contraction with the Euler vector field is a Koszul-Tate resolution of $I$. In that case, only $E_{+1}=TM_0$ is non-zero and the curvature is the Euler vector field.
\end{example}

We start with a few remarks that may help to understand the notion.

\begin{rem}
\normalfont
Item (1) in Definition \ref{def:KoszulTateResol}
implies that the associated canonical graded vector bundle $E_\bullet $ of a Koszul-Tate resolution is concentrated in positive degrees $E_\bullet = \oplus_{i \geq 1} E_{+i} $.
 $\square$ \end{rem}

\begin{rem}
\normalfont
Let $\kappa \in \Gamma(E_{+1})$ be the curvature of a Koszul-Tate resolution.
The condition on $H^0(\cO,\delta)$ in Definition \ref{def:KoszulTateResol}
implies that the curvature ideal $\langle \kappa \rangle $ of $\kappa \in \Gamma(E_{+1})$ coincides with $I$, i.e. 
a function $ F \in C^\infty(M_0)$ belongs to $I$ if and only if there exists a section $\alpha \in \Gamma(E_{+1}^*) $ such that $ F = \langle \kappa , \alpha \rangle = \delta (\alpha)$.
$\square$ \end{rem}

We will need a variation of Definition
\ref{def:KoszulTateResol}.

\begin{deff}
\label{def:KoszulTateResolK}
Let $I \subset C^\infty(M_0)$ be an ideal, and consider a positively-graded variety  $\mathcal K_+ $ on $C^\infty(M_0)/I $. 
 A \emph{Koszul-Tate resolution of $\mathcal K_+ $} is a pair made of
 \begin{enumerate}
     \item a splitting of  $\mathcal K_+$, i.e. 
     $$ \mathcal K_+ \simeq \Gamma_I \left( S\left( \oplus_{i \geq 1} E_{-i}^* \right) \right), $$
     \item a Koszul-Tate resolution of $I$ with splitting
     $$   \left( \Gamma \left( S\left( \oplus_{i \geq 1} E_{i}^* \right)\right),  \delta \right)  ,$$ 
 \end{enumerate}
 assembled into a  $Q$-manifold $(M_0,\cO ,\tilde{\delta})$ with splitting
 $$\cO \simeq \tilde{S} \left(  \Gamma \left( \oplus_{i \neq 0} E_i^*  \right)\right)  $$
 where  $ \tilde{\delta}$ is the extension of $\delta $ which is identically $0$ on $\oplus_{i \geq 1} \Gamma\left(E_{-i}^*\right) $.
 \end{deff}
 
Here are a few comments about Definition \ref{def:KoszulTateResolK}.
      
\begin{lemma}
\label{lem:cohomologyDeltaK}
Let $(M_0,\cO,\tilde{\delta})$ be a Koszul-Tate resolution of $\mathcal K_+ $ as in Definition \ref{def:KoszulTateResolK}.
\begin{enumerate}
    \item Its zero locus DGA $\frac{\displaystyle \cO}{\displaystyle \cO \tilde{\delta}(\mathcal I_-) + \mathcal I_- } $ is $\mathcal K_+$. 
    \item The cohomology of the complex
 $
(\cO  , \tilde{\de}) 
$ 
is given by:
\beq\label{eq:Koszul-Tate-modif}
H^i (\cO, \tilde{\de})=
\left\{
\be{cc}
\cK_+, 
& i = 0
\\
0, & i>0
\ee
\right.
\eeq
Here the degree considered is the negative degree $ {\mathrm{deg_-}}$ (compare with Equation \eqref{eq:Koszul-Tate} where the index was running over the total degree, i.e. the total degree of $\tilde \de$ is still $+1$).
\end{enumerate}
\end{lemma}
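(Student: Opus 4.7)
For part (1), I would apply Lemma~\ref{lem:ideal_made_easy} directly to the $Q$-manifold $(M_0,\cO,\tilde{\delta})$. Since $\tilde{\delta}$ restricts to $\delta$ on the degree $-1$ generators $\Gamma(E_{+1}^*)$ and vanishes on the positive-degree generators, the curvature of $(\cO,\tilde{\delta})$ coincides with the curvature $\kappa$ of the underlying Koszul--Tate resolution, whose associated ideal is $I$ by Definition~\ref{def:KoszulTateResol}. Hence
\[
\cO\,\tilde{\delta}(\mathcal I_-) + \mathcal I_- \,=\, \langle \kappa \rangle\, \cO + \mathcal I_- \,=\, I\cdot\cO + \mathcal I_-.
\]
Quotienting by $\mathcal I_-$ kills every Koszul--Tate generator and leaves $\Gamma(S(\oplus_{i\geq 1} E_{-i}^*))$ over $C^\infty(M_0)$; a further quotient by $I$ extends scalars to $C^\infty(M_0)/I$, producing $\Gamma_I(S(\oplus_{i\geq 1} E_{-i}^*)) \simeq \mathcal K_+$, as required.

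For part (2), I would exploit the splitting to factorise
\[
\cO \,\simeq\, \mathcal O_+ \otimes_{C^\infty(M_0)} \tilde{\mathcal O}_-,
\]
where $\mathcal O_+ := \Gamma(S(\oplus_{i\geq 1} E_{-i}^*))$ carries only positively graded generators and $\tilde{\mathcal O}_- := \Gamma(\tilde{S}(\oplus_{i\geq 1} E_{+i}^*))$ is the completed Koszul--Tate algebra of $I$. By construction $\tilde{\delta}$ annihilates $\mathcal O_+$ and equals $\delta$ on $\tilde{\mathcal O}_-$, so as a differential it reads $1\otimes\delta$ and is $\mathcal O_+$-linear. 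Elements of $\mathcal O_+$ have negative degree zero, while elements of $(\mathcal O_-)_{-k}$ have negative degree $k$, and $\tilde{\delta}$ lowers this grading by one.

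The negative-degree-$i$ stratum of the complex is therefore the ordinary tensor product $\mathcal O_+ \otimes_{C^\infty(M_0)} (\mathcal O_-)_{-i}$ (no completion needed, since each $(\mathcal O_-)_{-i}$ is a finitely generated projective $C^\infty(M_0)$-module), and flatness of $\mathcal O_+$ over $C^\infty(M_0)$ (being a direct sum of projective modules $S^d(\oplus_{i\geq 1} E_{-i}^*)$) yields
\[
H^i(\cO,\tilde{\delta}) \,\simeq\, \mathcal O_+ \otimes_{C^\infty(M_0)} H^{-i}(\mathcal O_-,\delta).
\]
Plugging the Koszul--Tate cohomology~\eqref{eq:Koszul-Tate} into this identity produces $\mathcal K_+$ for $i=0$ and $0$ for $i>0$, which is exactly~\eqref{eq:Koszul-Tate-modif}. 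The only subtlety I foresee is tracking the completion of $\tilde{\mathcal O}_-$ carefully, but it dissolves once one works one negative degree at a time, where only finite expressions appear and the tensor-product/Künneth step becomes purely algebraic.
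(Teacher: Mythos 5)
Your proposal is correct and follows essentially the same route as the paper: part (1) via the identification $\cO\,\tilde{\delta}(\mathcal I_-)+\mathcal I_- = I\cO+\mathcal I_-$ (which the paper states directly and you derive from Lemma~\ref{lem:ideal_made_easy}), and part (2) via the tensor factorisation $\cO\simeq \Gamma(S(\oplus_{i\geq 1}E_{-i}^*))\otimes_{C^\infty(M_0)}\Gamma(\tilde S(\oplus_{i\geq 1}E_{+i}^*))$ with differential $\mathrm{id}\otimes\delta$ and flatness of the positive factor. Your degree-by-degree handling of the completion is just a more explicit version of the paper's remark that completion does not affect cohomology.
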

\begin{proof}
The first item is a direct consequence of the identification:
$$\cO \tilde{\delta}(\mathcal I_-)+ \mathcal I_-=
 I \cO + \mathcal I_- = \langle I +  \Gamma\left( \oplus_{i\geq 1}  E_i^* \right) \rangle  
.$$
Let us prove the second item. The cohomology of the complex
 \begin{equation}
     \label{eq:complex}
\left( \Gamma(S( \oplus_{i \geq 1}  E_{-i}^*))   \otimes_{C^\infty(M_0)}  \Gamma(S( \oplus_{i \geq 1}  E_{+i}^*)) , {\mathrm{id}} \otimes \delta \right) 
\end{equation} 
 is $ \Gamma(S \oplus_{i \geq 1}  E_{-i}^*) \otimes_{C^\infty(M_0)} C^\infty(M_0)/I  \simeq \cK_+$. 
 Now, $(\cO, \tilde{\de}) $ is the completion of the complex \eqref{eq:complex} with respect to the negative degree, but completion does not affect cohomology, and the result follows. $\blacksquare$
\end{proof}

We conclude the section with an important definition.
To any $Q$-manifold $(M_0,\cO,Q) $ was associated in Definition
\ref{def:derivedpart} another $Q$-manifold, called its negative part $(M_0, \mathcal O/\mathcal I_+, Q_{-}) $.

\begin{deff}
\label{def:KoszulTateResol2}
We say that a  $\mathbb Z^*$-graded $Q$-manifold $(M_0,\cO,Q)$ with curvature $ \kappa$ has a \emph{ Koszul-Tate negative part} if its negative part is a Koszul-Tate resolution of the curvature ideal $\langle \kappa \rangle $.
\end{deff}

\subsection{Vector fields on Koszul-Tate resolutions I: the cohomology}

The space $\fX(\cO_-)$ of vector fields on a Koszul-Tate resolution
 $(M_0,\cO_-,\delta) $ 
of an ideal $I \subset C^\infty(M_0) $ form a DGLA when equipped with the graded commutator and the differential ${\mathrm{ad}}_\delta$. In particular, $\left(\fX(\cO_-), {\mathrm{ad}}_\delta\right) $ is a complex, whose cohomology we now compute.

To start with, let us notice that $ ((\cO_- \delta(\mathcal I_-)+ \mathcal I_-)   \fX(\cO_-), {\mathrm{ad}}_\delta) $ is a subcomplex of $\left(\fX(\cO_-), {\mathrm{ad}}_\delta\right) $. 
The quotient complex  is canonically isomorphic to a complex of the form \begin{equation}\label{eq:deltalin}
\Gamma_I(TM)  \mapsto     \Gamma_I (E_{+1}) \mapsto \Gamma_I (E_{+2})   \mapsto \cdots,  
\end{equation}
recall Equation \eqref{eq:GammaI} for the notation $\Gamma_I$.

\begin{deff}
\label{deff:deltalin}
Let $(M_0,\cO,\delta) $ be a Koszul-Tate resolution of an ideal $I \subset  C^\infty(M_0)$.
We call \emph{linearization} of Koszul-Tate differential $\delta $ at the zero locus the complex \eqref{eq:deltalin}, 
and denote it by $(\fX_{lin}, \delta_{lin}) $.
\end{deff}

\begin{rem}
\normalfont
The complex \eqref{eq:deltalin} can be understood as follows when $I$ is the vanishing ideal of a subset $X_I \subset M_0 $: the differential of the curvature $\kappa: M_0 \to E_{+1} $ is a vector bundle morphism:
 $$ T\kappa \colon TM_0 \to TE_{+1}$$
 over $\kappa :M_0 \to  E_{+1}$
Now, for any $m \in X_I$, since $\kappa(m)=0_m$, there is a canonical decomposition $ T_{0_m} E_{+1} = T_{m}M + E_{+1}|_m$, $ pr_2 \circ T_m\kappa$ can be seen as a linear map $  T_m M \to E_{+1}|_m $, where $pr_2 $ being the projection onto the second component. 
This map easily checked to coincide with the first bundle morphism in \eqref{eq:deltalin}.
All remaining morphisms in \eqref{eq:deltalin} are simply the restriction to $X_I$ of the component of polynomial degree $0$ of $\delta $ (which is by construction a degree $ +1$ vector bundle endomorphism of $E_\bullet $).
$\square$ 
\end{rem}

Let us now state the main result of this section.
For a Koszul-Tate resolution, recall that the negative degree is simply the opposite of the total degree. Also,  ${\mathrm{ad}}_\delta $ is a negative degree $-1$ operator. Also, the following remark helps to understand the statement.

\begin{rem}
\normalfont
\label{rem:deg0}
A ${\mathrm{ad}}_\delta $-cocycle $q$ of degree $ 0$ induces a vector field $\underline q \in \mathfrak X(M_0)$ which satisfies $\underline q [I] \subset I$, and therefore induces a derivation $q^I $ of $C^\infty(M_0) /I$.
If $q$ is an 
${\mathrm{ad}}_\delta $-coboundary, then $q^I=0$.
$\square$ \end{rem}

\begin{theorem}
\label{th:cohomOfVectorFields}
Let $(M_0,\cO_-,\delta)$ be a  Koszul-Tate resolution of an ideal $I \in C^\infty(M_0) $. With respect to the negative degree, we have:
\beq\label{eq:Koszul-Tate-NEW}
H^{-i} (\mathfrak X(\cO_-), {\mathrm{ad}}_{{\de}})=
\left\{
\be{cc}
H^{-i}(\mathfrak X_{lin}, \delta_{lin} ), & i \leq 0\\
0, & i > 0
\ee
\right.
\eeq
In particular, an ${\mathrm{ad}}_{{\de}}$-cocycle $q$ of degree $0$ is a coboundary if and only if its induced derivation $q^I $ of $ C^\infty(M_0)/I$ is zero.
\end{theorem}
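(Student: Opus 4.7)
The plan is to exhibit an $\mathrm{ad}_\delta$-stable subcomplex of $\mathfrak{X}(\cO_-)$ whose quotient recovers $(\mathfrak{X}_{lin}, \delta_{lin})$, and then to show that this subcomplex is acyclic by a homological perturbation argument off the Koszul--Tate complex. Concretely, the subspace $\mathfrak{N} := (\cO_-\,\delta(\mathcal I_-) + \mathcal I_-)\,\mathfrak{X}(\cO_-)$ is $\mathrm{ad}_\delta$-stable, and the quotient is canonically $(\mathfrak{X}_{lin}, \delta_{lin})$. Using Lemma \ref{lem:ideal_made_easy} together with $\delta(I) = 0$ --- which holds because $\cO_-$ is concentrated in non-positive degree while $\delta$ raises total degree by one --- one rewrites $\mathfrak{N} = (I + \mathcal I_-)\mathfrak{X}(\cO_-)$. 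The associated long exact sequence then reduces the theorem to the assertion that $(\mathfrak{N}, \mathrm{ad}_\delta)$ is acyclic: once this holds, the first clause becomes a quasi-isomorphism, and the vanishing clause follows automatically because $\mathfrak{X}_{lin}$ is concentrated in non-negative total degree.

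To prove the acyclicity, I would fix a Batchelor splitting and identify $\mathfrak{X}(\cO_-) \cong \cO_- \otimes_{C^\infty(M_0)} \tilde V$ as a graded $\cO_-$-module with $\tilde V := \Gamma(TM_0) \oplus \bigoplus_{l \geq 1} \Gamma(E_{+l})$. Writing $\mathrm{ad}_\delta = D_0 + D_1$, where $D_0 := \delta \otimes \mathrm{id}_{\tilde V}$ acts only on the coefficients and $D_1 := \sum_\alpha \pm F_\alpha[\delta,\partial_\alpha]$ collects the remaining commutators with the basis vector fields, the complex $(\cO_- \otimes \tilde V, D_0)$ is a direct sum of copies of the Koszul--Tate complex. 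Its cohomology, by Definition \ref{def:KoszulTateResol}, is $(C^\infty(M_0)/I) \otimes \tilde V$, concentrated in non-negative total degree and identified (as graded module) with $\mathfrak{X}_{lin}$. The homological perturbation lemma, applied with the homotopy $h \otimes \mathrm{id}_{\tilde V}$ built from a Koszul--Tate contracting homotopy $h$ for $(\cO_-, \delta)$, will transport $\mathrm{ad}_\delta$ to a differential on $\mathfrak{X}_{lin}$ that a direct check on generators identifies with $\delta_{lin}$. This yields the desired quasi-isomorphism $(\mathfrak{X}(\cO_-), \mathrm{ad}_\delta) \simeq (\mathfrak{X}_{lin}, \delta_{lin})$, from which the ``in particular'' statement drops out at once: a degree-$0$ cocycle $q$ maps to $q^I \in \Gamma_I(TM_0) = (\mathfrak{X}_{lin})_0$, which is a coboundary of $\delta_{lin}$ precisely when it vanishes.

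The main obstacle will be making the perturbation step rigorous --- securing a contracting homotopy $h$ with good filtered behaviour and checking convergence of the Neumann series $\sum_k ((h \otimes \mathrm{id}) D_1)^k$. A self-contained way to sidestep both difficulties would be to bypass the perturbation lemma and run instead the spectral sequence of the $(I+\mathcal I_-)$-adic filtration $G^p := (I + \mathcal I_-)^p \mathfrak{X}(\cO_-)$ directly on $\mathfrak{N}$: this filtration is $\mathrm{ad}_\delta$-stable (since $\delta(I) = 0$ and $\delta(\mathcal I_-) \subset I + \mathcal I_-$ by Lemma \ref{lem:ideal_made_easy}) and exhaustive in each fixed total degree, so convergence is automatic; the acyclicity of $(\cO_-,\delta)$ in negative degree will then translate into vanishing of the $E_\infty$-page of $\mathfrak{N}$, which is the form in which I would ultimately present the argument.
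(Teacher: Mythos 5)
Your reduction is sound, and it is in fact the route the paper itself sketches in the remark following Remark \ref{rem:extension_of_vfields}: your $\mathfrak{N}=(\cO_-\,\delta(\mathcal I_-)+\mathcal I_-)\,\fX(\cO_-)=(I+\mathcal I_-)\,\fX(\cO_-)$ (correct, by Lemma \ref{lem:ideal_made_easy} together with $\langle\kappa\rangle=I$ and $\delta(I)=0$) is exactly the paper's $\fX_{\sst\mathrm{null}}$, the quotient is the complex \eqref{eq:deltalin}, and the long exact sequence reduces everything to acyclicity of $(\mathfrak{N},\mathrm{ad}_\delta)$, the non-negative concentration of $\fX_{lin}$ giving the vanishing clause for free. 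Your perturbation sketch is also morally the paper's actual proof: there, connections $\nabla^k$ produce the bigrading \eqref{eq:bigrading} in which the component $\delta\otimes\mathrm{id}$ of $\mathrm{ad}_\delta$ is acyclic off the row $a=0$ (each column is the Koszul--Tate complex tensored with a fixed projective module, and $(\cO_-,\delta)$ is a projective resolution of $C^\infty(M_0)/I$, which also supplies the contracting homotopy $h$ you need), while \emph{all} remaining components of $\mathrm{ad}_\delta$ strictly lower the column index $b$; the diagram chase is legitimate because $\fX(\cO_-)$ is complete for the column filtration, and this same filtration is what makes your Neumann series $\sum_k\bigl((h\otimes\mathrm{id})D_1\bigr)^k$ converge.

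The genuine gap is in the argument you say you would ultimately present. The spectral sequence of $G^p:=(I+\mathcal I_-)^p\,\fX(\cO_-)$ does not converge ``automatically'': since $I$ sits in total degree $0$, the filtration is neither bounded nor Hausdorff in a fixed total degree. In the smooth category one can have $\bigcap_p I^p\neq\{0\}$ --- take $M_0=\R$, $I$ the ideal of functions vanishing at $0$, and $e^{-1/x^2}\in\bigcap_p I^p$; for the Koszul--Tate resolution $\delta=x\,\tfrac{\partial}{\partial\xi}$, the cocycle $e^{-1/x^2}\tfrac{\partial}{\partial\xi}\in\mathfrak{N}$ lies in $G^p$ for every $p$ and is invisible on every page, so vanishing of $E_\infty$ would say nothing about it (exhaustiveness of a decreasing filtration buys nothing; one needs completeness and Hausdorffness, which fail here). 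Moreover, even the $E_1$-page is problematic: the induced differential on the associated graded is the $I$-adic symbol of $\delta$, and the Koszul--Tate hypothesis \eqref{eq:Koszul-Tate} concerns $(\cO_-,\delta)$ itself, not its $I$-adic associated graded, which is not a resolution for general (non-quasi-regular) ideals --- and Tate's construction covers all ideals, so you cannot assume regularity. The repair is to filter in the fiber directions only, by polynomial degree in the generators $\oplus_{i\geq1}\Gamma(E_i^*)$ (equivalently, the paper's filtration by vector fields annihilating all functions generated by elements of negative degree $<p$): for that filtration $\mathrm{gr}$-acyclicity off the top row \emph{is} the Koszul--Tate property, completeness holds, and your homological perturbation then goes through exactly as in the paper's bigraded diagram chase.
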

\begin{proof}
Let us chose a splitting $\cO_- \simeq \Gamma\left(S(\oplus_{i \geq 1} E_i^*) \right) $, and a family of affine connections $\nabla^k  $ on $ E_k^*$ (recall Remark \ref{rem:indexbeforestar} for notations). 
Consider the following bigrading (on the ``North-West'' quarter):
\begin{equation} \label{eq:bigrading}
    \mathfrak X ( \cO_-)_{a,b} = \left\{ \begin{array}{ll} \cO_a \otimes \mathfrak X (M_0)  &
\hbox{ for $a \geq 0$ and $b=0$,} \\
 \cO_{a} \otimes \Gamma(E_{-b}) & \hbox{ for $a \geq 0$ and $b\leq -1$,} \\ 0 & \hbox{ otherwise.} \end{array}\right. 
 \end{equation} 

We adopt the following conventions:
\begin{enumerate}
    \item 
All tensor products are over $C^\infty(M_0) $. 
\item  A section $ e \in \Gamma(E_b)$ is seen as the vertical vector field given by the derivation $\mathfrak i_e $ of $\cO_-$. Notice that for the negative degree, it is of degree $ -b$.
\item A vector field $X \in \mathfrak X (M_0) $ is extended to a degree $0$ derivation of $\cO_- $ by $X[\epsilon_k] = \nabla^k_X \epsilon_k  $ for every section $\epsilon_k    \in \Gamma (E_k^*)$.
\end{enumerate}
It is indeed a bigrading, since:
 $$  \mathfrak X (\cO_-)_i = \oplus_{a\geq 0} \mathfrak X (\cO_-)_{a,i-a} ,$$
 (infinite sums are allowed, since they converge with respect to the filtration $(F^i \cO)_{i \geq 0}$).

With respect to this bi-grading,
${\mathrm{ad}}_{\delta}   $ decomposes as follows:
 $$ 
 \xymatrix{\mathfrak X ( \cO_-)_{a+2,b-3}   & & &  \\ & \mathfrak X ( \cO_-)_{a+1,b-2}  & & \\
 & & \mathfrak X ( \cO_-)_{a,b-1}  &\mathfrak X ( \cO_-)_{a,b}
 \ar[d]^{ \delta \otimes {\mathrm{id}}}
 \ar[l]^{  {\mathrm{id} \otimes D }}
 \ar[llu] \ar[llluu]
 \\
 & & & \mathfrak X ( \cO_-)_{a-1,b} 
 } 
 $$
 We can now use generic diagram chasing arguments:
since all vertical lines are acyclic in degree $\neq 0 $, the cohomology is concentrated in the $0$-th cohomology of the line $a=0 $, which coincides with 
 $ \Gamma_I(TM_0) $ for $b=0$ and 
 $ \Gamma_I(E_b) $ for $b \leq -1$. 
Equipped with the induced differential, a direct computation shows that it
 coincides (with opposite signs) with the differential of the complex \eqref{eq:deltalin}. This proves \eqref{eq:Koszul-Tate-NEW}.

 Since all vertical lines are exact, a degree $0$ ${\mathrm{ad}}_\delta$-cocycle is exact if and only if its bi-degree $(0,0) $ component lies in the image of the vertical lines, i.e. belong to $I \otimes \mathfrak X(M_0) $.
 Equivalently, this means that this element induces the zero map on $C^\infty(M_0)/I $.
 This completes the proof.
 $\blacksquare$
\end{proof}

Here is an immediate consequence of Theorem
 \ref{th:cohomOfVectorFields}.

\begin{cor}
\label{cor:cohomologyAdDeltaK}
Let $(M_0,\cO,\tilde{\delta})$ be a Koszul-Tate resolution of $\mathcal K_+ $ as in Definition \ref{def:KoszulTateResolK}.
With respect to the negative degree: 
\beq\label{eq:Koszul-Tate-NEW2}  
H^i (\mathfrak X(\cO), {\mathrm{ad}}_{\tilde{\de}})=
\left\{
\be{cc}
\mathcal K_+\otimes_{C^\infty(M_0)/I} H^{-i}(\mathfrak X_{lin}, \delta_{lin} ), & i < 0\\ \mathcal K_+\otimes_{C^\infty(M_0)/I}\left( \oplus_{i \geq 1}\Gamma_I( E_{-i}) \oplus   H^{0}(\mathfrak X_{lin}, \delta_{lin} ) \right), & i=0\\
0, & i>0
\ee
\right.
\eeq
\end{cor}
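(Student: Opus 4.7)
The plan is to reduce the statement to Theorem \ref{th:cohomOfVectorFields} by a K\"unneth-type argument after decomposing $\cO$ according to the bi-factor structure provided by a splitting. Fix a splitting compatible with Definition \ref{def:KoszulTateResolK}, writing the generators of $\cO$ as $(x^j, \xi^\beta_k, \theta^\alpha_k)$, where the $x^j$ are base coordinates on $M_0$, the $\xi^\beta_k \in \Gamma(E_{+k}^*)$ ($k\ge 1$) are the negative-degree generators (so $\cO_-$ is the sub-sheaf they generate) and the $\theta^\alpha_k \in \Gamma(E_{-k}^*)$ ($k\ge 1$) are the positive-degree generators (so $K := \Gamma(S(\oplus_{k\ge 1}E_{-k}^*))$ is the sub-algebra they generate). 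Then $\cO \simeq \cO_- \mathbin{\hat\otimes}_{C^\infty(M_0)} K$, $\tilde\delta = \delta\otimes\mathrm{id}_K$, and $\cK_+ = K\otimes_{C^\infty(M_0)} C^\infty(M_0)/I$. Every derivation of $\cO$ decomposes uniquely as $X = X_{KT} + X_K$, where $X_{KT}$ is an $\cO$-linear combination of the $\partial_{x^j}, \partial_{\xi^\beta_k}$ and $X_K$ is an $\cO$-linear combination of the $\partial_{\theta^\alpha_k}$. The key observation is that $[\tilde\delta, \partial_{\theta^\alpha_k}] = 0$: both sides are derivations annihilating every generator, since $\tilde\delta$ does not involve any $\theta$ and $\partial_{\theta^\alpha_k}$ does not involve any $x$ or $\xi$. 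Hence $\mathrm{ad}_{\tilde\delta}$ preserves both summands, and we compute the cohomology of each separately.

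For the $\mathfrak X_K$ piece, the identity $\mathrm{ad}_{\tilde\delta}(h\, \partial_{\theta^\alpha_k}) = \tilde\delta(h)\, \partial_{\theta^\alpha_k}$ identifies $(\mathfrak X_K, \mathrm{ad}_{\tilde\delta})$ with the complex $(\cO,\tilde\delta) \otimes_{C^\infty(M_0)} \Gamma(\oplus_{k\ge 1} E_{-k})$ carrying the trivial differential on the second factor (the $\partial_\theta$-directions globalize to $\Gamma(\oplus_k E_{-k})$). Lemma \ref{lem:cohomologyDeltaK} yields cohomology $\cK_+ \otimes_{C^\infty(M_0)} \Gamma(\oplus_k E_{-k}) \simeq \cK_+ \otimes_{C^\infty(M_0)/I} \oplus_{k\ge 1}\Gamma_I(E_{-k})$, concentrated in negative degree $0$. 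For the $\mathfrak X_{KT}$ piece, restriction to $\cO_- \subset \cO$ gives an isomorphism $\mathfrak X_{KT} \simeq \Der_\R(\cO_-, \cO) \simeq K \mathbin{\hat\otimes}_{C^\infty(M_0)} \mathfrak X(\cO_-)$ (using local freeness of $\Omega^1_{\cO_-/\R}$ and $\cO \simeq \cO_- \mathbin{\hat\otimes} K$), under which $\mathrm{ad}_{\tilde\delta}$ becomes $\mathrm{id}_K \otimes \mathrm{ad}_\delta$. Since $K$ is a polynomial (hence flat) $C^\infty(M_0)$-algebra, the K\"unneth formula and Theorem \ref{th:cohomOfVectorFields} give $H^*(\mathfrak X_{KT}) \simeq K \otimes_{C^\infty(M_0)} H^{-\bullet}(\mathfrak X_{lin},\delta_{lin}) \simeq \cK_+ \otimes_{C^\infty(M_0)/I} H^{-\bullet}(\mathfrak X_{lin},\delta_{lin})$, the last step because $H^*(\mathfrak X_{lin})$ is already a $C^\infty(M_0)/I$-module. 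Summing the two pieces yields the stated formula.

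The main obstacle is justifying the K\"unneth step in the presence of the negative-filtration completion built into $\cO_-$ and hence into $\mathfrak X(\cO_-)$. As in the proof of Lemma \ref{lem:cohomologyDeltaK}, the negative filtration induces a convergent spectral sequence whose $E_1$-page is the uncompleted K\"unneth answer and which collapses there; the fact that $K$ itself carries no completion (being a polynomial algebra in generators of strictly positive degree) keeps the bookkeeping manageable. A secondary task is to match the negative-degree grading of the corollary with the grading in which Theorem \ref{th:cohomOfVectorFields} is stated, via the identification $\mathrm{deg}_- = -\mathrm{deg}$ valid on $\cO_-$; once this is in place, the $\mathfrak X_{KT}$ contribution at $H^i$ for $i<0$ pairs with $H^{-i}(\mathfrak X_{lin})$ as required, and the splitting-dependent decomposition $\mathfrak X(\cO) = \mathfrak X_{KT}\oplus\mathfrak X_K$ is seen \emph{a posteriori} to produce a splitting-independent cohomology.
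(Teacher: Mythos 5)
Your proof is correct and takes essentially the same route as the paper's: your decomposition $\mathfrak X(\cO)=\mathfrak X_{KT}\oplus\mathfrak X_{K}$ is precisely the paper's splitting of $\mathfrak X(\cO)$ into $\cO\otimes\left(TM_0\oplus\bigoplus_{i\geq 1}E_{+i}\right)$ and $\cO\otimes\bigoplus_{i\geq 1}E_{-i}$ (the paper builds it with a family of connections where you use local coordinate derivations and gluing), with the $\mathfrak X_K$ piece computed via Lemma \ref{lem:cohomologyDeltaK} since $\mathrm{ad}_{\tilde\delta}=\tilde\delta\otimes\mathrm{id}$ there, and the $\mathfrak X_{KT}$ piece identified with the completed tensor product of $\Gamma(S(\oplus_{i\geq 1}E_{-i}^*))$ with $\mathfrak X(\cO_-)$ and handled by Theorem \ref{th:cohomOfVectorFields} together with flatness/projectivity to pass the tensor factor through cohomology. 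Your added spectral-sequence justification that the completion does not disturb the K\"unneth step is a more explicit version of what the paper leaves implicit, not a different argument.
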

\begin{proof}
As in the proof of Theorem \ref{th:cohomOfVectorFields}, one can use a family of connections on $(E_i)_{i \in \mathbb Z^*} $ to decompose
the $\cO$-module $\mathfrak X (\cO)  $  as the sum of two  submodules:
one is $$ \cO \otimes \left( \oplus_{i \geq 1} {E_{i}} \oplus TM_0 \right) \hbox{ and }   \cO \otimes \left( \oplus_{i \geq 1} {E_{-i}}\right) $$
Both modules are ${\mathrm{ad}}_{\tilde{\de}}  $-stable. On the second one,  ${\mathrm{ad}}_{\tilde{\de}} = \tilde{\delta} \otimes {\mathrm{id}}$, so that the cohomology is concentrated in negative degree $0$ and coincides with 
$$ \frac{\Gamma(S(\oplus_{i \geq 1} E_{-i}^* ))}{I} \otimes_{C^\infty(M_0)} \Gamma(\oplus_{i \geq 1} E_{-i}) = \mathcal K_+ \otimes_{C^\infty(M_0)/I} \Gamma_I(\oplus_{i \geq 1} E_{-i}).$$ The first one is the completion of the tensor product of $ \Gamma(S(\oplus_{i \geq 1} E_{-i}^* )) $  with the module $\mathfrak X(\cO_-) $
of vector fields on a Koszul-Tate resolution of $I$, whose cohomology is given in Theorem \ref{th:cohomOfVectorFields}. The differential being given by $ {\mathrm {id}} \otimes {\mathrm{ad}}_{\tilde{\de}}$,
the result then follows from Theorem \ref{th:cohomOfVectorFields} and the fact that $\mathcal K_+$ is a $C^\infty(M_0)/I$-projective module, so that tensoring with $\mathcal K_+ $ preserves cohomology. 
$\blacksquare$
\end{proof}

\subsection{Vector fields on Koszul-Tate resolutions II: the extension}

We now consider another problem. As stated in Remark \ref{rem:deg0},
for a Koszul-Tate resolution $(M_0,\cO_-,\delta) $ of an ideal $I$,
 an ${\mathrm{ad}}_{\delta} $-cocycle $q \in \mathfrak X(\cO_-)$ induces a derivation $q^I$ of $C^\infty(M_0)/I$, and an
  ${\mathrm{ad}}_{\delta} $-coboundary 
  induces a derivation equal to zero. In particular, there is a Lie algebra morphism
   \begin{equation}\label{eq:restrictionIsOk} H^0 ( \mathfrak X (\cO_-) , {\mathrm{ad}}_{\delta} )  \longrightarrow {\mathrm{Der}}( C^\infty(M_0)/I) .\end{equation}
   The second part of Theorem \ref{th:cohomOfVectorFields}
implies that this morphism is injective. The following statement shows that it is surjective.

\begin{prop}
\label{prop:der}
Let $(M_0,\cO_-,\delta) $ be a Koszul-Tate resolution of an ideal $I \subset  C^\infty(M_0)$. Then the natural Lie algebra morphism \ref{eq:restrictionIsOk} is an isomorphism  
 $$H^0 ( \mathfrak X (\cO_-) , {\mathrm{ad}}_{\delta} ) \, \simeq \,  {\mathrm{Der}}( C^\infty(M_0)/I). $$
 
In particular, every derivation $q^I$ of $C^\infty(M_0) /I$ is induced by a degree $0$ vector field $q \in \mathfrak X(\cO_-)$ such that $[\delta, q]=0 $.
\end{prop}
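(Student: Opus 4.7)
Injectivity of the Lie algebra morphism \eqref{eq:restrictionIsOk} is the last assertion of Theorem~\ref{th:cohomOfVectorFields}, so we concentrate on surjectivity: given $q^I\in\Der(C^\infty(M_0)/I)$, we must construct a degree $0$ vector field $q\in\mathfrak X(\cO_-)$ with $[\delta,q]=0$ inducing $q^I$. The strategy is to first produce an approximate lift $q_0$ that realizes $q^I$ modulo $I$ but is not yet $\delta$-closed, and then to kill the obstruction $[\delta,q_0]$ by means of the cohomology vanishing of Theorem~\ref{th:cohomOfVectorFields}.

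For the first step, lift $q^I$ to a vector field $X_0\in\mathfrak X(M_0)$ with $X_0[I]\subseteq I$. In a chart $U$ where $I|_U$ is finitely generated by $g_1,\dots,g_r$, pick functions $h_i\in C^\infty(U)$ representing $q^I(\overline{x_i})$ and set $D:=\sum h_i\partial/\partial x_i$; the identity $\overline{D(g_j)}=q^I(\overline{g_j})=0$ forces $D(g_j)\in I$, so $D$ preserves $I$ and reduces to $q^I$. Glue the local lifts through a partition of unity into a global $X_0$. Then fix a splitting $\cO_-\simeq \Gamma\bigl(S(\oplus_{k\ge 1}E_k^*)\bigr)$ together with affine connections $\nabla^k$ on each $E_k^*$, and extend $X_0$ to a degree $0$ derivation $q_0$ of $\cO_-$ by setting $q_0(\epsilon):=\nabla^k_{X_0}\epsilon$ for $\epsilon\in \Gamma(E_k^*)$ and applying the graded Leibniz rule. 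Since $X_0$ preserves $I$, the derivation $q_0$ preserves the graded ideal $K:=I\cdot\cO_-+\mathcal I_-$ and reduces to $q^I$ on $\cO_-/K=C^\infty(M_0)/I$.

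The graded commutator $\Theta:=[\delta,q_0]$ is an $\mathrm{ad}_\delta$-cocycle by the Jacobi identity. A direct check on generators shows that $\Theta$ lies in the sub-DGLA $J\subset\mathfrak X(\cO_-)$ of vector fields preserving $K$: $\Theta$ kills $C^\infty(M_0)$ because $\delta$ does; on $\Gamma(E_1^*)$ it produces elements of $I$ because $X_0[I]\subseteq I$ together with $\delta(\Gamma(E_1^*))\subseteq I$; on $\Gamma(E_k^*)$ with $k\ge 2$ it lands in $\mathcal I_-$ by degree reasons. With respect to the negative degree used in Theorem~\ref{th:cohomOfVectorFields}, $q_0$ has negative degree $0$ and $\Theta$ has negative degree $-1$, so the vanishing $H^{-1}(\mathfrak X(\cO_-),\mathrm{ad}_\delta)=0$ provided by that theorem yields $\Theta=[\delta,h]$ for some $h\in\mathfrak X_0(\cO_-)$. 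Setting $q:=q_0-h$ then gives a $\mathrm{ad}_\delta$-cocycle whose induced derivation on $C^\infty(M_0)/I$ equals $q^I$, provided $h$ can be chosen inside $J$.

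The main obstacle is precisely this last clause: upgrading the vanishing $H^{-1}=0$ to its analogue \emph{inside} the subcomplex $J$, so that the primitive $h$ does not spoil the reduction of $q_0$ modulo $I$. This is a filtered refinement of Theorem~\ref{th:cohomOfVectorFields}: its proof proceeds via a bi-graded diagram chase whose vertical exactness comes from Koszul--Chevalley type lines built from the connections $\nabla^k$, a $C^\infty(M_0)$-linear statement that is inherited by the pair $(\cO_-,K)$. Running the same chase inside $J$ produces the desired primitive $h\in J$, and $q=q_0-h$ is the sought $\mathrm{ad}_\delta$-cocycle lifting $q^I$.
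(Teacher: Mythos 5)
Your first half --- lifting $q^I$ to a vector field $X_0$ on $M_0$ preserving $I$ via local coordinates and a partition of unity, then extending it to a degree $0$ derivation $q_0$ of $\cO_-$ by means of connections --- is exactly the paper's opening move ($X_0$ is the paper's $\nu^0$), and it is fine. The gap is in the second half. First, the vanishing you invoke is not available: $\Theta=[\delta,q_0]$ has total degree $+1$, i.e.\ negative degree $-1$, and in that range Theorem \ref{th:cohomOfVectorFields} identifies the cohomology of $(\mathfrak X(\cO_-),\mathrm{ad}_\delta)$ with $H^{1}(\mathfrak X_{lin},\delta_{lin})$, which is nonzero in general --- Example \ref{ex:complete_intersection} exhibits exactly such a class; the vanishing part of the theorem concerns the opposite range (strictly positive negative degree). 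This mis-citation happens to be harmless only because the conclusion you draw from it is vacuous: $\Theta=[\delta,q_0]$ is a coboundary by construction, with the useless primitive $h=q_0$. The entire content of the proposition is therefore concentrated in the clause you defer to the end, namely that a primitive $h$ can be found among the vector fields inducing the \emph{zero} derivation on $C^\infty(M_0)/I$ (note that ``preserving $K$'' is not enough: if $h$ merely preserves $K$ then $q_0-h$ induces $q^I-h^I$). What is needed is the acyclicity in total degree $+1$ of the subcomplex $\mathfrak X_{\mathrm{null}}$ of derivations valued in $I\cO_-+\mathcal I_-$; this is precisely the alternative route sketched in the paper's remark following Corollary \ref{cor:extension}, and it must be proved, not inherited from Theorem \ref{th:cohomOfVectorFields} by saying ``run the same chase inside $J$''. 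As written, your argument stops exactly where the work begins.

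For comparison, the paper never has to leave the good subcomplex: it builds $q=\sum_{j\ge0}\nu^j$ directly, where each correction $\nu^j$ with $j\ge1$ lies in $\bigoplus_{b\le -j}\mathfrak X(\cO_-)_{a,b}$, hence is vertical and cannot perturb the induced derivation on $C^\infty(M_0)/I$; the solvability of each recursion step $\nu^j\circ\delta=\delta\circ\nu^{j+1}$ uses only the acyclicity of $(\cO_-,\delta)$ in negative degrees together with the projectivity of $\cO_-$ over $C^\infty(M_0)$. If you prefer to keep the cohomological formulation, the cleanest repair is to deduce $H^\bullet(\mathfrak X_{\mathrm{null}},\mathrm{ad}_\delta)=0$ from the long exact sequence of $0\to\mathfrak X_{\mathrm{null}}\to\mathfrak X(\cO_-)\to\mathfrak X_I\to 0$, using that the isomorphism of Theorem \ref{th:cohomOfVectorFields} is induced by the restriction map, and then apply that vanishing to $\Theta$, which you correctly checked to lie in $\mathfrak X_{\mathrm{null}}$.
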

 \begin{proof}
 Denote the projection $C^\infty(M_0) \to  C^\infty(M_0)/I $ by $F \mapsto \overline{F}$.
Also, let us choose $(U_k,\chi_k)_{k \in K} $ a partition of unity of the manifold $M_0$ for which each $U_k $ is a coordinate neighborhood on which each one of the vector bundles $E_k$ admits a trivialization.

 Let $q^I $ be a derivation of $C^\infty(M_0)/I $.
 Let $x_1, \dots, x_n $ be local coordinates on the open subset $U_k$ for some $k \in K$. Consider any functions $F_1, \dots, F_r \in C^\infty(U_k) $ such that $q^I(\overline{x_i}) = \overline{F_i} $. The vector field $$ \nu_k^0 := \sum_{i=1}^r F_i \frac{\partial}{\partial x_i} $$ 
satisfies by construction that $\nu_k^0 (I) \subset I $, since it induces the derivation of $C^\infty(U_k)/I $ which coincides with the restriction of $q^I $ to $U_k$. These local vector fields $\nu^0_k$ can be glued  to a vector field $\nu^0$ on $M_0$:
$$ \nu^0 = \sum_{k \in K} \chi_k \nu^0_{k}.  $$
 This vector field still satisfies $ \nu^0 [I] \subset I$ by construction.

Since $ I = \delta(\Gamma(E_{+1}^*))$, for any local trivialization $\eta_1, \dots, \eta_r $ of $E_{+1}^* $, 
defined on the open subset $U_k$, there exist functions $ (\phi_{i,j})_{i,j=1}^r$ in $ C^\infty(U_k)$ such that the collection of functions
$\kappa_i:=\de (\eta_i)$, $i=1, \ldots, r$ locally generates the vanishing ideal of the zero locus and
  $$ \nu^0 \delta (\eta_i)=\nu^0 (\kappa_i) = \sum_{j=1}^r \phi_{i,j} \kappa_j=\sum_{j=1}^r \phi_{i,j} \delta(\eta_j)    $$
  Consider the vector field 
   $$ \nu^1_k := \sum_{i,j=1}^r \phi_{j,i}\eta_i \frac{\partial }{\partial \eta_j} .$$
By construction, it satisfies
 $$ \nu^0 \circ \delta = \delta \circ \nu^1_{k}.$$
 Since $\delta$ is $ C^\infty(M_0)$-linear, the vector field
  $$  \nu^1 := \sum_{k \in K} \chi_k\nu^1_{k}  $$
  also satisfies:
   $$
    \nu^0 \circ \delta = \delta \circ \nu^1.
    $$
    Now, $\nu^0,\nu^1 $ extends to vector fields on $\cO_- $, that we will denote by the same symbol. 
The proof then consists in constructing recursively $\nu^j \in \bigoplus_{b\leq-j}\mathfrak X ( \cO_-)_{a,b}$ such that
 $$ \nu^j \circ \delta = \delta \circ \nu^{j+1} .$$
 Provided that $\nu_0, \dots, \nu_j $ are constructed, the existence of $\nu_{j+1} $ follows from the fact that $\nu^j \circ \delta$  is valued in the kernel of $\delta $:
  $$ \delta  \circ \nu^j \circ \delta = \nu^{j-1} \circ \delta^2= 0 .$$
  Since the cohomology of $ (\cO_-,\delta)$ is zero, $\nu^j \circ \delta$ is therefore valued in the image of $\delta $, and since $\cO_-$ is a projective $C^\infty(M_0)$-module, the existence of the vector field $\nu^{j+1}$ is granted. 
Moreover, with respect to the bi-grading above (see Equation \eqref{eq:bigrading}),\\
 $\nu^{j+1 }\in \bigoplus_{b\leq-(j+1)}\mathfrak X ( \cO_-)_{a,b}$. 
  As a consequence, the sequence
\beq\label{eq:nu_1}
q^k := \sum_{j=0}^k \nu^j,
\eeq
converges and the limit is the desired vector field $q$.
$\blacksquare$
 \end{proof}

\vspace{.3cm}
 
 Consider now a Koszul-Tate resolution of a graded variety given by $\mathcal K_+$ as 
in Definition \ref{def:KoszulTateResolK}.
Again, notice that a total degree $k$ and negative degree $0$ vector field $q_0$ such that $[\tilde{\de},q_0]=0 $ induces a degree $k$ derivation of $\mathcal K_+ $. 
If $q_0$ is an ${\mathrm{ad}}_{\tilde{\de}}$-cocycle, that derivation is zero. Proposition \ref{prop:der} extends easily to give the following result.

\begin{cor}
\label{cor:extension}
Let $(M_0,\cO,\tilde{\delta})$ be a Koszul-Tate resolution of a graded variety $\mathcal K_+ $ as in Definition \ref{def:KoszulTateResolK}.
There is a natural isomorphism:
$$ H^{(0,k)} \left(\mathfrak X(\cO), {\mathrm{ad}}_{\tilde{\delta}}\right) = {\mathrm{Der}}^k(\mathcal K_+) $$
where $H^{(0,k)}$ stands for the cohomology in negative degree $0$ and total degree $k$ and ${\mathrm{Der}}^k$ stands for derivations of degree $k$ of $\mathcal K_+ $. 
In particular, for any degree $k$ derivation $Q_+$ of $\mathcal K_+ $, 
 there exists $q_0 \in \mathfrak X (\mathcal O)$ of negative degree $0$ and total degree $k$ satisfying $[\tilde{\de},q_0]=0 $ and inducing the derivation $Q_+$ on $\mathcal K_+$.  
\end{cor}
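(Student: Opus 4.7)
The plan is to construct a natural map
\[
\Phi \colon H^{(0,k)}\bigl(\mathfrak X(\cO), \mathrm{ad}_{\tilde\delta}\bigr) \longrightarrow \mathrm{Der}^k(\mathcal K_+)
\]
and to prove it is a bijection, with surjectivity being the main content. For the definition of $\Phi$: a cocycle $q$ of bidegree $(0,k)$ preserves $\mathcal I_-$ automatically (as its negative degree is non-negative), and, using $[\tilde\delta, q] = 0$, it also preserves $\cO\tilde\delta(\mathcal I_-)$, so descends to $\bar q \in \mathrm{Der}^k(\mathcal K_+)$. If $q = [\tilde\delta, v]$ with $v$ of bidegree $(1, k-1)$, then on any negative-degree-$0$ representative $F$ of $\mathcal K_+$ one has $\tilde\delta F = 0$ (by Definition \ref{def:KoszulTateResolK}, $\tilde\delta$ vanishes on $C^\infty(M_0)$ and on the positive-degree generators $\Gamma(E_{-i}^*)$), whence $q(F) = \tilde\delta(v(F)) \in \tilde\delta(\mathcal I_-) \subset \mathcal J$, so $\bar q = 0$ and $\Phi$ is well-defined.

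Surjectivity is handled by the layer-by-layer procedure of Proposition \ref{prop:der}. Fix a splitting $\cO \simeq \tilde S(\bigoplus_{i \neq 0}\Gamma(E_i^*))$ and, given $Q_+ \in \mathrm{Der}^k(\mathcal K_+)$, choose local lifts of the values $Q_+(\bar x_a)$ and $Q_+(\bar\xi^{(i)}_b)$ on coordinates of $M_0$ and basis sections of $\Gamma(E_{-i}^*)$, glue them via a partition of unity over trivializing charts, and extend by derivation, setting the result to $0$ on every negative-degree generator in $\Gamma(E_j^*)$, $j \geq 1$. The resulting vector field $\nu^0$ has bidegree $(0,k)$ and tautologically reduces to $Q_+$ on $\mathcal K_+$. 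Its defect $R^0 := [\tilde\delta, \nu^0]$ vanishes on $C^\infty(M_0)$ and on the positive-degree generators, so only acts in the directions of negative-degree generators. One then constructs inductively $\nu^j$ for $j \geq 1$, each of total degree $k$, vanishing on all positive-degree generators, and involving $j$ extra powers of negative-degree generators (so that $\nu^j : \cO \to F^j\cO$), such that the partial sums $\sum_{i \leq n}\nu^i$ commute with $\tilde\delta$ modulo $F^{n+1}\cO$. The existence of $\nu^{j+1}$ from $\nu^j$ reduces to a $\tilde\delta$-lifting problem in the appropriate filtration piece, solvable by Lemma \ref{lem:cohomologyDeltaK} (acyclicity of $(\cO,\tilde\delta)$ in positive negative degree) together with the projectivity of $\Gamma(E_\bullet^*)$ over $C^\infty(M_0)$, exactly as in Proposition \ref{prop:der}. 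Since each $\nu^j$ with $j \geq 1$ vanishes on the generators of $\mathcal K_+$, it does not affect the induced derivation, so $\overline{q_0} = Q_+$ for $q_0 := \sum_{j \geq 0}\nu^j$; the sum converges in the projective system $(\cO/F^n\cO)_n$ because only finitely many terms contribute at each level, in the spirit of Proposition \ref{prop:infinite_compositions_Z}.

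Injectivity follows either by reading it off from Corollary \ref{cor:cohomologyAdDeltaK} --- the bidegree-$(0,k)$ part of $H^0(\mathfrak X(\cO),\mathrm{ad}_{\tilde\delta})$ is $\mathcal K_+ \otimes_{C^\infty(M_0)/I}\bigl(\bigoplus_{i \geq 1}\Gamma_I(E_{-i}) \oplus H^0(\mathfrak X_{lin},\delta_{lin})\bigr)$, canonically identified with $\mathrm{Der}^k(\mathcal K_+)$ via the isomorphism $H^0(\mathfrak X_{lin},\delta_{lin}) \simeq \mathrm{Der}(C^\infty(M_0)/I)$ of Proposition \ref{prop:der} --- or by a dual recursion: a cocycle $q$ with $\bar q = 0$ sends each negative-degree-$0$ representative into $\mathcal J$, and a primitive $v$ with $q = [\tilde\delta, v]$ can be assembled filtration-level by filtration-level. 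The main obstacle throughout is the recursive construction of the corrections $\nu^j$ in the surjectivity step; this is precisely where the Koszul--Tate hypothesis is essential, since it furnishes the acyclicity of $\tilde\delta$ needed at each stage of the induction.
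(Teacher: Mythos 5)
Your proposal is correct and follows essentially the same route as the paper: the paper's (very terse) proof consists of the observation that negative-degree-$0$ cocycles induce derivations of $\mathcal K_+$ while coboundaries induce the zero derivation, plus the assertion that the layer-by-layer lifting argument of Proposition \ref{prop:der} "extends easily", which is exactly the recursion you spell out; your appeal to Corollary \ref{cor:cohomologyAdDeltaK} for injectivity is likewise the intended mechanism. Your reformulation of the recursion in terms of partial sums commuting with $\tilde\delta$ modulo $F^{n+1}\cO$ is a clean (and arguably more careful) rendering of the paper's relation $\nu^j\circ\delta=\delta\circ\nu^{j+1}$, but it is not a different method.
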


\begin{rem}[Extension of derivations in the affine case]\label{rem:extension_of_vfields}
\normalfont  
At the beginning of the proof of Proposition \ref{prop:der} it was shown that in the smooth category any derivation of the quotient algebra $\cO/I$, considered as functions on the zero locus, can be extended to a derivation of the entire algebra of functions $\cO$. This is also true in the affine case.

Let $M_0$ be an affine $n-$dimensional space over a field $\bk$ of characteristic $0$ (we think of it as $\R$ or $\C$) with affine coordinates $(z^i)_{i=1}^n$; and $I\subset \cO(M_0)=\bk [z^1, \ldots, z^n]$ be an ideal, then every derivation of $\cK=\cO(M_0)/I$ admits an extension to a derivation of $\cO(M_0)$. Indeed, 
let $q^I$ be a derivation of $\cK$. Define $q$ such that $q\big(z^i\big)$ equals to the preimage of $q^I [z^i]\in \cK$ under the projection map $\cO(M_0)\to \cK$, where $[z^i]=z^i /I$. Let us extend $q$ to the whole algebra of functions $\cO(M_0)$ by the Leibniz rule. It is easy to see that $q(I)\subset I$ and $q/I=q^I$.
Proposition \ref{prop:der} remains therefore valid in the context of affine varieties in algebraic geometry.
$\square$ \end{rem}

\begin{rem} \normalfont
The above statements (Theorem \ref{th:cohomOfVectorFields}  
and Proposition \ref{prop:der}
) can be proved in a the following alternative way. Let $\fX_{\sst\mathrm{null}}=\fX_{\sst\mathrm{null}}(\cO_-)$ be the Lie super subalgebra of derivations $\cv$ of $\cO_-$,  satisfying $\cv (\cO_-)\subset I_-$, where 
 $ I_-=I\oplus\bigoplus_{i<0}\cO_i$. In particular, $\fX_{\sst\mathrm{null}}$ contains all derivations of positive negative degree.
 Since $\cK=\cO_0/I\simeq \cO_-/I_-$, 
 $\fX_{\sst\mathrm{null}}$ can be identified with vector fields on the whole non-positively graded manifold $(M_0,_cO_-)$, that vanish at the zero locus; therefore $\fX (\cO_-)/\fX_{\sst\mathrm{null}}=\fX_I$, where
 $\fX_I = \fX (\cO_-)\otimes_{\cO_-} \cK$.
 Notice that, since $\de \in \fX_{\sst\mathrm{null}}$, the subalgebra $\fX_{\sst\mathrm{null}} $ is closed under the adjoint action of $\de$. Thus we have a short exact sequence of complexes
\beqn
0 \to \big(\fX_{\sst\mathrm{null}} , \mathrm{ad}_\de\big) \to \big(\fX (\cO_-), \mathrm{ad}_\de\big) \to \big(\fX_I, \mathrm{ad}_\de\big) \to 0
\eeq
which leads to the long exact sequence in cohomology
\beqn\label{eq:long_exact}
\ldots \to H^i \big(\fX_{\sst\mathrm{null}}, \mathrm{ad}_\de\big) \to H^i \big(\fX (\cO_-), \mathrm{ad}_\de\big) \to H^i \big(\fX_I, \mathrm{ad}_\de\big) \to H^{i+1} \big(\fX_{\sst\mathrm{null}}, \mathrm{ad}_\de\big) \to \ldots
\eeq

Notice that $\big(\fX_I, \mathrm{ad}_\de\big)$ coincides with \eqref{eq:deltalin}.
First we prove that the complex $\big(\fX_{\sst\mathrm{null}} , \mathrm{ad}_\de\big)$ is acyclic. Combining this statement with the fact that any derivation of $\cK$ extends to a derivation of $\cO_-$ (see the beginning of the Proposition \ref{prop:der}), we prove Theorem \ref{th:cohomOfVectorFields} and the remaining part of Proposition \ref{prop:der} altogether.
$\square$
\end{rem}

\begin{rem}{\mbox{}\vskip 2mm}   \normalfont
\begin{itemize}
    \item In fact, Theorem \ref{th:cohomOfVectorFields} computed the cohomology of $(\fX (\cO), \de)$ by using the spectral sequence associated to the following filtration of $\fX (\cO_-)$: $F^p\fX (\cO_-)$  is the Lie subalgebra of vector fields on $M$ which annihilate the subalgebra of functions generated by all elements of negative degree $0, \ldots, p-1$. This spectral sequence converges in the second term. Its zero term  gives sections of the graded vector bundle $\pi^*_-\big( TM_0\oplus E_+\big)$, where $E_+=\bigoplus_{k>0} E_k$ and $\pi_-$ is the projection of $(M_0,\cO)$ onto $M_0$, while the first term of the spectral sequence - sections of the restriction of $TM_0\oplus E_+$ on $X$ together with the differential determined by the normal linearization of $\de$ along $X$.
    \item 
    The ``restriction'' of  vector fields to the zero locus 
    $\fX_I(\mathcal{O})$ is an $\N-$ graded $C^\infty(M_0)/I$-module, the homogeneous components of which of negative degree $i = 0$ and $i<0$ are canonically isomorphic to $\fX_I (\mathcal{O}_-)$ and $\cK\otimes_{\cO_-}\Gamma (E_{-i})$, respectively.
    \item  $\oplus_{i} H^i (\fX_{lin}, \de_{lin})$  is a graded Lie-Rinehart $C^\infty(M_0)/I$-algebra, which extends $\fX_I (\cO_-)$; furthermore, the latter is embedded into the former as a Lie-Rinehart $C^\infty(M_0)/I$-subalgebra consisting of all elements of negative degree. 
\end{itemize}
$\square$ \end{rem}

 \noindent
It follows from Theorem \ref{th:cohomOfVectorFields} that the ${\mathrm{ad}}_\delta$-cohomology of vector fields on a Koszul-Tate resolution
 $(M_0,\mathcal O_-,\delta) $ of $I$ are zero near any point outside the zero locus of $I$. We also claim that non-trivial cohomologies of non-zero degree only appear on the singular part of the zero locus of $I$. The following examples illustrate this phenomenon. 
Example \ref{ex:smooth_case} is provided to show that the positive degree part in cohomology of $\big(\fX(\cO_-), \mathrm{ad}_\de\big)$, where $\big(\cO_0, \de\big)$ is the Koszul-Tate resolution of an ideal $I\subset \cO (M_0)$, is related to singularities of the zero locus of this ideal.  Example \ref{ex:complete_intersection} tells us that, even in the complete intersection case, the degree $1$ cohomology of vector fields on a Koszul-Tate resolution can be non-trivial.

\begin{example}\label{ex:smooth_case}
\normalfont 
 Assume that $X\subset M_0$ is a smooth submanifold, $I$ is the ideal of functions vanishing on $X$, $\de$ is a Koszul-Tate differential which resolves $I$, such that $X$ is the zero locus of $\de$ regarded as a homological vector field on a non-positively graded $M$. It is possible to cover $M$ by graded coordinate charts such that either such a chart does not intersect $X$, then the corresponding $\mathfrak{i}_\kappa$ is non-vanishing at all points, so we can use Lemma \ref{lem:goodfunction} and technique from Corollary \ref{rem:goodfunction_homotopy} to show that the ${\mathrm{ad}}_\de-$ cohomology of vector fields over this chart are vanishing, or there are adapted coordinates $(x^i, y^a, \eta^a, \xi^\a, \zeta^a)$ such that\footnote{In mathematical physics $(y^a, \eta^a, \xi^\a, \zeta^a)$ would be called contractible pairs.} 
 \beqn
 \de=\sum_a y^a \frac{\pt}{\pt \eta^a} + \sum_\a \zeta^\a\frac{\pt}{\pr \xi^\a}\,.
 \eeq
 In such case the intersection of the above coordinate chart with $X$ is given by equations $y^a=\eta^a=\xi^\a=\zeta^\a=0$, therefore all sections of the restriction of $TM$ onto $X$ are of the form
 \beqn
 \cv (x, \frac{\pt}{\pt x})+ \sum_a \left( f^a (x) \frac{\pt}{\pt y^a} + h^a (x) \frac{\pt}{\pt \eta^a}\right)+
 \sum_\a \left( \la^a (x) \frac{\pt}{\pt \zeta^a} + \mu^\a (x) \frac{\pt}{\pt \xi^\a}\right)
 \eeq
 It is easy to see that $\left\{
 \frac{\pt}{\pt y^a}, \frac{\pt}{\pt \eta^a}, \frac{\pt}{\pt \zeta^a},  \frac{\pt}{\pt \xi^\a}\right\}$ generate an acyclic complex w.r.t. $\de$, therefore the cohomology of positive degree sections of $TM_{|X}$ are zero over this coordinate chart and thus on the whole $X$ as $\de$ is linear under the multiplication on functions on $M_0$ which allows us to apply the partition of unity technique.
 \end{example}

\begin{example}\label{ex:complete_intersection}
\normalfont
On $M_0=\mathbb R^2$, equipped with affine coordinates $(x,y)$, let $I$ be the ideal generated by $xy$, and $X$ be an affine variety given by the equation $xy=0$.
A Koszul-tate resolution of $I$ is determined by a homological vector field $\de =xy\frac{\pt}{\pt \xi}$ on the non-positively graded affine manifold $(M_0,\cO_-)$ with graded coordinates $(x,y,\xi)$, where $\xi$ has total degree $-1$.
 It is routine to check directly that, as stated in Proposition \ref{prop:der}:
 \beqn
 H^0 (\fX(\cO_-), \mathrm{ad}_\de) \simeq  \left\{ x f(x)\frac{\pt}{\pt x}+y g(y)\frac{\pt}{\pt y}  \, \middle| \, f,g \in C^\infty(\mathbb R) \right\}.
 \eeq
Since a degree $1$ vector field is a ${\mathrm{ad}}_\delta$-cocycle, since $[\delta, \frac{\partial}{\partial x} ] = y \frac{\partial}{\partial \xi} $,  
$[\delta, \frac{\partial}{\partial y} ] = x \frac{\partial}{\partial \xi}  $, and $  [\delta, \xi \frac{\partial}{\partial \xi} ]= xy \frac{\partial}{\partial \xi}  $, and since the quotient of $C^\infty(M_0) $ by the ideal generated by $x,y,xy$ is $ \mathbb R$, we also have
 \beqn 
 H^1 (\fX (\cO_-), \mathrm{ad}_\de)=
 \left\{
 \lambda \frac{\pt}{\pt\xi}
  \, \middle| \, \lambda \in \mathbb R \right\} \,.
 \eeq
 In particular, the degree $1$ cohomology is different from zero.
  \end{example}

\subsection{Koszul-Tate resolutions and singular locus $NQ$-variety.} \;

Here is the main result of this section.

\begin{theorem}
\label{theo:existenceUnicity}
Let $M_0$ be a manifold and $ I \subset C^\infty(M_0) $ an ideal.
Given
\begin{enumerate}
    \item a Koszul-Tate resolution $ (M_0,\cO_-, \delta) $ of $ I$ with a splitting $$\cO_-=\Gamma(S(\oplus_{i \geq 1} E_i^*)) $$
    \item a positively graded $Q$-variety 
$(\mathcal K_+,Q_+) $ on $C^\infty(M_0)/I $ with a splitting
$$  \mathcal K_+ = \Gamma_I( S(\oplus_{i \geq 1} E_{-i}^*)) ,$$ 
\end{enumerate}
there exists a $Q$-manifold
$(M_0,\cO,Q) $ with a splitting:
$$\cO \simeq  \Gamma(\tilde S (\oplus_{i \in \mathbb Z^*} E_i^* ) ) $$ 
\begin{enumerate}
    \item whose negative part is the Koszul-Tate resolution $(M_0,\cO_-,\delta) $,
    \item and whose positively graded $NQ$-variety is 
$(\mathcal{K}_+,Q_+)$.
\end{enumerate}
Two such $Q$-vector fields are diffeomorphic through a diffeomorphism which is the composition of flows of degree zero vector fields as in Proposition \ref{prop:infinite_compositions_Z}, and that induce the identity maps of the base manifold $M_0$, of the negative part $ \cO_- $, and of the positively graded NQ-variety $\mathcal K_+ $.
\end{theorem}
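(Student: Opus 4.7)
The plan is to build $Q$ as a formal sum $Q = \tilde\delta + q_0 + q_1 + q_2 + \cdots$, with each $q_n$ of negative degree $n$ and total degree $+1$, and to solve the Maurer--Cartan equation $[Q,Q]=0$ recursively in negative degree. Using the two given splittings I identify $\cO \simeq \Gamma(\tilde S(\oplus_{i \in \Z^*} E_i^*))$ and extend $\delta$ trivially on $\oplus_{i \geq 1} E_{-i}^*$ to a derivation $\tilde\delta$, which realises a Koszul--Tate resolution of $\mathcal K_+$ in the sense of Definition~\ref{def:KoszulTateResolK}. Corollary~\ref{cor:extension} then lifts $Q_+$ to a $\tilde\delta$-commuting $q_0 \in \mathfrak X(\cO)$ of negative degree $0$ and total degree $+1$. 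Decomposed by negative degree, $[Q,Q]=0$ is automatic in degrees $-2$ and $-1$, and for each $n \geq 0$ reduces to
$$[\tilde\delta,\, q_{n+1}] \;=\; R_n, \qquad R_n := -\tfrac{1}{2}\sum_{\substack{i+j=n\\ i,j\geq 0}} [q_i,q_j].$$
The inductive hypothesis together with the Jacobi identity make $R_n$ an $\mathrm{ad}_{\tilde\delta}$-cocycle of negative degree $n$ and total degree $+2$; for $n \geq 1$ the vanishing $H^n = 0$ of Corollary~\ref{cor:cohomologyAdDeltaK} supplies the required primitive $q_{n+1}$. For the base case $n=0$ the ambient cohomology is not zero, but Corollary~\ref{cor:extension} identifies the class of $R_0$ with the derivation $-\tfrac{1}{2}[Q_+,Q_+] = -Q_+^2 = 0$ of $\mathcal K_+$, so $R_0$ is still exact. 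The sum $\sum q_n$ converges in the negative filtration topology because $q_n$ raises the negative degree by $n$.

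\textbf{Uniqueness.} For a second solution $Q' = \tilde\delta + q'_0 + q'_1 + \cdots$, I would match $Q$ and $Q'$ by an infinite sequence of flows. Since both $q_0$ and $q'_0$ lift the common $Q_+$, the $\mathrm{ad}_{\tilde\delta}$-cocycle $q_0 - q'_0$ projects to zero in $\mathrm{Der}^1(\mathcal K_+)$, and Corollary~\ref{cor:extension} yields $v_1$ of negative degree $+1$ and total degree $0$ with $q'_0 - q_0 = [\tilde\delta, v_1]$. The flow $e^{v_1}$, well defined by Proposition~\ref{prop:infinite_compositions_Z}, induces the identity on $M_0$, $\cO_-$ and $\mathcal K_+$, and its pushforward of $Q'$ now coincides with $Q$ through negative degree $0$. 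Iteratively, at step $n \geq 1$ the residual discrepancy in negative degree $n$ is $\mathrm{ad}_{\tilde\delta}$-closed by Maurer--Cartan and $\mathrm{ad}_{\tilde\delta}$-exact by $H^n = 0$; a primitive $v_{n+1}$ of negative degree $n+1$ and total degree $0$ gives the next flow, and the infinite composition $\bigcirc_{n \uparrow} e^{v_n}$ of Proposition~\ref{prop:infinite_compositions_Z} is the sought equivalence, by construction inducing the identity on $M_0$, $\cO_-$, and $\mathcal K_+$.

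\textbf{Main obstacle.} In both halves of the argument the genuinely delicate point is the base case in negative degree $0$, where the ambient cohomology fails to vanish; the proof must reinterpret the obstruction class via Corollary~\ref{cor:extension} as a derivation of $\mathcal K_+$ and kill it using $Q_+^2 = 0$ (for existence) or the hypothesis $Q_+ = Q'_+$ (for uniqueness). All higher steps then reduce uniformly to the vanishing $H^{n \geq 1}(\mathfrak X(\cO), \mathrm{ad}_{\tilde\delta}) = 0$ of Corollary~\ref{cor:cohomologyAdDeltaK}, and the convergence issues are handled by Proposition~\ref{prop:infinite_compositions_Z}.
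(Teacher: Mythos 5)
Your proof is correct and follows essentially the same route as the paper's: a recursive homological-perturbation construction of $Q=\tilde\delta+\sum_n q_n$ ordered by negative degree, with the degree-zero obstruction killed via Corollary \ref{cor:extension} together with $Q_+^2=0$, the higher obstructions killed by the vanishing of cohomology in Corollary \ref{cor:cohomologyAdDeltaK}, and uniqueness handled by the analogous recursion on flows composed via Proposition \ref{prop:infinite_compositions_Z}. The only differences are cosmetic bookkeeping (your factor $-\tfrac{1}{2}$ and the negative-degree indexing of the primitives are, if anything, slightly more carefully stated than in the paper).
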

\begin{proof}
The idea of the proof consists in applying perturbation theory techniques, and construct $Q$ (and the degree $0$ vector fields defining $\Psi$) through a recursion by showing that the obstructions for the next step are cohomology classes that vanish.

Let  $ (M_0,\cO,\tilde{\delta})$ be as in Definition \ref{def:KoszulTateResolK}.
The first step consists in applying Corollary \ref{cor:extension}: there exists a vector field $q_0$ of negative degree $0$ and total degree $+1$, such that $ [q_0, \tilde{\delta}]=0 $ (which implies that $ Q_0$ induces a derivation of $\mathcal K_+ $) whose induced derivation of $\mathcal K_+ $ is $Q_+$.

Now, the proof of the existence of the vector field $Q$ consists in constructing recursively a family $ (q_i)_{i \geq 1}$ such that 
\begin{enumerate}
    \item Each $q_i$ is of negative degree $i $ and total degree $ +1$
    \item $Q_i = \tilde{\delta} + q_0 + \dots + q_i $ satisfies 
    $ [Q_i,Q_i]=0$ up to vector fields of negative degree $ \geq i $.
\end{enumerate}
For instance $Q_0= \tilde{\delta}+q_0 $ satisfies the recursion for $i=0$ since:
 $$  [\tilde{\delta}+Q_0, \tilde{\delta}+Q_0]  = [Q_0,Q_0]$$
and since $[Q_0,Q_0] $ is of negative degree $0$.

Now, by the graded Jacobi identity of the graded Lie algebra of vector fields  $\mathfrak X(\cO) $,
 $$ \left[\tilde{\delta}+Q_0 ,[\tilde{\delta}+Q_0, \tilde{\delta}+Q_0]\right]=0,$$
 so that $ {\mathrm{ad}}_{\tilde{\delta}}  \left( [ Q_0,Q_0] \right) =0 $ is an ${\mathrm{ad}}_{\tilde{\delta}}  $-cocycle of negative degree $0$.
Now, since $Q_0 $ induces $Q_+ $ on $\mathcal K_+ $ and since $Q_+^2=0 $, the class of
$[Q_0,Q_0]$ in $ \mathcal K_+ \otimes H^0(\mathfrak X_{lin}, \delta_{lin}) $ is zero, 
so that there exists a vector field $q_1$ of total degree $+1$ and negative degree $+1$ such that 
 $ [\tilde{\delta}, q_1]=[q_0,q_0]$. 
As a consequence
 $$ Q_1 = \tilde{\delta} + q_0+q_1$$ 
 satisfies the recursion condition for $i=1$.
 
 The proof then continues easily by noticing that if 
 $Q_i:=\tilde{\delta} + \sum_{k=1}^i q_k $
 satisfies the recursion assumption at order $i$, then 
 $$  [Q_i,Q_i] = \sum_{k=0}^i [q_k,q_{i-k}] + R_i$$
 with $ R_i$ of negative degree $\geq i+2 $ and 
  $\sum_{k=0}^i [q_k,q_{i-k}]$ being the component of negative degree $ i+1$.
 By the graded Jacobi identity, this implies that
 $\sum_{k=0}^i [q_k,q_{i-k}]$ is an ${\mathrm{ad}}_{\tilde{\delta}} $-cocycle of negative degree $i+1 $. Since cohomology is zero in that degree by Corollary \ref{cor:cohomologyAdDeltaK}, there exists a vector field $ q_{i+1}$ of total degree $1$ and negative degree $i+1 $ such that  
 $-\sum_{k=0}^i [q_k,q_{i-k}] = {\mathrm{ad}}_{\tilde{\delta}} q_{i+1}$
 which in turn implies that 
  $$ Q_{i+1} = \tilde{\delta}+\sum_{k=0}^{i+1} q_k $$
  satisfies the recursion relation for $i+1$.  Now, the series  $$ \tilde{\delta} + \sum_{i=0}^\infty q_i  $$
 converges with respect to the negative degree filtration $(F^i\cO)_{i \geq 0} $. We denote by $Q$ its limit. By construction, $ [Q,Q] =0 $ (since $[Q,Q]$ is a derivation that takes values in $\cap_{i \geq 0} F^i \cO = \{0\}$), and $Q$ has total degree $+1 $, so that 
  $ (M_0,\cO,Q)$ is a $\mathbb Z^*$-graded $Q$-manifold. By Remark \ref{rmk:recovering}, $Q$ satisfies both requirements in the Theorem \ref{theo:existenceUnicity}.

\noi Now, let us show that any two such vector fields can be intertwined by a diffeomorphism for the desired form. Let $Q$ and $Q'$ be two
vector fields as in Theorem \ref{theo:existenceUnicity}.
We will construct a family  $\cu_1, \cu_2, \cu_3, \dots $ of total degree $0$ and of respective negative degrees $1,2,3, \dots $ such that the sequence of degree $+1 $ vector fields defined by the recursion relation $Q_{0}=Q $ and 
 \begin{equation}
\label{eq:Qiplus1}
     Q_{i+1} = e^{{\mathrm{ad}}_{\cu_{i+1}}} Q_i 
 \end{equation} 
 (which is well-defined, see Section \ref{sec:exp}) satisfies that $Q_i$ coincides with $ Q'$ in negative degrees $-1, \dots, i-1 $. Proposition \ref{prop:infinite_compositions_Z} implies then that the infinite composition of the exponentials of the vector fields $\cu_i $ intertwines $Q$ and $Q'$ through a diffeomorphism $\Psi$ which is by construction of the desired form.

Let us first construct $\cu_1 $.
We have:
\begin{eqnarray} \nonumber
Q &=& \tilde{\de} + q_0 + \sum_{i\ge 1} q_{i} \\ \nonumber
Q' &=& \tilde{\de} + q'_0 + \sum_{i\ge 1} q'_{i} 
\end{eqnarray}
where $ q_i,q_i'$ are of negative degree $i$. 
Now, since both $ q_0,q_0'$ are ${\mathrm{ad}}_{\tilde{\delta}}$-cocycles, so is $ q_0-q_0' $. Since by construction, both $q_0 $ and $q_0' $ induce the same derivation $Q_+$ on $\mathcal K_+ $, their difference induce the trivial derivation of $\mathcal K_+ $. This implies that $q_0-q_0' $ is a ${\mathrm{ad}}_{\tilde{\delta}}$-coboundary by Corollary \ref{cor:cohomologyAdDeltaK}, and there exists a vector field $\cu_{-1} $ of negative degree $ +1$ and total degree $0$ such that
 $$  q_0-q_0' = [\tilde{\delta}, \cu_{-1}] .$$
By construction,
\beqn
\nonumber
Q_1 := e^{{\mathrm{ad}}_{\cu_{-1}}}(Q)=Q+
\sum_{m=1}^\infty \frac{1}{m!} ad^m_{u_{-1}}(Q)
\eeq
is well-defined, squares to zero, and satisfies again the requirements of Theorem \ref{theo:existenceUnicity}. Also, it coincides with $Q'$ in negative degree $-1 $ and $0$. 

Now, assume that $\cu_1, \dots, \cu_{i} $ are constructed.  
Consider the decompositions according to negative degrees:
\beqn  
\nonumber
Q'= \tilde{\delta} + q_0' + \dots + q_i' + q_{i+1}' + \cdots  \\
\nonumber
Q_i = \tilde{\delta} + q_0' + \dots + q_i' + q_{i+1} + \cdots  
\eeq
It follows from $[Q_i,Q_i]=0 $ and $[Q',Q']=0 $ 
that 
 $${\mathrm{ad}}_{\tilde \delta} q_{i+1}  =- \sum_{k=0}^{i+1} [q_k,q_{i+1-k}] \hbox{ and }  {\mathrm{ad}}_{\tilde \delta} q_{i+1}'  = -\sum_{k=0}^{i+1} [q_k,q_{i+1-k}] $$
The difference $ q_{i+1}-q_{i+1}' $ is therefore
 an ${\mathrm{ad}}_{\tilde \delta}  $-cocycle.
Since by Corollary \ref{cor:cohomologyAdDeltaK}, the cohomology is zero in degree $i+1 $, there exists a vector field $\cu_{i+1}  $ (of negative degree $i+1$ and total degree $0$) such that $q_{i+1}' =q_{i+1} +  {\mathrm{ad}}_{\tilde \delta} \cu_{i+1}    $.
The vector field $Q_{i+1}$ defined as in \eqref{eq:Qiplus1} satisfies the recursion relation for $i+1$.

By Proposition \ref{prop:infinite_compositions_Z}, the infinite ordered  product of automorphisms
$\Psi=\lim\limits_{k\to\infty} e^{\cu_{-k}}\circ \ldots\circ e^{{\cu_{-1}}}$ exists and induces a diffeomorphism $\Psi $ of the graded manifold $M$. Furthermore, one has
$\Psi Q\Psi^{-1}=\lim\limits_{k\to\infty} e^{{\mathrm{ad}}_{\cu_{-k}}}\circ \ldots\circ e^{{\mathrm{ad}}_{\cu_{-1}}} (Q)$ and 
\beqn
\Psi Q\Psi^{-1} - Q' \in \bigcap_{j\ge 0} F^j \cO (M)=\{0\}\,,
\eeq
therefore $\Psi Q\Psi^{-1} - Q'=0$. 

It is also clear that, since the total degree of each $\cu_i$ is zero, 
$deg_+ (\cu_{-i})=deg_- (\cu_{-i})=i$ for each $i \geq 1$, so that the positive degree of $\cu_1, \cu_2, \cu_3, \dots $ is $1,2,3, \dots $ respectively.
This implies that $ \Psi(F) -F \in \cI_+ $ for every $F \in \cO$, and therefore that $ \Psi$ induces the identity on the negative part $\cO_-=\cO/\cI_+ $.
These degree relations also imply that $ \Psi(F) -F \in \cI_- $, so that $\Psi $ induces the identity of $ S(\oplus_{i \geq 1}\Gamma(E_{-i}^*))$, and therefore of its quotient $\mathcal K_+ $. 
$\blacksquare $
\end{proof}

Here is an immediate consequence of the second part of Theorem \ref{theo:existenceUnicity}.

\begin{cor}
Any two $\mathbb Z^*$-graded manifolds $(M_0,\cO,Q)$  and $ (M_0,\cO, Q')$ over the same graded manifold $(M_0,\cO) $, whose negative parts coincide and are Koszul-Tate resolutions, and whose negatively graded NQ-varieties coincide, are diffeomorphic through a diffeomorphism as in Theorem \ref{theo:existenceUnicity}. 
\end{cor}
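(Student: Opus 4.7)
The corollary is essentially a repackaging of the uniqueness clause of Theorem \ref{theo:existenceUnicity}, so the plan is to reduce to that clause by exhibiting both $Q$ and $Q'$ as coming from a common set of input data.

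First, I would invoke Batchelor's theorem to fix a single splitting $\cO \simeq \Gamma(\tilde S(\oplus_{i \in \Z^*} E_i^*))$ of the shared underlying graded manifold. Restricting to the non-positive generators yields a splitting of $\cO_- \simeq \Gamma(S(\oplus_{i \geq 1} E_i^*))$, and the hypothesis ``negative parts coincide and are Koszul-Tate resolutions'' guarantees that this splitting, together with the common induced differential $\delta$, is a Koszul-Tate resolution of the curvature ideal $\langle \kappa \rangle$. Reducing the positively-graded generators modulo $\langle \kappa \rangle$ produces a compatible splitting $\mathcal K_+ \simeq \Gamma_{\langle \kappa \rangle}(S(\oplus_{i \geq 1} E_{-i}^*))$, and the hypothesis that the zero-locus $NQ$-varieties coincide identifies the induced differential with a common $Q_+$. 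This supplies the two pieces of input data required by Theorem \ref{theo:existenceUnicity}.

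Next, I would observe that with these splittings in hand both $(M_0,\cO,Q)$ and $(M_0,\cO,Q')$ fit the description of $Q$-manifolds produced by Theorem \ref{theo:existenceUnicity} from the common input $((M_0,\cO_-,\delta),(\mathcal K_+,Q_+))$: by assumption, the projection of each to the negative part is $\delta$, and the derivation each induces on $\mathcal K_+$ is $Q_+$. The uniqueness clause of Theorem \ref{theo:existenceUnicity} then immediately supplies a diffeomorphism $\Psi$ of $(M_0,\cO)$, realized as an infinite ordered composition $\bigcirc_{i \uparrow \geq 1} e^{\cu_i}$ of flows of degree zero vector fields of strictly positive negative degree (in the sense of Proposition \ref{prop:infinite_compositions_Z}), such that $\Psi Q \Psi^{-1} = Q'$ and such that $\Psi$ induces the identity on $M_0$, on $\cO_-$, and on $\mathcal K_+$.

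The only bookkeeping step to verify is that any Batchelor splitting of $\cO$ automatically induces the required splittings on the two quotients $\cO/\mathcal I_+ = \cO_-$ and $\cO/(\mathcal I_- + \cO Q[\mathcal I_-]) = \mathcal K_+$; this is automatic from Proposition \ref{prop:K+Q+} and the definitions, so no real obstacle arises. I therefore do not anticipate any substantive difficulty: the content of the corollary lies entirely in the work already carried out to establish Theorem \ref{theo:existenceUnicity}, and in particular in the cohomological input provided by Corollary \ref{cor:cohomologyAdDeltaK} that powered the uniqueness argument there.
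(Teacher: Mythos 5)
Your proposal is correct and takes essentially the same route as the paper, which presents this corollary as an immediate consequence of the uniqueness clause of Theorem \ref{theo:existenceUnicity} and offers no further argument. The bookkeeping you add (fixing one Batchelor splitting and checking that it induces compatible splittings of $\cO_-$ and $\cK_+$ so that both $Q$ and $Q'$ satisfy the theorem's hypotheses with the same input data) is exactly the intended, and routine, reduction.
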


\section{Local structures near points where the curvature vanishes} \label{sec:curv-vanish}

\subsection{$Q$-structure in local coordinates}

In the following, we consider local coordinates 
 of a graded manifold $(M_0,\cO)$ of the form
 $$(y_1, \dots, y_r,(x_i)_{i \in I},\theta_1, \dots, \theta_r,(\eta_j)_{j \in J}), $$
 where Latin letters $x_i,y_k$ will be used for degree $0$ variables, the Greek letters $\theta_k$ (resp. $\eta_j$) will be used for variables of degree $+1$ (resp. of degree different from $0$).
 Last, we will also assume that the variables
$y_k$ and $\theta_k$ go ``in pairs'' and that there is the same number $r$ of them. 
Last, an expression like 
$$R\left(x,\eta, \frac{\partial}{\partial x_\bullet}, \frac{\partial}{\partial \eta_\bullet}\right)$$
stand for any local vector field of the form:
 $$ \sum_{i \in I } A_i(x,\eta ) \frac{\partial}{\partial x_i} + 
 \sum_{j \in J } B_j(x,\eta ) \frac{\partial}{\partial \eta_j} $$
where $A_i(x,\eta), B_j(x,\eta) $ are functions that depend on the variables $(x_i)_{i \in I},(\eta_j)_{j \in J} $ only. 

For any $Q$-manifold $(M_0,\cO,Q)$, equipped with a splitting
$$ \Phi \colon \cO \simeq \Gamma \left( \hat{S} \oplus_{i \in \mathbb Z^*} E_i^* \right) , $$
the anchor map $\rho \colon E_{-1} \to T M_0 $ is the vector bundle morphism defined by:
 $$ \langle Q[f]^{(1)} , u \rangle = \rho(u) [f] ,$$
 for every $u \in \Gamma(E_{-1})$ and $f \in C^\infty(M_0)$. 
 Above, $Q[f]^{(1)}$ stands for the component of polynomial degree $1$ of $ Q[f]$: since $ Q[f]$ is of degree $1$, $Q[f]^{(1)}$ is a section of $E_{-1}^*$, so that the previous definition makes sense.
 
 \begin{rem}
 \normalfont
 \label{rmk:anchor-cannical}
 By construction, the anchor map of a $Q$-manifold is a vector bundle morphism $\rho \colon E_{-1} \to TM_0 $ that depends on the choice of  the splitting, although the vector bundles $E_{-1}$ and $TM_0$ do not.
 For instance, in a splitting as in Proposition \ref{prop:ifnozerolocus} for which $Q = \mathfrak i_\kappa$, the anchor map is the zero map. But it may be non-zero in some other splitting. However, at every $m $ that belongs to the zero locus of the curvature $\kappa \in \Gamma(E_{+1}) $,  the anchor map $ \rho \colon E_{-1} \to M$ does not depend on the choice of a splitting, and is therefore canonical. 
 $\square$ \end{rem}

Remark \ref{rmk:anchor-cannical} implies that the following theorem only makes sense when the point $ m  $ is the zero  locus of the curvature $\kappa $. 

\begin{theorem}
\label{thm:localAnchor}
Let $ (M_0,\cO,Q)$ be a $Q$-manifold.  Let $\rho \colon E_{-1} \to TM_0 $ be the anchor map corresponding to some splitting.
Every point $m \in M_0$ on the zero locus of the curvature $\kappa$ admits a coordinate neighborhood with variables $(y_1, \dots, y_r,(x_i)_{i \in I},\theta_1, \dots, \theta_r, (\eta_j)_{j \in J}) $ on which $Q$ reads:
 $$ Q = \sum_{k=1}^{r} {\theta_k}\frac{\partial}{\partial y_k} + R\left(x,\eta, \frac{\partial}{\partial x_\bullet}, \frac{\partial}{\partial \eta_\bullet}\right)  $$
 where $r$ is the rank of the anchor map $\rho\colon E_{-1} \to TM_0 $ at $m$.
\end{theorem}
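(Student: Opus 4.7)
The plan is to produce the claimed coordinates in two stages: first construct graded coordinates in which $Q[y_k]=\theta_k$ and $Q[\theta_k]=0$, then modify the remaining $(x,\eta)$ variables to absorb any $(y,\theta)$-dependence from the rest of $Q$. For the first stage, since $\rho|_m\colon E_{-1}|_m\to T_mM_0$ has rank $r$, I would choose a local frame $e_1,\dots,e_s$ of $E_{-1}$ and coordinates on $M_0$ near $m$ so that $\rho(e_k)|_m=\partial/\partial y_k|_m$ for $k\le r$ and $\rho(e_j)|_m=0$ for $j>r$. Fix any splitting of $\cO$; from the defining formula $\langle Q[f]^{(1)},u\rangle=\rho(u)[f]$ one sees that the polynomial-degree-$1$ component at $m$ of the degree $+1$ function $\theta_k:=Q[y_k]$ equals the dual covector $e^k$. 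The $\theta_k$'s can therefore be used, in place of the initial generators that project to $e^k$, as part of a local graded coordinate system, completed by degree $0$ coordinates $x_i$ extending $(y_k)$ on $M_0$ and by generators $\eta_j$ of the remaining $E_i^*$'s (plus the missing degree $+1$ ones). By construction $Q[y_k]=\theta_k$ and then $Q[\theta_k]=Q^2[y_k]=0$, so writing $Q$ in this basis yields
\[
Q \;=\; \sum_{k=1}^r \theta_k\frac{\partial}{\partial y_k} \;+\; \sum_i A_i\frac{\partial}{\partial x_i} \;+\; \sum_j B_j\frac{\partial}{\partial \eta_j}
\]
with $A_i,B_j$ local functions of all variables; set $Q_0 := \sum_k \theta_k\partial/\partial y_k$ and $\tilde R := Q - Q_0$.

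For the second stage, decompose $\tilde R = \sum_{n\ge 0} \tilde R_n$ by the total $(y,\theta)$-arity (the number of $y$'s and $\theta$'s appearing in the coefficients). The auxiliary degree $-1$ vector field $K := \sum_k y_k \partial/\partial \theta_k$ satisfies $[Q_0,K] = \sum_k (y_k\partial_{y_k}+\theta_k\partial_{\theta_k})$, the Euler field in $(y,\theta)$, which is invertible on expressions of positive $(y,\theta)$-arity. Inductively, assume $\tilde R_1 = \dots = \tilde R_{n-1} = 0$; projecting $[Q,Q]=0$ onto arity $n$ reduces to a $Q_0$-cocycle condition on $\tilde R_n$ (the commutator contributions involving $\tilde R_0$ are compatible with this bookkeeping because $\tilde R_0$ depends only on $x,\eta$ and differentiates only along those directions). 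Applying $K$ divided by the Euler weight produces a degree $0$ vector field $\cu_n$, of $(y,\theta)$-arity $\ge n$ and with no $\partial_{y_\bullet}$ or $\partial_{\theta_\bullet}$ component, such that $[Q_0,\cu_n] = \tilde R_n$. The flow $e^{\cu_n}$ then pushes $Q$ forward to a new derivation of the same shape, but with $\tilde R_k = 0$ for $k\le n$. Iterating and invoking a $(y,\theta)$-adic analogue of Proposition \ref{prop:infinite_compositions_Z} yields the diffeomorphism that brings $Q$ to the claimed normal form.

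The main obstacle lies in this second stage. One must verify three points: that the obstruction at each arity is genuinely $Q_0$-exact (using carefully that $\tilde R_0$ preserves the arity decomposition); that the chosen $\cu_n$ has no $\partial_{y_\bullet}$ or $\partial_{\theta_\bullet}$ components, so as not to destroy the identities $Q[y_k] = \theta_k$ and $Q[\theta_k] = 0$ already established; and that the infinite product $\cdots e^{\cu_2}\,e^{\cu_1}$ converges, given that the negative filtration on $\cO$ and the $(y,\theta)$-arity filtration do not coincide, so Proposition \ref{prop:infinite_compositions_Z} does not apply verbatim. A local variant of that proposition with respect to the $(y,\theta)$-adic topology, or a Borel-type smoothing argument yielding a smooth representative of a formal-power-series solution at $m$, should close the gap.
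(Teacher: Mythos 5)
Your route is genuinely different from the paper's: the paper never sets up all $r$ pairs at once and never uses a $(y,\theta)$-adic perturbation argument. Instead it proves a rank-one statement (Proposition \ref{prop:anchor_non_zero}) — pick $e\in\Gamma(E_{-1})$ with $\rho(e)|_m\neq 0$, straighten the honest degree-$0$ vector field $\cv=[Q,\mathfrak i_e]$ to $\partial/\partial y$ by the graded flow-box lemma, deduce from $[\cv,Q]=0$ that $Q=\tau\,\partial_y+R(x,\eta,\partial_{x},\partial_{\eta})$, and then absorb $\tau$ and the residual terms by finitely many explicit coordinate changes — and the theorem follows by a \emph{finite} recursion on the rank. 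This keeps every step inside the smooth category. Your Stage 1 is fine (taking $\theta_k:=Q[y_k]$ as new degree-$+1$ coordinates is legitimate and does give $Q=\sum_k\theta_k\partial_{y_k}+\sum A_i\partial_{x_i}+\sum B_j\partial_{\eta_j}$), but your Stage 2 has two genuine problems.

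First, the obstruction at arity $n$ is a cocycle for $\mathrm{ad}_{Q_0+\tilde R_0}$, not for $\mathrm{ad}_{Q_0}$: projecting $[Q,Q]=0$ gives $[Q_0,\tilde R_n]+[\tilde R_0,\tilde R_n]=0$, and what you must solve (so that $e^{\cu_n}$ removes $\tilde R_n$ without creating new arity-$n$ terms via $[\cu_n,\tilde R_0]$) is $[Q_0+\tilde R_0,\cu_n]=\tilde R_n$. The Euler homotopy $\cu_n=\tfrac1n[K,\tilde R_n]$ only yields $[Q_0,\cu_n]=\tilde R_n+\tfrac1n[K,[\tilde R_0,\tilde R_n]]$, leaving an uncontrolled arity-$n$ residue; closing this requires a perturbation/spectral-sequence argument you have not supplied. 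Second, and more fatally, the $(y,\theta)$-adic filtration is not separated on smooth functions: functions flat along $\{y=0\}$ lie in every power of the ideal, so unlike the negative filtration of Proposition \ref{prop:infinite_compositions_Z} (where $\cap_i F^i\cO=\{0\}$), the infinite product $\cdots e^{\cu_2}e^{\cu_1}$ converges only as a formal diffeomorphism along $\{y=\theta=0\}$. A Borel-type argument produces a smooth change of coordinates realizing that formal series, but then $Q$ agrees with the claimed normal form only to infinite order along the locus, not identically — which is strictly weaker than the theorem. This is exactly the formal-versus-smooth issue familiar from Weinstein-type splitting theorems, and it is why the paper's proof runs the induction with genuine flows of the degree-$0$ vector fields $[Q,\mathfrak i_e]$ rather than with an adic limit.
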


We start with a remark and two lemmas, before proving a proposition crucial for the proof of the above theorem.

\begin{rem}
\normalfont
\label{rmk:basicRmk}
Any degree $0$ vector field $\cv$ on a $\Z^*$-graded manifold $(M_0,\cO)$ induces a vector field  $\underline{\cv} $ on $M_0$: A degree $0$ vector field being, by definition, a degree $0$ derivation of  $\cO $, it preserves both negative and positive functions, so it preserves the maximal ideal $\mathcal I$, and induces a derivation of the quotient $\cO/\mathcal I$, which is isomorphic to $ C^\infty(M_0)$. This induced derivation is a vector field on $M_0$.
In coordinates, this assignment reads:
 $$\begin{array}{rcl} \fX(\cO)_0 &\to & \mathfrak X(M_0) \\ \sum_{i} f_i(z,\zeta) \frac{\partial}{\partial z_i}  +   \sum_{j} g_j(z, \zeta) \frac{\partial}{\partial \zeta_j}  &\mapsto & \sum_{i} f_i(z,0) \frac{\partial}{\partial z_i} \end{array}$$
 where $(z,\zeta)$ are local coordinates of degree $0$ and different from $0$ respectively.
$\square$ \end{rem}
Lemma \ref{lem:Hadamard} extends to graded manifolds the well-known straightening theorem, also known as Hadamard Lemma.

\begin{lemma}
\label{lem:Hadamard}
 Let $\cv$ be
 a vector field of degree $0$  on a graded manifold $ (M_0,\cO)$ with $M_0$. 
 Every point of the base manifold $M_0$ where  the induced  vector field $\underline{\cv}$ is different from zero admits a coordinate neighborhood $( y,(x_i)_{i \in I}, (\eta_j)_{j \in J})$ on which 
 $\cv = \frac{\partial}{\partial y} $.
\end{lemma}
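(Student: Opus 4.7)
The plan is to first straighten the induced base vector field via the classical Hadamard lemma, and then iteratively remove the remaining higher-order corrections along the filtration, using Proposition \ref{prop:infinite_compositions_Z} to assemble the successive coordinate changes into a single graded diffeomorphism.

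First I would apply Batchelor's theorem to choose a local splitting near $m$, which yields local coordinates of the form $(z_i, \eta_j)$ with $z_i$ of degree $0$ and $\eta_j$ of non-zero degree. Since the induced vector field $\underline{\cv}$ on $M_0$ is non-zero at $m$, the classical Hadamard/straightening theorem produces new degree $0$ coordinates $(y, x_i)$ such that $\underline{\cv} = \partial/\partial y$. Lifting these trivially to the graded manifold (keeping the chosen $\eta_j$), the vector field takes the form
$$\cv \;=\; \partial/\partial y + \cw , \qquad \text{with } \underline{\cw} = 0.$$
Equivalently, all degree $0$ coefficients in the expansion of $\cw$ with respect to $\partial/\partial y, \partial/\partial x_i, \partial/\partial \eta_j$ lie in the ideal $\cI$ generated by the non-zero degree variables, and in particular $\cw[\cO] \subset \cI$.

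Next I would construct by induction a sequence of degree $0$ vector fields $(\cu_n)_{n \geq 1}$ with $\cu_n[\cO] \subset F^n \cO$, such that each conjugation $\cv \mapsto e^{-{\mathrm{ad}}_{\cu_n}}(\cv)$ removes one more layer of the residual correction. Concretely, writing the current residual as $\cw = \cw_n + (\text{deeper terms})$ where $\cw_n$ is the component contributing at filtration level $n$, the equation to solve at each step is the cohomological one
$$[\,\partial/\partial y,\, \cu_n\,] \;=\; \cw_n \pmod{F^{n+1}},$$
which is an ODE in the single variable $y$ with coefficients depending on the remaining variables as parameters. Integration with respect to $y$ yields $\cu_n$ explicitly, and a straightforward check shows that $\cu_n$ is still a degree $0$ vector field lying in the required filtration piece. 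Proposition \ref{prop:infinite_compositions_Z} then ensures that the infinite composition $\Psi = \bigcirc_{n \uparrow \in \N} e^{\cu_n}$ defines a diffeomorphism of the graded manifold. By construction $\Psi_* \cv - \partial/\partial y$ lies in $\bigcap_{n \geq 0} F^n \cO \cdot \fX = \{0\}$, so $\Psi_* \cv = \partial/\partial y$ and $\Psi$ provides the desired coordinate change.

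The main obstacle is bookkeeping for the filtration: the $\partial/\partial \eta_j$ components of $\cw$ (including those coming from positive degree variables) need to decrease in the appropriate filtration at each step, so one must use the combined grading provided by the polynomial degree in non-zero variables compatibly with the negative filtration $F^\bullet \cO$, exactly as in the proofs of Lemmas \ref{Q-} and \ref{lem:killNegativePart}. Once this filtration framework is set up, solvability at each step is immediate because $\partial/\partial y$ acts invertibly (by integration) on functions of $y$ with prescribed vanishing at $y=0$, and the inductive construction carries through without cohomological obstructions.
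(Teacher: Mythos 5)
There is a genuine gap in your filtration bookkeeping, and it is located exactly at the point you flag as ``the main obstacle''. After straightening the base you correctly get $\cv = \frac{\partial}{\partial y} + \cw$ with $\cw[\cO] \subset \cI$, but membership in $\cI$ is much weaker than membership in $F^1\cO$: the negative filtration only counts \emph{negative-degree} generators, so it is blind to two kinds of residual terms. First, the fiberwise degree-preserving linear part of $\cw$ (terms of the shape $A_{jk}(y,x)\,\eta_k \frac{\partial}{\partial \eta_j}$ with $\deg \eta_k = \deg \eta_j$) preserves every $F^k\cO$ but increases none of them. Second, terms built purely from positive-degree variables, e.g.\ $\theta_a\theta_b \frac{\partial}{\partial \theta_c}$ with $\deg\theta_a + \deg\theta_b = \deg\theta_c > 0$, also sit at negative-filtration level $0$ no matter how high their polynomial degree. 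Consequently your induction cannot start: these components are never of the form $\cw_n$ with $n \geq 1$, the correcting fields $\cu_n$ solving $[\frac{\partial}{\partial y}, \cu_n] = \cw_n$ would violate the hypothesis $\cu_n \colon \cO \to F^n\cO$ of Proposition \ref{prop:infinite_compositions_Z}, and your appeal to Lemmas \ref{Q-} and \ref{lem:killNegativePart} does not transfer: those proofs use polynomial degree \emph{in the negative variables}, for which level $k$ is contained in $F^k\cO$ --- a containment that fails for positive variables.

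To repair the argument you would have to treat the level-$0$ part separately before launching the filtration induction: the linear part is removed by a fiberwise linear change of coordinates given by the fundamental solution of the ODE $\partial_y g + A g = 0$ (a genuine bundle automorphism along the flow, not an exponential of the type covered by Proposition \ref{prop:infinite_compositions_Z}), and the purely-positive higher-arity corrections require their own convergence argument --- one does exist, since modulo $F^i\cO$ a function of fixed total degree $d$ has positive degree bounded by $d+i-1$, making such arity-raising vector fields nilpotent in each truncation, but your proposal supplies none of this. The paper avoids the perturbative scheme altogether: its proof of Lemma \ref{lem:Hadamard} simply invokes the general form of coordinate changes on graded manifolds from \cite{AKVS}, i.e.\ one constructs flow-box coordinates directly by solving $\cv[Y]=1$, $\cv[X_i]=0$, $\cv[H_j]=0$ as ODEs in $y$, order by order in the fiber variables, with convergence in the filtration topology.
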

\begin{proof}
The proof is rather straightforward: use the general form of the coordinate changes on graded manifolds (cf. \cite{AKVS}). 
$\blacksquare$
\end{proof}

The following lemma is the result of an obvious computation.

\begin{lemma}
\label{lem:noY}
Every vector field $Q$, defined on a coordinate neighborhood $(y,x_\bullet,\eta_\bullet)$, that satisfies 
$\left[Q,\frac{\partial}{\partial y}\right]=0   $  is of the form:
 $$ Q = \tau(x_\bullet,\eta_\bullet) \frac{\partial}{\partial y}
 + R\left(x_\bullet,\eta_\bullet,  \frac{\partial}{\partial x_\bullet},  \frac{\partial}{\partial \eta_\bullet} \right). 
$$
\end{lemma}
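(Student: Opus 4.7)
The plan is to carry out a direct computation in the given chart; the paper already warns that the statement is obvious. First I would write an arbitrary vector field $Q$ in the most general local form
\begin{equation*}
Q \;=\; A(y,x_\bullet,\eta_\bullet)\,\frac{\partial}{\partial y} \;+\; \sum_{i\in I} B_i(y,x_\bullet,\eta_\bullet)\,\frac{\partial}{\partial x_i} \;+\; \sum_{j\in J} C_j(y,x_\bullet,\eta_\bullet)\,\frac{\partial}{\partial \eta_j},
\end{equation*}
imposing that the coefficients have the total degrees needed to make $Q$ of total degree $+1$. The key simplification is that $y$ has degree $0$, hence $\frac{\partial}{\partial y}$ has degree $0$, so the graded commutator degenerates to the ordinary one: $[Q,\frac{\partial}{\partial y}] = Q\circ \frac{\partial}{\partial y} - \frac{\partial}{\partial y}\circ Q$.

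Next I would evaluate the derivation $[Q,\frac{\partial}{\partial y}]$ on each coordinate function. Since $\frac{\partial}{\partial y}$ sends $y\mapsto 1$, $x_i\mapsto 0$, $\eta_j\mapsto 0$, and $Q$ annihilates constants, one reads off
\begin{equation*}
\Bigl[Q,\tfrac{\partial}{\partial y}\Bigr](y)=-\tfrac{\partial A}{\partial y},\qquad \Bigl[Q,\tfrac{\partial}{\partial y}\Bigr](x_i)=-\tfrac{\partial B_i}{\partial y},\qquad \Bigl[Q,\tfrac{\partial}{\partial y}\Bigr](\eta_j)=-\tfrac{\partial C_j}{\partial y}.
\end{equation*}
The hypothesis $[Q,\frac{\partial}{\partial y}]=0$ forces each partial $\partial_y A,\partial_y B_i,\partial_y C_j$ to vanish, so none of the coefficients depend on $y$.

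Setting $\tau(x_\bullet,\eta_\bullet):=A(x_\bullet,\eta_\bullet)$ and defining $R(x_\bullet,\eta_\bullet,\frac{\partial}{\partial x_\bullet},\frac{\partial}{\partial \eta_\bullet})$ to be the sum of the remaining terms $\sum_i B_i(x_\bullet,\eta_\bullet)\frac{\partial}{\partial x_i}+\sum_j C_j(x_\bullet,\eta_\bullet)\frac{\partial}{\partial \eta_j}$ yields exactly the announced decomposition. There is essentially no obstacle to overcome: the only mildly subtle point is to remember that one is working in a graded setting, which is handled once and for all by observing that $\frac{\partial}{\partial y}$ is even of degree $0$, so the graded Jacobi/sign rules play no role.
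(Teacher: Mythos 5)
Your computation is correct and is precisely the ``obvious computation'' the paper alludes to without writing out: the paper offers no proof beyond that remark, and your direct evaluation of $[Q,\partial/\partial y]$ on the coordinate functions, forcing all coefficients to be $y$-independent, is the intended argument. The only cosmetic point is that the lemma does not actually assume $Q$ has degree $+1$, so your degree bookkeeping, while harmless, is not needed.
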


We can now prove the following statement:

\begin{prop}
\label{prop:anchor_non_zero}
Let $ (M_0,\cO,Q)$ be a $Q$-manifold equipped with a splitting. Let $\rho \colon E_{-1} \to TM_0 $ be the corresponding anchor map.
Every point $m \in M_0$ in the zero locus of the curvature $\kappa$ such that $\rho_m : E_{-1} \to TM_0 $ is not the zero map admits a coordinate neighborhood with variables $((y,x_\bullet), (\theta, \eta_\bullet)) $ on which $Q$ reads:
 $$ Q =  {\theta}\frac{\partial}{\partial y} + R\left(x,\eta, \frac{\partial}{\partial x_\bullet}, \frac{\partial}{\partial \eta_\bullet}\right)  .$$
\end{prop}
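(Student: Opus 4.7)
The plan is to build a degree $0$ vector field $\cv$ on $\cO$ from $Q$ and a well-chosen interior product, straighten $\cv$ via Lemma~\ref{lem:Hadamard}, and then read off the normal form using Lemma~\ref{lem:noY} together with one further coordinate change along $E_{-1}^*$.

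First I would pick a local section $e\in\Gamma(E_{-1})$ with $\rho(e)|_m\neq 0$ (which exists since $\rho_m$ is nonzero), extend it to a local frame $e=e_1,e_2,\dots,e_r$ of $E_{-1}$, and introduce the dual frame $\theta_1,\dots,\theta_r$ of $E_{-1}^*$ (viewed as degree $+1$ coordinate functions). Set $\cv:=[\mathfrak i_e,Q]$. This is a degree $0$ derivation of $\cO$ because $\mathfrak i_e$ has degree $-1$, and the graded Jacobi identity together with $[Q,Q]=0$ yields $[Q,\cv]=0$. To identify the induced vector field on $M_0$, note that for $f\in C^\infty(M_0)$ one has $\cv(f)=\mathfrak i_e(Q(f))$, whose polynomial-degree zero component is $\langle Q(f)^{(1)},e\rangle=\rho(e)[f]$; hence $\underline{\cv}=\rho(e)$, which is nonzero at $m$.

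Lemma~\ref{lem:Hadamard} then produces coordinates $(y,x_\bullet,\eta_\bullet)$ (with the $\theta_k$'s sitting among the $\eta_\bullet$'s) in which $\cv=\pt/\pt y$. Combined with $[Q,\cv]=0$, Lemma~\ref{lem:noY} gives $Q=\tau(x,\eta)\,\pt/\pt y+R(x,\eta,\pt/\pt x_\bullet,\pt/\pt\eta_\bullet)$ with $\tau=Q(y)$. Because $e=e_1$ and the $\theta_j$'s form its dual frame, the derivation $\mathfrak i_e$ on $\cO$ coincides with the coordinate derivation $\pt/\pt\theta_1$, so the identity $\cv(y)=1$ reads $\pt\tau/\pt\theta_1=1$ identically. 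Consequently $\tau=\theta_1+g$ for some $g$ independent of $\theta_1$ and $y$.

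I would finish by performing the coordinate change $\theta:=\tau=\theta_1+g$, keeping all other coordinates fixed: it is invertible since $\pt\theta/\pt\theta_1=1$, and under it $\mathfrak i_e$ transforms into $\pt/\pt\theta$ (the Jacobian for $\pt/\pt\theta_1$ is trivial because $g$ does not involve $\theta_1$). The two identities $[\pt/\pt y,Q]=0$ and $[\pt/\pt\theta,Q]=\pt/\pt y$ then force $Q(y)=\theta$ and $Q(z_\alpha)$ independent of both $y$ and $\theta$ for every remaining coordinate $z_\alpha$; this yields exactly $Q=\theta\,\pt/\pt y+R(x,\eta,\pt/\pt x_\bullet,\pt/\pt\eta_\bullet)$ after relabelling $\theta_2,\dots,\theta_r$ together with the remaining non-zero-degree coordinates as the single family $\eta_\bullet$. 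The main point to watch is the invariance of $\mathfrak i_e$ under the last coordinate change; once this is observed, the normal form is immediate from the two commutator identities above.
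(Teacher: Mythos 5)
Your first half reproduces the paper's argument exactly: choose $e\in\Gamma(E_{-1})$ with $\rho(e)_m\neq 0$, set $\cv=[\mathfrak i_e,Q]$, identify $\underline{\cv}=\rho(e)$, straighten $\cv$ to $\partial/\partial y$ by Lemma~\ref{lem:Hadamard}, and apply Lemma~\ref{lem:noY}. The genuine gap is the assertion that $\mathfrak i_e$ coincides with the coordinate derivation $\partial/\partial\theta_1$ in the straightened coordinates, so that $\cv(y)=1$ becomes $\partial\tau/\partial\theta_1=1$ identically and, after the change $\theta:=\tau$, one gets $[\partial/\partial\theta,Q]=\partial/\partial y$. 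The coordinates produced by Lemma~\ref{lem:Hadamard} come from a change of splitting: the new degree-$0$ coordinate $y$ is in general not a pullback from $M_0$ in the original splitting, so $\mathfrak i_e(y)\neq 0$, and the new nonzero-degree coordinates need not contain your original dual frame $\theta_1,\dots,\theta_r$ at all. The correct computation (the one the paper makes) is $\mathfrak i_e\tau=[\mathfrak i_e,Q](y)+Q(\mathfrak i_e[y])=1+Q(\mathfrak i_e[y])$, and one only controls the degree-$0$ correction $Q(\mathfrak i_e[y])$ modulo the zero-locus ideal; this suffices to make $\theta:=\tau$ an admissible coordinate near $m$, but not to conclude $\mathfrak i_e=\partial/\partial\theta$. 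Without that identification your linchpin identity $[\partial/\partial\theta,Q]=\partial/\partial y$ is simply not available: at this stage of the argument $Q=\theta\left(\tfrac{\partial}{\partial y}+B\right)+A$ with $B$ a priori nonzero, so $[\partial/\partial\theta,Q]=\tfrac{\partial}{\partial y}+B$, and the $\theta$- and $y$-dependence of the remaining coefficients does not yet drop out.

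The paper closes this by successive explicit coordinate changes rather than by a commutator identity: it uses $Q(\theta)=Q^2(y)=0$ to kill the $\partial/\partial\theta$-component, the relation $\theta^2=0$ to split off the $\theta B$ term, a second straightening $\tfrac{\partial}{\partial y}+B=\tfrac{\partial}{\partial y'}$, then $Q^2=0$ again to remove the residual $y'$-dependence, and a final shift of $\theta$. To salvage your shortcut you would need to show that $\cv$ and $\mathfrak i_e$ (which do graded-commute) can be \emph{simultaneously} normalized to $\partial/\partial y$ and $\partial/\partial\theta$; that is an additional statement not provided by Lemma~\ref{lem:Hadamard}. Note also that even granting your two commutator identities, they do not exclude a term $S(x,\eta)\,\partial/\partial\theta$ in $Q$ (they only force $S$ to be independent of $\theta$ and $y$); ruling it out requires the extra observation $Q(\theta)=Q(Q(y))=0$.
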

\begin{proof}
The map $\rho : E_{-1} \to TM_0 $ is different from zero at $m \in M_0$ if and only if there exists a section $e$ in $\Gamma(E_{-1}) $ such that the degree $0$ vector field $ \cv := [ Q, \mathfrak i_e] $ (which is of degree $0$) has a basic vector field
 $\underline{\cv} $ (see Remark \ref{rmk:basicRmk}) different from $0$ at $m$. By Lemma \ref{lem:Hadamard}, there exists a coordinate neighborhood $(y,x_\bullet, \eta_\bullet)$ on which $ \cv = [ Q, \mathfrak i_e] = \frac{\partial}{\partial y} $. Since $[\cv , Q]=0$, 
 Lemma \ref{lem:noY} implies that 
 in these coordinates:
$$ Q=   \tau(x,\eta) \frac{\partial}{\partial y}
 + R\left(x,\eta,  \frac{\partial}{\partial x_\bullet},  \frac{\partial}{\partial \eta_\bullet} \right).$$
 Now, $\tau(x,\eta)= Q(y) $ is a degree $ +1$ function whose component in $\Gamma((E_{-1})^*) $  cannot be zero in view of
$$
\mathfrak i_e \tau(x,\eta) = \mathfrak i_e Q(y)
=[\mathfrak i_e , Q] (y) +   Q (\mathfrak i_e[y]) 
= \frac{\partial}{\partial y} (y) + Q (\mathfrak i_e[y]) 
=1 + Q (\mathfrak i_e[y]) 
$$
and the fact that the projection of the degree $0$ function $Q (\mathfrak i_e[y])  $ on $C^\infty(M_0)$ has to be an element of the zero locus ideal for degree reasons.
We can therefore replace one of the degree $-1 $ variables in the coordinates $\eta_\bullet $ by $\tau(x,\eta) $: we denote by $\theta $ this new variable. Since $ \theta=\tau(x,\eta)$ does not depend on the variable $y$, this change of coordinates does not affect $\theta \frac{\partial}{\partial y} $ and changes $R$ in a vector field that again does not depend on $y$ nor contains $\frac{\partial}{\partial y} $. But it may contain a component in $\frac{\partial}{\partial \theta} $. In conclusion:
$$ Q=   \theta \frac{\partial}{\partial y} + \tilde{R}\left(x,\eta, \theta,  \frac{\partial}{\partial x_\bullet},  \frac{\partial}{\partial \eta_\bullet} \right) + S(x_\bullet, \eta)  \frac{\partial}{\partial \theta}.$$
Since $Q^2(y)=Q(\theta)=0 $, we have 
$S(x_\bullet, \eta_\bullet) =0$ and therefore:
$$ Q=   \theta \frac{\partial}{\partial y} + \tilde{R}\left(x,\eta, \theta,  \frac{\partial}{\partial x_\bullet},  \frac{\partial}{\partial \eta_\bullet} \right)$$
Since $\theta^2=0 $, we have:
 $$ \tilde{R}\left(x,\eta, \theta,  \frac{\partial}{\partial x_\bullet},  \frac{\partial}{\partial \eta_\bullet} \right)=A\left(x,\eta,   \frac{\partial}{\partial x_\bullet},  \frac{\partial}{\partial \eta_\bullet} \right) + \theta B\left(x,\eta,   \frac{\partial}{\partial x_\bullet},  \frac{\partial}{\partial \eta_\bullet} \right)$$
 so that 
 $$ Q = \theta \left( \frac{\partial}{\partial y} + 
 B\left(x,\eta,   \frac{\partial}{\partial x_\bullet},  \frac{\partial}{\partial \eta_\bullet}
 \right)\right)
 + A\left(x,\eta,   \frac{\partial}{\partial x_\bullet},  \frac{\partial}{\partial \eta_\bullet} \right)  $$
 There exists local coordinates $(y',x_\bullet', \eta') $ leaving $\theta $ untouched, where 
 $$ \frac{\partial}{\partial y} +
 B\left(x,\eta,   \frac{\partial}{\partial x_\bullet},  \frac{\partial}{\partial \eta_\bullet}\right)= \frac{\partial}{\partial y'}. $$
 We have in these coordinates:
  $$ Q= \theta \frac{\partial}{\partial y'} + A'\left(x',\eta',y',   \frac{\partial}{\partial x_\bullet'},  \frac{\partial}{\partial \eta_\bullet'},  \frac{\partial}{\partial y'} \right).$$
  Since $Q^2=0$, $A'$ does not depend on $y'$, and: 
  $$ Q= \theta \frac{\partial}{\partial y'} + A''\left(x',\eta', \frac{\partial}{\partial x_\bullet'},  \frac{\partial}{\partial \eta_\bullet'},  \right)+ T(x',\eta')\frac{\partial}{\partial y'} .$$
  We now replace $\theta $ by $\theta' = \theta +  T(x',\eta') $. Since $ (\theta+T(x',\eta')) = Q(y')= \theta'$, we have \\
  $A''\left(x',\eta', \frac{\partial}{\partial x_\bullet'},  \frac{\partial}{\partial \eta_\bullet'}, \frac{\partial}{\partial \theta_\bullet'} \right)\theta'=0$, so that $A''$  has no component in $\frac{\partial}{\partial \theta_\bullet'}$  and the vector field $Q$ has the desired form in these coordinates.
  This completes the proof. $\blacksquare$
\end{proof}

\begin{proof}[Proof of Theorem \ref{thm:localAnchor}]
The theorem is now an immediate consequence of Proposition \ref{prop:anchor_non_zero}, upon making a finite recursion until the corresponding anchor map vanishes. $\blacksquare$
\end{proof}

\subsection{Examples and non-examples}

\begin{example}
\normalfont
 Theorem \ref{thm:localAnchor}, when applied to a Lie algebroids, gives back a classical result \cite{MR1881647}, which itself is similar to Weinstein splitting theorem for Poisson manifolds \cite{Weinstein}.
For Lie $\infty$-algebroids, Theorem \ref{thm:localAnchor} gives back a similar statement in \cite{MR4000576}.   
\end{example}

 \begin{example} \normalfont
 For a Koszul-Tate resolution, 
 Theorem \ref{thm:localAnchor} does not give any interesting result, since the anchor is zero at every point of the zero locus.
 \end{example}

 \begin{example} \normalfont
 For a positively graded $Q$-manifold over a manifold $M_0$, the image of the anchor map 
  $$ \rho \colon \Gamma(E_{-1}) \longrightarrow \fX (M_0)  $$
 is a singular foliation in the sense of \cite{AndSkand}, 
 i.e. a locally finitely generated $C^\infty(M_0)$-sub-module of $\fX(M_0) $ closed under Lie bracket.  For $\mathbb Z^*$-graded $Q$-manifold with splitting, whose dual Lie $\infty $-algebroid with anchor maps $(\rho_n)_{n \geq 1} $, it is natural to ask if
  $$ \bigoplus_{n \geq 1} \rho_n(\Gamma(S^n \oplus_{i \in \mathbb Z} E_i)_{-1}) $$
 is still a singular foliation.
 The answer is \emph{no}: it is certainly a $C^\infty(M_0)$-sub-module of $\fX(M_0)$, but, even when it is locally finitely generated, it may not be stable under Lie bracket.
 Here is a class of counter-examples: Let $M_0$ be a manifold, $X_1 , X_2$ vector fields such that $ [X_1,X_2]$ is not in the $C^\infty$-module generated by $X_1,X_2$, let $\theta_1,\theta_2, \eta$ be additional variables of respective degrees $2$, $2$ and $-1 $, and consider
  $$ Q= \eta \theta_1 X_1 + \eta \theta_2 X_2 .$$
  It is straightforward to check that $Q$ is a degree $+1 $ vector field squaring to zero. The $2$-ary anchor map is not zero and its image is the $\mathcal C^\infty(M_0) $ module generated by $X_1,X_2$, which generated a $C^\infty(M_0)$-module; by assumption it is not stable under Lie bracket.
 \end{example}
 
 \begin{example} \normalfont
  Here is an example of a $Q$-manifold with a splitting,  whose $2$-ary anchor is not valued in vector fields tangent to the zero locus: $$ Q = (x - \epsilon \zeta) \frac{\partial}{ \partial \eta} + \zeta \xi \frac{\partial}{\partial x}+  \xi \frac{\partial}{ \partial \epsilon},    $$ 
 where $x$ is a  degree $0$ variables and $\eta, \zeta,\xi,\epsilon $ are variables of respective degrees $-1,3,-2,-3$.
 \end{example}

\section*{Conclusion / perspectives}

As mentioned in the introduction, the results of \cite{AKVS} on the $\Z$-graded manifolds and the technique of filtrations of functional spaces open a way to understanding the form of various geometric and algebraic structures on them. This permitted for example, to extend the results of \cite{DGLG} to the honest $\Z$-graded case and develop them in \cite{AKVS2}. In the current paper we have added an important ingredient to the picture -- a $Q$-structure -- describing thus the normal form of differential $\Z^*$-graded manifolds. 
Our common thread is that "for a $\mathbb Z^*$-graded $Q$-manifold, only the zero locus of the curvature matters": 
Proposition \ref{prop:ifnozerolocus} 
should be understood as meaning that outside the zero locus of their curvatures, $\mathbb  Z^*$-graded $Q$-manifolds have a very trivial structure; then
Theorem \ref{theo:existenceUnicity} makes more precise this general idea, by stating that positive part of a $Q$-manifold over its zero locus is the only piece that matters when its negative part is a Koszul-Tate resolution; and last, Theorem \ref{thm:localAnchor} adds an other layer to the same general idea, by stating that, at a point in the zero locus,  the anchor map and its transverse $Q$-manifold are the only two non-trivial pieces of information.

On top of the pure mathematical significance of the above results we expect them to have straightforward consequences for gauge theories. According to \cite{melchior}, under rather natural assumptions one can read-off a $Q$-structure from the equations governing the theory. This language is also widely used for various quantization problems. Then, as explained in \cite{KS}, a lot of information can be encoded in the language of mappings between $Q$-manifolds: the equations of motion (i.e. extrema of the functional describing the model) correspond to $Q$-morphisms, and gauge transformations (symmetries) to $Q$-homotopies. In this setting reducing a $Q$-structure to a (simple) canonical form by a homotopy would mean gauge fixing in an intelligent way.

\appendix 

\section{Projective systems of algebras}
\label{sec:projectiveLimits}

We call \emph{projective system of algebras}
a pair made of a sequence $(A^{i})_{i \in \N}$ of  algebras, and a family of algebra morphisms $\pi^{[i \to j]} \colon A^{i} \to A^{j} $, defined for all integers $i \geq j $, subject to the two following conditions: $\pi^{[i\to i]} ={\mathrm{id}}_{A^{i}} $ and  
 $$ \pi^{[j\to k]} \circ \pi^{[i \to j]}  = \pi^{[i \to k]}, \; \; \forall i \geq j \geq k. $$

A \emph{endomorphism of projective algebras} is a family $(\phi^{[i]})_{i \in \N} $ of algebra endomorphisms
 $ \phi^{[i]}: A^{i} \to A^{i}$, defined for all  $i \in \N$   such that $\phi^{[j]} \circ \pi^{[i\to j]}=  \pi^{[i\to j]} \circ  \phi^{[i]}$ for all $i \geq j $.
The following diagram recapitulates the above commutativity properties for all $i \geq j \geq k$:
 $$
\xymatrix{
 A^{i}  \ar[rrd]|-{\pi^{[i\to j]}}\ar[rrrr]|-(.7){\pi^{[i\to k]}} &&  &&  A^{k} \\
 &&A^{j}\ar[rru]|-{\pi^{[j \to k]}} &&\\
A^{i}\ar[rrrr]|(0.5)\hole|-(.7){\pi^{[i\to k]}} \ar[uu]|-(.6){\phi^{[i]}} \ar[rrd]|-{\pi^{[i\to j]}}   &&    && A^{k} \ar[uu]|-(.6){\phi^{[k]}}\\
&&A^{j} \ar[uu]|-(.7){\phi^{[j]}} \ar[rru]|-{\pi^{[j\to k]}}&& }
$$

We define the \emph{projective limit} $A^{\infty} $ of a projective system of algebras to be the algebra of collections $i \mapsto a^{i} \in A^{i} $ such that $\pi^{[i\to j]}(a^{i})=a^{j} $ for all $i\geq j $. By assigning to such a collection its $i$-th component, one defines, for all $i \in \N$, algebra morphisms $\pi^{[\infty\to i]}\colon A^{\infty} \to  A^{[i]}$ that satisfy:
$$ \pi^{[j\to k]} \circ \pi^{[\infty \to j]}  = \pi^{[\infty \to k]}, \; \; \forall  j \geq k. $$

For any morphism of projective algebras $(\phi^{[i]})_{i \in \N} $, there exists a unique algebra endomorphism $\phi^{[\infty]} \colon A^{\infty} \to A^{\infty}  $ such that $\phi^{[i]} \circ \pi^{[\infty \to i]} = \pi^{[\infty \to i]} \circ \phi^{[\infty]} $. 

 $$
\xymatrix{
 A^{\infty}  \ar[rrr]|-{\pi^{[\infty\to i]}} &&& A^{i}   \\
A^{\infty}  \ar[u]^{\phi^{[\infty]}} \ar[rrr]|-{\pi^{[\infty\to i]}}   &&& A^{i} \ar[u]^{\phi^{[i]}} 
}
$$ 
 We call $\phi^{[\infty]}\colon A^{\infty} \to  A^{\infty} $ the \emph{projective limit of $(\phi^{[i]})_{i \in \N} $}.
 
 \begin{prop}
 \label{prop:infinite_compositions}
Let $\left(A^{i},\pi^{[i\to j]}\right)$ be a projective system of algebras.
 For any family $(\phi_N)_{N \in \N} $ of endomorphisms of the latter such that $ \phi_{N}^{[i]} = {\mathrm{id}}_{A^{i}} $ for all $N \geq i $, the sequence of algebra endomorphisms defined for all $i \in \N $ by
 $$\begin{array}{rrcl} \psi^{[i]} : & A^i& \to & A^i\\ &a &\mapsto &  \cdots \circ \phi_{3}^{[i]}\circ \phi_{2}^{[i]} \circ  \phi_{1}^{[i]}(a) \\ & & & = \phi_{i}^{[i]} \circ \dots \circ  \phi_{1}^{[i]}(a)  \; \hbox{ (by assumption)}\end{array}$$
 is an endomorphism of projective systems of algebras.
\end{prop}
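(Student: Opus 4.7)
The first task is to check that each $\psi^{[i]}$ is a well-defined algebra endomorphism of $A^{i}$. The assumption that $\phi_N^{[i]} = \mathrm{id}_{A^{i}}$ for all $N \geq i$ guarantees that in the formal infinite composition $\cdots \circ \phi_3^{[i]} \circ \phi_2^{[i]} \circ \phi_1^{[i]}$, every factor with index $\geq i$ is the identity. Hence for every $a \in A^{i}$ only finitely many factors act non-trivially, and the expression reduces to the honest finite composition $\phi_i^{[i]} \circ \cdots \circ \phi_1^{[i]}$ of algebra endomorphisms of $A^{i}$. Being a finite composition of algebra morphisms, $\psi^{[i]}$ is itself an algebra morphism.

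The second task is to verify the compatibility condition: for all $i \geq j$, we must check that
\[
\pi^{[i\to j]} \circ \psi^{[i]} = \psi^{[j]} \circ \pi^{[i\to j]}.
\]
The plan is to push $\pi^{[i\to j]}$ from the left to the right across $\psi^{[i]} = \phi_i^{[i]} \circ \cdots \circ \phi_1^{[i]}$, one factor at a time. By hypothesis, each $\phi_N$ is a morphism of projective systems, so $\pi^{[i\to j]} \circ \phi_N^{[i]} = \phi_N^{[j]} \circ \pi^{[i\to j]}$ for every $N$. Iterating this identity $i$ times yields
\[
\pi^{[i\to j]} \circ \psi^{[i]} = \phi_i^{[j]} \circ \phi_{i-1}^{[j]} \circ \cdots \circ \phi_1^{[j]} \circ \pi^{[i\to j]}.
\]

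Finally, I need to identify the right-hand side with $\psi^{[j]} \circ \pi^{[i\to j]}$. By the assumption, $\phi_N^{[j]} = \mathrm{id}_{A^{j}}$ whenever $N \geq j$, so among the factors $\phi_i^{[j]}, \phi_{i-1}^{[j]}, \ldots, \phi_{j}^{[j]}$ each one is the identity on $A^{j}$; only the factors $\phi_{j-1}^{[j]}, \ldots, \phi_1^{[j]}$ contribute. The remaining composition $\phi_{j-1}^{[j]} \circ \cdots \circ \phi_1^{[j]}$ is exactly $\psi^{[j]}$ (again because the higher-index factors in its infinite expression are identities). This produces the required equality and completes the proof.

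\textbf{Expected difficulty.} There is no conceptual obstacle; the entire content is a bookkeeping argument combining the stabilization hypothesis with the morphism-of-projective-systems property of each $\phi_N$. The only minor point of vigilance is managing the off-by-one behavior at the truncation index $j$, i.e.\ checking that the ``extra'' factors $\phi_j^{[j]}, \ldots, \phi_i^{[j]}$ that appear on the right-hand side after moving $\pi^{[i\to j]}$ past all of $\psi^{[i]}$ are indeed all equal to the identity, so that they can be discarded without changing the composition.
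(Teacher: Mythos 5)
Your argument is correct and complete: the stabilization hypothesis $\phi_N^{[i]}=\mathrm{id}_{A^{i}}$ for $N\geq i$ makes each $\psi^{[i]}$ a genuine finite composition of algebra morphisms, and commuting $\pi^{[i\to j]}$ past each factor via the defining relation $\pi^{[i\to j]}\circ\phi_N^{[i]}=\phi_N^{[j]}\circ\pi^{[i\to j]}$, then discarding the identity factors $\phi_j^{[j]},\dots,\phi_i^{[j]}$, is exactly the required verification. The paper states this proposition without proof, treating it as routine bookkeeping, and your write-up supplies precisely that bookkeeping (including the off-by-one point at the truncation index), so there is nothing to contrast.
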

 The projective limit $ \psi^{[\infty]} \colon A^{\infty} \to A^{\infty}$ must be understood as the infinite composition of all the $(\phi_i)_{i \in \N} $, it will therefore be denoted by 
 $\bigcirc_{i\uparrow \in \N} \, \phi_i$ or $\prod_{i\uparrow \in \N} \phi_i$, where by ``$i \uparrow \in \N$'' we mean the ordered index $i$. \\

\textbf{Acknowledgments.} We are thankful to the ``Research in Paris'' program of the Institut Henri Poincaré, which hosted us in the beginning of our work on this paper. The work was also partially supported by the CNRS MITI Project ``GraNum'' and PHC Procope ``GraNum 2.0''. A.K.  also appreciates the support of the Faculty of Science of the University of Hradec Králové.

\bibliography{BibGraded}

\end{document}